\documentclass[12pt,a4paper]{article}
\usepackage[utf8]{inputenc}
\usepackage{amsmath}
\usepackage{amsfonts}
\usepackage{amssymb}
\author{Mark Shusterman}
\title{Free subgroups of finitely \\generated free profinite groups}
\usepackage{mathtools}
\usepackage{graphicx}
\usepackage{hyperref}
\usepackage{tikz}
\usetikzlibrary{matrix,arrows,decorations.pathmorphing}
\newtheorem{theorem}{Theorem}[section]
\newtheorem{lemma}[theorem]{Lemma}
\newtheorem{proposition}[theorem]{Proposition}
\newtheorem{corollary}[theorem]{Corollary}
\usepackage{hyperref}
\usepackage[super]{nth}

\newenvironment{proof}[1][Proof]{\begin{trivlist}
\item[\hskip \labelsep {\bfseries #1}]}{\end{trivlist}}
\newenvironment{definition}[1][Definition]{\begin{trivlist}
\item[\hskip \labelsep {\bfseries #1}]}{\end{trivlist}}

\newcommand{\lemref}[1]{\hyperref[#1]{Lemma \ref*{#1}}}
\newcommand{\thmref}[1]{\hyperref[#1]{Theorem \ref*{#1}}}
\newcommand{\propref}[1]{\hyperref[#1]{Proposition \ref*{#1}}}
\newcommand{\corref}[1]{\hyperref[#1]{Corollary \ref*{#1}}}
\newcommand{\defref}[1]{\hyperref[#1]{Definition \ref*{#1}}}
\newcommand{\rank}{\text{rank}}

\begin{document}

\maketitle

\abstract{We give new and improved results on the freeness of subgroups of free profinite groups: A subgroup containing the normal closure of a finite word in the elements of a basis is free; Every infinite index subgroup of a finitely generated nonabelian free profinite group is contained in an infinitely generated free profinite subgroup. These results are combined with the twisted wreath product approach of Haran, an observation on the action of compact groups, and a rank counting argument to prove a conjecture of Bary-Soroker, Fehm, and Wiese, thus providing a quite general sufficient condition for subgroups to be free profinite. As a result of our work, we are able to address a conjecture of Jarden on the Hilbertianity of fields generated by torsion points of abelian varieties.}

\section{Introduction}

One of the most celebrated theorems of classical group theory is the theorem of Nielsen and Schreier stating that every subgroup of a free group is free. This theorem has attracted much attention, and proofs from rather diverse mathematical disciplines like abstract group theory, algebraic topology, geometric group theory, and Bass-Serre theory have been given (see \cite{DHLS}, \cite[Proposition 17.5.6]{FJ}, \mbox{\cite[Chapter 14]{Hig}}, \cite{How}, \cite[pp. 383-387]{Rot}, \cite{RS}, and \cite[Chapter 2]{SW}).

In Field Arithmetic, a central task is to extend the Nielsen-Schreier theorem to the realm of profinite groups. Unfortunately, the $2$-Sylow subgroup of a free profinite group has only $2$-groups as finite images, so it is not a free profinite group. It is therefore apparent that the theorem does not extend to the profinite setting in the strict sense, and much of the work carried out by  Bary-Soroker,  Binz, Chatzidakis, Fehm, Gildenhuys, Haran, Harbater, Iwasawa, Jarden, Lubotzky, Lim, Melnikov, Neukirch, Ribes, Steinberg, V.D. Dries, Wenzel, Wiese, Zalesskii and others, produced sufficient conditions for the permanence of profinite freeness in closed subgroups.

Besides the intrinsic group theoretic interest of exploring a free profinite group by examining the structure of its closed subgroups (these are the projective objects in the category of profinite groups), extensions of the Nielsen-Schreier theorem to profinite groups prove to be of arithmetical importance for two reasons:

\begin{itemize}

\item Free profinite groups arise as absolute Galois groups of fields, so any information about a closed subgroup, tells us something about the absolute Galois group of a field extension. In particular, the profinite freeness of a closed subgroup provides us with a solution of the inverse Galois problem over its fixed field in a rather strong sense.

\item Free profinite groups and absolute Galois groups of Hilbertian fields (see \defref{Hildef}) exhibit many similar properties, and the techniques used to study these object have a lot in common (for example, \cite{Ja80}, \cite{Ja97}, and \cite{JL99}). Most notably, there is an analogy between sufficient conditions for the profinite freeness of a subgroup, and sufficient conditions for Hilbertianity of a field extension.

\end{itemize}

This analogy is called the Twinning Principle of Jarden and Lubotzky (see \cite{JL92} and \cite{Ha}, \cite{BP} for later work in this direction). This principle suggested the validity of several theorems, fruitfully passing from free profinite groups to Hilbertian fields and vice versa (see \cite{BS}, \cite{Ha}, and \cite{Ha99}). It is therefore of no surprise that Bary-Soroker, Fehm, and Wiese conjectured a generalization of \cite[Theorem 5.7]{BFW} (the countably infinite case of \thmref{FSLThm}) once they have established the field theoretic analogue in \cite[Theorem 3.2]{BFW}. In our work we prove this generalization, thus giving a quite general sufficient condition for the profinite freeness of a closed subgroup of a free profinite group.

\begin{theorem} \label{FSLThm}

Let $F$ be a nonabelian free profinite group, and let $N \lhd_c F$ be a closed normal subgroup such that $F/N$ is of finite abelian-simple length. Then any closed subgroup \mbox{$N \leq H \leq_c F$} is a free profinite group.

\end{theorem}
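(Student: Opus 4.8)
The plan is to induct on the abelian-simple length $n$ of $F/N$, feeding in two auxiliary freeness criteria proved beforehand: that a closed subgroup of a free profinite group containing the normal closure of a single finite word in a basis is free, and that a closed subgroup of infinite index in a finitely generated nonabelian free profinite group lies inside an infinitely generated free profinite subgroup. Since a closed subgroup of a free profinite group is projective, $H$ is projective, and by the usual freeness criteria — Iwasawa's for countable rank, together with its analogues for larger rank — it then suffices to solve every finite split embedding problem for $H$ while checking that the solutions exhibit a basis of the correct cardinality. This last step is where a rank count enters: when $N$, and hence $H$, has infinite index one determines $\rank H$ with the help of the two auxiliary criteria, and when the index is finite the rank is given by the Nielsen--Schreier formula, so that in every case the rank of the free group we are after is known in advance.

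For $n = 0$ we have $N = F$ and there is nothing to prove. The case $n = 1$, in which $F/N$ is either abelian or a (not necessarily direct) product of nonabelian finite simple groups, is the genuine base case: the finite-word criterion, after passing to a suitable open subgroup or splitting off a suitable free factor, handles the abelian and the semisimple layers, while Haran's twisted wreath product construction — together with an observation on the action of a compact group, here on a semisimple layer — converts a finite split embedding problem for $H$ into one for $F$, which $F$, being free, solves, with the solution restricting to the one wanted for $H$. For the step $n \ge 2$, fix a series of closed normal subgroups of $F/N$ realizing the length and let $G_1 \lhd_c F$ be the preimage of its bottom term, so that $G_1/N$ has length $\le 1$ and $F/G_1$ has length $\le n-1$; the case $n-1$ applied to the pair $(F, G_1)$ shows that $G_1$ and $HG_1$ are free profinite groups. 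Here $N \ne 1$, because a nonabelian free profinite group has no finite abelian-simple length — the bottom nontrivial term of such a series would be a nontrivial abelian or semisimple closed normal subgroup, and a nonabelian free profinite group, being torsion-free and having its nontrivial closed normal subgroups open or infinitely generated, has none — so $G_1 \ne 1$, and being free and normal it is open or infinitely generated, hence itself a nonabelian free profinite group (unless $G_1 = F$, which returns us to the base case); the case $n = 1$ applied to $(G_1, N)$ then shows that $H \cap G_1$ is free. It remains to reassemble: $H$ is a closed subgroup of the free profinite group $HG_1$, it contains the closed free subgroup $H \cap G_1$, which is normal in $H$, and $H/(H \cap G_1) \cong HG_1/G_1$ is a closed subgroup of $F/G_1$ and so is again abelian or semisimple on each layer; a finite split embedding problem for $H$ is then solved by combining a solution over $H \cap G_1$ with a solution over $H/(H \cap G_1)$ and gluing them through a twisted wreath product over an open subgroup of $H$, worked inside the free group $HG_1$.

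I expect the semisimple layers and the reassembly at arbitrary rank to be the main obstacles. For an abelian layer the free abelianization of $F$ and the finite-word criterion keep the embedding problems transparent, but a product of nonabelian finite simple groups is rigid and full of torsion, so to run the twisted wreath product argument one must control the conjugation action of $H$ on such a layer; this is exactly what the observation on compact group actions supplies — a continuous action of a profinite group on a product of nonabelian finite simple groups permutes the factors and acts by automorphisms in a way already visible in a finite quotient, so a twisted wreath product over an open subgroup can absorb it. The second difficulty, and the genuinely new content beyond the countably infinite case treated by Bary-Soroker, Fehm and Wiese, is that for uncountable rank the embedding-problem bookkeeping and the rank count must be carried out uniformly and locally along a cofinal system of open subgroups; this is where the finite-word criterion and the theorem that infinite-index subgroups sit inside infinitely generated free subgroups are used together, to secure both the correct rank and the solvability of every needed embedding problem. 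Finally, one must avoid arguing by mere containments: a closed subgroup of a free profinite group need not be free, so the induction has to carry genuine freeness data — the free normal subgroup $H \cap G_1$ and explicit solutions to embedding problems — from one stage to the next.
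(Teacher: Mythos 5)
Your proposal assembles the right toolbox (the verbal subgroup theorem, the intermediate subgroup theorem, the compact-action lemma, Haran's twisted wreath products), but the architecture you build with it has a genuine gap at its load-bearing step, the ``reassembly''. In the inductive step you reduce to knowing that $H\cap G_1$ is free and normal in $H$ and that $H/(H\cap G_1)$ embeds in $F/G_1$, and you then claim a finite embedding problem for $H$ can be solved by ``combining a solution over $H\cap G_1$ with a solution over $H/(H\cap G_1)$ and gluing them through a twisted wreath product''. No such gluing mechanism exists: an extension of a group of finite abelian-simple length by a free profinite group need not be free, $H/(H\cap G_1)$ is not free (so there is no ``solution over'' it to glue), and if such a combination principle were available the finite-length hypothesis would be doing no work at all. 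The paper uses twisted wreath products in the opposite direction: Haran's criterion reduces freeness of the subgroup to showing that certain wreath-product embedding problems for a quotient of $F$ have \emph{no} proper solution, and the finite-length hypothesis is exactly what produces the contradiction (via the behaviour of $D_{m+1}$ of a twisted wreath product). That single argument handles all infinite ranks at once, with no layer-by-layer gluing.

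Second, the finitely generated case --- which is where the real difficulty lies, not at uncountable rank as you suggest --- is not actually argued in your proposal. The paper does not induct on length there: it passes to $N=H_F$, looks at the last nontrivial term $D_k(F/N)$, and splits it as an abelian group times a product of nonabelian finite simple groups. The semisimple part is handled not by a twisted wreath product but by the compact-action lemma, which produces a \emph{finite} normal subgroup of $F/N$ not contained in $H/N$ and thereby feeds Jarden's profinite Karrass--Solitar theorem. The abelian part splits into a reduction to the infinite-rank case via the intermediate subgroup theorem when $[F/N:(H/N)A]$ is finite, and otherwise a rank count producing \emph{two} basis elements $v_1,v_2$ of a suitable open subgroup that lie in a normal subgroup with abelian image mod $N$, so that $[v_1,v_2]\in N$ and the verbal subgroup theorem applies. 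Your sentence that ``the finite-word criterion, after passing to a suitable open subgroup or splitting off a suitable free factor, handles the abelian and the semisimple layers'' papers over precisely these two arguments; without the commutator trick and the finite-normal-subgroup trick the finite-rank case does not close.
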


The definition of the class of groups of fixed abelian-simple length appears in \cite[Section 2]{BFW}, and is given in the section devoted to these groups in our work (Section 5, \defref{FSLDef}). The reason to consider this class of groups is twofold: First, this class, being defined by restricting the length of certain normal series, contains several natural classes of profinite groups like the class of abelian groups, and the class of groups with a vanishing $k$-th commutator for some fixed $k \in \mathbb{N}$ (see the abelian case in \cite[Corollary 25.4.8]{FJ} and the generalization of it to the solvable one in \corref{SolveCor}). Second, groups from this class appear as certain images of Galois groups of field extensions formed by such arithmetical operations as adjoining torsion points from abelian varieties (or alternatively, taking fields fixed by kernels of modular Galois representations). For more on this point, see the proof of \corref{AbVarCor} and that of \thmref{AdicRepsThm}.

However straightforward is the analogy between \thmref{FSLThm} and its counterpart in fields (\cite[Theorem 3.2]{BFW}), the proof of \cite[Theorem 3.2]{BFW} does not provide the full arsenal of tools necessary to prove \thmref{FSLThm}. The most challenging case of \thmref{FSLThm} is the finitely generated one. Some of the machinery required for its proof is therefore, combinatorial and is developed in the $\nth{3}$ section, in which we prove the verbal subgroup theorem generalizing a result of Melnikov (he was able to prove the case $H = N$ of \thmref{FreeVerbalSubgroupThm}, see \cite[Theorem 8.9.1]{RZ} or \cite[Proposition 25.8.1]{FJ}) in the case of a finitely generated free group:

\begin{theorem} \label{FreeVerbalSubgroupThm}

Let $F$ be a finitely generated free profinite group, and let $H \leq_c F$ be a closed subgroup of infinite index in $F$. Suppose that there is some closed normal subgroup $N \lhd_c F$ contained in $H$, such that $N$ contains a nontrivial finite word in the elements (and inverses) of some basis for $F$. Then $H$ is a free profinite group of rank $\aleph_0$.

\end{theorem}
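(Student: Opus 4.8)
The plan is to exhibit $H$ as a free profinite group of rank $\aleph_0$ by combining a lower bound on its rank with a freeness criterion. First I would establish that $H$ has rank exactly $\aleph_0$: since $F$ is finitely generated and $H$ has infinite index, the group $H$ is not finitely generated (this follows from the profinite Nielsen–Schreier index formula, or directly from the fact that an open subgroup of a finitely generated profinite group is finitely generated, together with a colimit argument over the open subgroups of $F$ containing $H$), and being a closed subgroup of a countably based group it has rank at most $\aleph_0$; hence $\rank(H) = \aleph_0$. The substantive content is therefore to prove that $H$ is free, for which the natural tool is a characterization of free profinite groups of countably infinite rank — namely, a countably generated projective profinite group $H$ is free of rank $\aleph_0$ if and only if every finite embedding problem for $H$ is properly solvable (the Iwasawa-type criterion; see \cite[Lemma 24.8.1]{FJ} or \cite[Corollary 3.5.10]{RZ}). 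Projectivity of $H$ is immediate since $H$ is a closed subgroup of the projective group $F$.

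So the heart of the argument is showing that $H$ has the embedding problem property. Let $w$ be the nontrivial word in a basis $X$ of $F$ with $w \in N \leq H$, and let $R = \langle w \rangle^F \lhd_c F$ be its normal closure; then $R \leq H$ as well. The key structural input is that $F/R$ is a finitely generated profinite group with a very explicit presentation — it is the quotient of $F$ by a single relation — and one understands quotients of $F$ by $R$ well enough to produce many finite quotients of $H$: indeed, $H/R \leq F/R$ and $F/R$ is, loosely, "one relator", so there is a plentiful supply of finite images. More precisely, I would use $R$ to reduce a finite embedding problem for $H$ to one that can be solved inside open subgroups: given a finite embedding problem $(\varphi\colon H \to A,\ \alpha\colon B \to A)$ with $\varphi$ surjective, one passes to an open subgroup $U$ of $F$ with $H \leq U$ and $R \leq U$ such that the relevant finite data factors through $U$; then one uses that $U$ is finitely generated free (an open subgroup of a finitely generated free profinite group is itself finitely generated free, by the profinite Nielsen–Schreier theorem with the index formula giving its rank), so the embedding problem for $U$ is solvable, and one arranges the solution to be compatible with $H$. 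The role of the word $w$ lying in $N$ — hence the normality of $R$ and the fact that $R \leq H$ — is exactly what allows the descent from $U$ to $H$ to be performed coherently, by controlling how $H$ sits inside each such $U$ relative to $R$.

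The main obstacle, and where the real combinatorial work of the paper's third section is presumably used, is the last step: solving a finite embedding problem for $H$ itself rather than merely for an open overgroup $U$. Surjectivity of the constructed solution onto $B$ (making the solution \emph{proper}) is delicate, because $H$ is not open and one cannot simply intersect with $H$ and preserve surjectivity; one must instead build the solution "from the word up", using the verbal structure to guarantee that enough of $B$ is hit. I would expect the argument to proceed by realizing $B$ as a quotient of a suitable free product or wreath-type construction involving $F/R$ and then pulling back through the inclusion $H \leq F$, checking surjectivity by a counting or covering-space-style argument on finite quotients. A secondary technical point is verifying that the reductions above are uniform enough to handle \emph{all} finite embedding problems simultaneously, i.e. that the criterion's hypotheses hold for an arbitrary embedding problem and not just a cofinal family; this is routine once the single-problem case is in hand, since embedding problems for $H$ form an inverse system and a solution to a dominating problem restricts to solutions of those below it.
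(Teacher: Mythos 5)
Your proposal has the right skeleton --- Iwasawa's criterion reduces everything to properly solving finite embedding problems for $H$ --- but you have flagged and left open the single step that is the entire content of the theorem. Passing to an open $H \leq U \leq_o F$ with a compatible epimorphism and solving the lifted problem for $U$ is easy (any finitely generated free profinite group does it), but the restriction of that solution to $H$ has no reason to be surjective onto the fiber product. Your suggestion that one might ``realize $B$ as a quotient of a suitable free product or wreath-type construction'' and check surjectivity by ``a counting or covering-space-style argument'' is not an argument, and it is not the route the paper takes. A smaller issue: your opening claim that $\rank(H) = \aleph_0$ does not follow merely from $[F:H] = \infty$; a closed procyclic subgroup of a free profinite group of rank $2$ already has infinite index and rank $1$. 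This is harmless here, since the rank comes out of Iwasawa's criterion rather than being needed as an input, but the reasoning as written is false.

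The paper's actual mechanism does not pass through $F/R$ or wreath products. It works with the \emph{abstract} free group $\Phi$ generated by a finite basis $X$ of $F$. The word hypothesis gives $\{1\} \neq N \cap \Phi \lhd \Phi$, and $[F:H]=\infty$ forces $[\Phi : H\cap\Phi]=\infty$; a theorem of Karrass and Solitar then shows that $H\cap\Phi$ has infinite rank as an abstract free group. The heavy lifting is the combinatorial change-of-variables result (\lemref{AbstChangeVarLem} and \corref{ChangeVarCor}): because $\rank(H\cap\Phi)=\infty$, for every $n$ there exist $H \leq V \leq_o F$ and a profinite basis of $V$ containing at least $n$ elements of $H$. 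Given such a basis, the pigeonhole argument of \propref{SimpleSolProp} together with \propref{ContBaseProp} properly solves any finite embedding problem of size at most $n$ for $H$, by sending enough of those basis elements of $V$ lying in $H$ onto a full $\alpha$-fiber, which forces the image of $H$ to contain $\Ker(\alpha)$. Without the Karrass--Solitar input and the change-of-variables lemma (or functional equivalents), your proposal cannot reach the conclusion.
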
     

We feel that the verbal subgroup theorem is a useful tool in establishing the profinite freeness of closed subgroups, which will be utilized and generalized in future work. Furthermore, we prove the intermediate subgroup theorem which resembles another theorem of Melnikov (see \cite[Theorem 8.9.9]{RZ} or \cite[Proposition 25.8.3]{FJ} for his analogue for normal subgroups):

\begin{theorem} \label{FreeInterSubThm}

Let $F$ be a nonabelian finitely generated free profinite group, and let $H \leq_c F$ be a closed subgroup of infinite index in $F$. Then there exists a free profinite subgroup \mbox{$H \leq L \leq_c F$} of rank $\aleph_0$.

\end{theorem}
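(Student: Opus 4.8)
The plan is to reduce to \thmref{FreeVerbalSubgroupThm}: it suffices to exhibit a closed subgroup $H \le L \le_c F$ of infinite index in $F$ such that $L$ contains the normal closure in $F$ of some nontrivial finite word in a basis of $F$; then \thmref{FreeVerbalSubgroupThm}, with that normal closure as the distinguished normal subgroup, shows that $L$ is a free profinite group of rank $\aleph_0$. I would seek $L$ of the form $L = HN$ with $N \lhd_c F$ the normal closure of a single word: since $N$ is normal and $H$, $N$ are compact, $HN$ is a closed subgroup containing $H$ and $N$, and $N$ contains a nontrivial finite word by construction, so the only point to arrange is $[F:L] = \infty$ — that is, that the image of $H$ in $F/N$ has infinite index.

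Set $n = \rank F \ge 2$, fix a basis $x_1,\dots,x_n$ of $F$, and let $\bar H \le F^{\mathrm{ab}} \cong \hat{\mathbb Z}^{\,n}$ be the image of $H$ in the abelianization (a closed subgroup, as $H$ is compact). The easy case is when $\bar H$ has infinite index in $F^{\mathrm{ab}}$. Then $Q := F^{\mathrm{ab}}/\bar H$ is an infinite profinite abelian group and the composite $\psi\colon F \twoheadrightarrow F^{\mathrm{ab}} \twoheadrightarrow Q$ kills $H$; since $n \ge 2$, its kernel $N := \Ker\psi$ contains $\overline{[F,F]}$, hence the nontrivial finite word $[x_1,x_2]$. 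So $L := N$ works: $H \le L$, $[F:L] = |Q| = \infty$, and $L$ is free of rank $\aleph_0$ by \thmref{FreeVerbalSubgroupThm}.

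Now suppose $\bar H$ has finite index in $F^{\mathrm{ab}}$, and suppose moreover that $H$ is finitely generated. Because $[F:H] = \infty$, the indices $[F:V]$ of the open subgroups $V \supseteq H$ are unbounded, so — as $\rank V = 1 + [F:V](n-1)$ — one can choose such a $V$ with $\rank V > \rank H$. Inside the finitely generated nonabelian free group $V$, the image of $H$ in $V^{\mathrm{ab}} \cong \hat{\mathbb Z}^{\,\rank V}$ is generated by fewer than $\rank V$ elements and hence has infinite index, so we are in the easy case for the pair $(V,H)$. The argument of the previous paragraph, applied with $V$ in place of $F$, yields $H \le L \le_c V$ with $[V:L] = \infty$ and $L$ free of rank $\aleph_0$; and then $[F:L] = [F:V]\,[V:L] = \infty$, so this $L$ is the one we want.

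The main obstacle is the remaining case: $\bar H$ of finite index in $F^{\mathrm{ab}}$ while $H$ is not finitely generated — where the abelian quotients, and the rank count above, are of no help. Here one must instead use non-abelian quotients, for instance $F/\langle\langle x_i^{\,k}\rangle\rangle \cong (\mathbb Z/k)\amalg \hat F_{\,n-1}$ (whose kernel contains the finite word $x_i^{\,k}$), and show that for a suitable basis element $x_i$, integer $k$, or change of basis, the image of $H$ in such a quotient has infinite index. I expect this to be the technical core of the proof: ruling out that $H\langle\langle w\rangle\rangle$ is open for every finite word $w$ should require a counting argument that plays the indices $[F : H\langle\langle w\rangle\rangle]$, along a cofinal family of words $w$ with $\bigcap_w H\langle\langle w\rangle\rangle = H$, off against the ranks of the finitely generated free profinite groups onto which $H$ surjects. (If $H$ is normal this case is immediate anyway: for $H \ne 1$ the group $H$ is itself free of rank $\aleph_0$ by Melnikov's theorem, so $L = H$ suffices, and for $H = 1$ one may take $L = \langle\langle x_1\rangle\rangle$.)
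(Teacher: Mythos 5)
Your Cases 1 and 2 are both correct, and the reduction to Theorem~\ref{FreeVerbalSubgroupThm} via the abelianization in the first case, and the rank comparison $\rank V > d(H)$ in the finitely generated case, both work cleanly. But what remains is a genuine gap, not a technical detail: you have not handled the case where $H$ is infinitely generated and its image in $F^{\mathrm{ab}}$ has finite index, and this is really the content of the theorem. Your strategy forces you to prove the following strictly stronger claim: \emph{for every closed $H \leq_c F$ of infinite index there is a nontrivial word $w$ in the abstract free group $\Phi$ with $[F : H\langle\langle w\rangle\rangle_F] = \infty$.} (Any $L$ of infinite index containing both $H$ and the normal closure of some nontrivial $w$ must contain $H\langle\langle w\rangle\rangle_F$, so that group must already have infinite index.) Nothing in the paper establishes this claim, and the suggested counting argument does not obviously get off the ground: the family $\{H\langle\langle w\rangle\rangle\}_w$ has no reason to intersect to $H$, and it is unclear how to play indices against ranks when $H$ is not finitely generated and every natural quotient $F/\langle\langle w\rangle\rangle$ is itself infinitely generated and poorly understood.

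The paper's own proof of Theorem~\ref{FreeInterSubThm} (Theorem~\ref{RouteThm}) does not go through Theorem~\ref{FreeVerbalSubgroupThm} at all and handles all cases uniformly. It builds, by a double induction, an ascending chain $H = H_0 \leq H_1 \leq \cdots$ and a strictly descending chain of open subgroups $F = F_0 \gneq F_1 \gneq \cdots$ with $H_n \leq F_n$ of infinite index, arranged so that by stage $n$ \emph{every} finite embedding problem with $|A| \leq n$ for \emph{every} infinite-index intermediate subgroup $H_n \leq K \leq_c F_n$ is properly solvable. The inductive step is this: if some intermediate $K_0$ and some embedding problem $(\alpha \colon A_0 \to B_0, \theta_0 \colon K_0 \to B_0)$ resist a proper solution, one extends $\theta_0$ to an open $U$, uses Proposition~\ref{GaschProp} to find a basis of $U$ much of which lies in $\Ker(\theta_0)$, and adjoins those basis elements to $H_{n-1}$ to form $H_n$; this enlargement simultaneously keeps the index infinite (Proposition~\ref{SimpleSolProp} shows that otherwise the problematic embedding problem would have been solvable after all) and, via Proposition~\ref{ContBaseProp}, guarantees solvability of all small embedding problems at stage $n$. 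Then $L = \bigcap_n F_n$ contains $H$, has infinite index, and satisfies Iwasawa's criterion, so it is free of rank $\aleph_0$. If you want to pursue your approach, you would need to either prove the word-existence claim above (which, if true, is an interesting strengthening of the theorem and deserves a proof in its own right), or accept that the hard case requires an argument of the paper's flavor that works directly with embedding problems rather than with verbal subgroups.
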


From the very statement of this theorem it is apparent that it aims at creating a link between profinite freeness in the finitely generated case, and profinite freeness in the infinitely generated one. We hope to further generalize this result, and to combine it with the fact that a countably generated free profinite group is an inverse limit of finitely generated ones, to establish a connection between the following three objects using the twinning principle: 

\begin{itemize}

\item The absolute Galois group of a Hilbertian field (an arithmetic object).

\item The free profinite group of rank $\aleph_0$ (a group-theoretic object).

\item A finitely generated free profinite group (a combinatorial object). 

\end{itemize}

It should be noted that making precise the analogy between these objects served as the initial motivation for trying to prove \thmref{FSLThm}, in a hope that the proof of it will contain some of the arguments needed for a "Twinning Theorem" that will shed light on the mechanism behind the twinning principle. An example of such a theorem is the result stating that a countable pseudo algebraically closed field (see \defref{Hildef}) is Hilbertian if and only if its absolute Galois group is free profinite of infinite rank.

In the $\nth{4}$ section, we unveil the last ingredient of the proof of \thmref{FSLThm} - the decomposition of the action of a compact group on a product of nonabelian finite simple groups. In the $\nth{5}$ section, after giving some definitions and basic properties, we prove the infinitely generated case of \thmref{FSLThm}, and handle the finitely generated case by combining a rank counting argument with the tools developed throughout.

In the last section, we mainly give applications of \thmref{FSLThm}. Most notably, we prove \thmref{AdicRepsThm} and use it to deduce the Hilbertianity of fields coming from the Kuykian conjecture (see \cite[Theorem 1.3]{BFW}, \cite{FP}, \cite{FJP}, and \cite{Ja10}):

\begin{theorem} \label{AlmostThm}

Let $k \geq 2$. For almost every $\sigma_1, \dots, \sigma_k \in \mathrm{Gal}(\overline{\mathbb{Q}}/\mathbb{Q})$ with respect to the Haar measure, the fixed field of $\sigma_1, \dots, \sigma_k$ inside $\overline{\mathbb{Q}}$ denoted by $L$, satisfies the following: 

For any abelian variety $A/L$, every infinite extension $K$ of $L$ which is contained in $L(A_{\mathrm{tor}})$, is Hilbertian.

\end{theorem}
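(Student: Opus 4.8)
The plan is to deduce \thmref{AlmostThm} from the group-theoretic statement \thmref{AdicRepsThm} on kernels of adelic representations of a finitely generated free profinite group, using a few classical arithmetic facts about the random field $L$ to set up the reduction. By Jarden's theorems on random fixed fields (see \cite{FJ}), for almost every $(\sigma_1,\dots,\sigma_k)\in\mathrm{Gal}(\overline{\mathbb Q}/\mathbb Q)^{k}$ the field $L=\overline{\mathbb Q}(\sigma_1,\dots,\sigma_k)$ is PAC and $G_L:=\mathrm{Gal}(\overline{\mathbb Q}/L)$ is free profinite of rank $k$; as $k\geq 2$ this $G_L$ is a nonabelian finitely generated free profinite group. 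Fix such an $L$, an abelian variety $A/L$ of dimension $g$, and an intermediate field $L\leq K\leq L(A_{\mathrm{tor}})$ with $[K:L]=\infty$. Since $K$ is an algebraic extension of the PAC field $L$, the field $K$ is itself PAC, and it is countable, being contained in $\overline{\mathbb Q}$. Hence, by the criterion recalled in the introduction — a countable PAC field is Hilbertian precisely when its absolute Galois group is free profinite of infinite rank — it suffices to show that $G_K:=\mathrm{Gal}(\overline{\mathbb Q}/K)$ is free profinite of rank $\aleph_0$.

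To this end I would place $G_K$ inside $G_L$. Adjoining all torsion of $A$ is governed by the adelic representation $\rho_A\colon G_L\to\mathrm{GL}_{2g}(\hat{\mathbb Z})$, and $\mathrm{Gal}(\overline{\mathbb Q}/L(A_{\mathrm{tor}}))=\Ker\rho_A$, so $\Ker\rho_A\leq G_K\leq G_L$ with $[G_L:G_K]=[K:L]=\infty$. This is exactly the configuration handled by \thmref{AdicRepsThm} (and by \corref{AbVarCor} in the abelian-variety case): a closed subgroup of infinite index in a nonabelian finitely generated free profinite group which contains the kernel of a continuous representation into a matrix group over $\hat{\mathbb Z}$ is free profinite of rank $\aleph_0$. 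Applying this with $\rho_A$ gives that $G_K$ is free profinite of rank $\aleph_0$, and the reduction above then completes the proof. (Even knowing only that $G_K$ is free would suffice: the existence of the infinite extension $K$ forces $[L(A_{\mathrm{tor}}):L]=\infty$, so $\Ker\rho_A$ is a nontrivial closed normal subgroup of $G_L$ of infinite index, hence free of rank $\aleph_0$ by Melnikov's theorem (see \cite{RZ}); thus $G_K\supseteq\Ker\rho_A$ has rank $\aleph_0$, a closed subgroup of the second countable group $G_L$ having rank at most $\aleph_0$.)

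The arithmetic inputs — permanence of the PAC property under algebraic extensions, the Hilbertianity criterion for countable PAC fields, and Jarden's measure-theoretic results on random tuples — are classical and I would just cite them (\cite{FJ}). The real content, and the step I expect to be the main obstacle, is \thmref{AdicRepsThm} itself: the group $\mathrm{GL}_{2g}(\hat{\mathbb Z})=\prod_{\ell}\mathrm{GL}_{2g}(\mathbb Z_{\ell})$ is \emph{not} of finite abelian-simple length — its $\ell$-congruence kernels are nonabelian pro-$\ell$ groups of unbounded nilpotency class, and the $\mathrm{SL}$-part is not pro-solvable — so \thmref{FSLThm} does not apply to $\Ker\rho_A$ on the nose. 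One must instead strip off the quotient $\prod_{\ell}\mathrm{GL}_{2g}(\mathbb F_{\ell})$, which \emph{is} of bounded abelian-simple length and whose nonabelian simple layers $\prod_{\ell}\mathrm{PSL}_{2g}(\mathbb F_{\ell})$ fall under the analysis of Section 4 on compact-group actions on products of finite simple groups, and handle it via \thmref{FSLThm}; dispose of the remaining pro-$\ell$ congruence layers by a verbal-subgroup argument along the lines of \thmref{FreeVerbalSubgroupThm}; and glue the resulting free pieces together with \thmref{FreeInterSubThm} so as to land inside an infinitely generated free profinite group. Carrying this out is where the difficulty lies; once \thmref{AdicRepsThm} is in hand, \thmref{AlmostThm} follows by the short argument above.
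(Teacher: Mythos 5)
Your main argument is correct and is essentially the paper's own proof (\corref{AbVarCor}): identify $\mathrm{Gal}(\overline{\mathbb{Q}}/L(A_{\mathrm{tor}}))$ with $\bigcap_{l}\Ker(\rho_l)$ for the Tate-module representations, apply \thmref{AdicRepsThm} to conclude that $G_K$ is free profinite of rank $\aleph_0$, and deduce Hilbertianity from $K$ being countable and PAC (the paper routes this last step through $\omega$-freeness and \cite[Corollary 27.3.3]{FJ} rather than the iff-criterion, which is an immaterial difference). Be aware, though, that your parenthetical fallback misquotes Melnikov: it is \emph{not} true that every nontrivial closed normal subgroup of infinite index in a finitely generated free profinite group is free (Melnikov's theorem is a genuine criterion, not an unconditional freeness statement), but since that aside is not needed for the main line, the proof stands.
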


\section*{Acknowledgments} I would like to sincerely thank Lior Bary-Soroker for suggesting his conjecture as an M.Sc. project, and for all his help as a supervisor. Special thanks go to Moshe Jarden and Dan Haran for their careful reading of the drafts, and various tips. I would also like to thank Yves de Cornulier for helping me with the proof of \lemref{CompactLem}. This research was partially supported by a grant of the Israel Science Foundation with cooperation of UGC no. 40/14.

\section{Background}

Here we survey some basic notation and definitions essential for our work, and prove or cite some basic results.

Our working category will be that of profinite groups. In view of that, group theoretic properties should be understood in the topological sense (unless stated otherwise). For example, subgroups are closed (denoted by $\leq_c$, or $\leq_o$ if the subgroup is also open), homomorphisms are continuous, and so on.

Let $G$ be a profinite group and let $X$ be a subset of G. We say that $X$ \textit{\textbf{generates}} $G$ if the abstract subgroup of $G$ generated by $X$ is dense in $G$, and that $X$ \textbf{\textit{converges to 1}} if every open subgroup $U$ of $G$ contains all but a finite number of the elements of $X$. $G$ is called \textbf{\textit{finitely generated}} if it contains a finite subset $X$ that generates it in the aforementioned topological sense. More generally, we define $d(G)$ to be the smallest cardinality of a set of generators of $G$ converging to $1$. 

By an \textbf{\textit{embedding problem}} for a profinite group $G$ we mean a diagram $$\begin{tikzpicture}[scale=1.5]
\node (B) at (1,1) {$G$};
\node (C) at (0,0) {$A$};
\node (D) at (1,0) {$B$};
\path[->,font=\scriptsize,>=angle 90]
(B) edge node[right]{$\theta$} (D)
(C) edge node[above]{$\alpha$} (D);
\end{tikzpicture}$$ where $\theta, \alpha$ are continuous epimorphisms of profinite groups. The problem is called \textbf{\textit{finite}} if $A$ is a finite group. We say that the problem is \textbf{\textit{solvable}} if there exists a continuous homomorphism $\lambda \colon G \rightarrow A$ making the diagram commutative, and \textbf{\textit{properly solvable}} if $\lambda$ can be taken to be surjective.

Let $F$ be a profinite group generated by $X \subseteq F$ converging to $1$. We say that $F$ is a \textbf{\textit{free profinite group}} if every function $\alpha \colon X \rightarrow L$ to a finite group $L$ with $\alpha(x) \neq 1$ for only finitely many $x \in X$, extends to a continuous group homomrphism $\beta \colon F \rightarrow L$. In this case, $X$ is said to be a \textbf{\textit{profinite basis}} of $F$, and the \textbf{\textit{rank}} of $F$ is defined by $\rank(F) = |X|$. The rank does not depend on the choice of a basis, and is given intrinsically by $d(F)$ (determining $F$ up to isomorphism). Analogously, the rank of an abstract free group is the size of a basis (a free generating set - a set which generates the group and satisfies no relations).

Let $F$ be a free profinite group on a set $X$ (i.e $X$ is a profinite basis). Let $\Phi$ be the abstract subgroup generated by $X$ ($\Phi$ is free by \cite[Corollary 3.3.14]{RZ} and \cite[Proposition 3.3.15]{RZ} applied to the formation of all finite groups). By \cite[Proposition 5.1.3']{Wil}, $F$ is isomorphic to the profinite completion of $\Phi$ with respect to the family of those finite-index normal subgroups (of $\Phi$) that contain all but finitely many elements of $X$ (this allows us to construct the free profinite groups as completions of abstract free groups). Note that if $X$ is finite, (we will be mostly interested in this case) then the free profinite group on $X$ is just the ordinary profinite completion of the abstract free group on $X$. In the following claim, we generalize and simplify the argument given in \cite[Theorem 3.6.2, Case 1]{RZ}.

\begin{proposition} \label{AbstProfBasisProp}

Let $F$ be a free profinite group on a finite set $X$, and let $U \leq_o F$. Denote by $\Phi$ the free abstract subgroup of $F$ generated by $X$, and let $Y$ be a basis for $U \cap \Phi$. Then $Y$ is a profinite basis for $U$.

\end{proposition}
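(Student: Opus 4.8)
The plan is to show that $U$, being a free profinite group (by Nielsen--Schreier for profinite groups, as an open subgroup of a free profinite group), has $Y$ as a profinite basis by verifying the universal mapping property directly: every map from $Y$ into a finite group extends to a continuous homomorphism from $U$. The key point is that $Y$, an abstract basis for the abstract free group $U \cap \Phi$, is finite — indeed $|Y| = 1 + [F:U](|X|-1)$ by the Nielsen--Schreier index formula — so there is no convergence condition to worry about, and it suffices to show that an arbitrary function $\alpha \colon Y \to L$ to a finite group extends to a \emph{continuous} homomorphism $U \to L$.

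First I would take such an $\alpha \colon Y \to L$. Since $U \cap \Phi$ is the abstract free group on $Y$, $\alpha$ extends uniquely to an abstract homomorphism $\bar\alpha \colon U \cap \Phi \to L$. The heart of the matter is to promote $\bar\alpha$ to a continuous homomorphism on all of $U$; equivalently, to show $\bar\alpha$ extends along the inclusion $U \cap \Phi \hookrightarrow U$. For this I would use the completion description of $F$ recalled just before the proposition: $F$ is the completion of $\Phi$ with respect to all finite-index normal subgroups of $\Phi$ (here $X$ is finite, so every finite-index normal subgroup qualifies), and correspondingly $\Phi$ is dense in $F$ with $\Phi \cap U$ dense in $U$. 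So it is enough to find a finite-index normal subgroup $M \lhd \Phi$ with $M \cap U \leq \Ker \bar\alpha$, because then $\bar\alpha$ factors through the finite group $(U\cap\Phi)/(M \cap U)$, which sits inside the finite quotient $\Phi/M$ whose completion-closure is the corresponding open subgroup of $F$; the induced map on that open subgroup of $F$, restricted to $U$, is the desired continuous extension of $\alpha$.

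To produce such an $M$: let $K = \Ker\bar\alpha$, a finite-index (abstract) normal subgroup of $U \cap \Phi$. Since $U \cap \Phi$ has finite index in $\Phi$ (as $U$ is open in $F$ and $\Phi$ is dense, $[\Phi : U \cap \Phi] = [F:U] < \infty$), $K$ has finite index in $\Phi$, and I can take $M$ to be the normal core of $K$ in $\Phi$, which is still of finite index and now normal in $\Phi$, with $M \cap (U \cap \Phi) \leq K$. This $M$ is the finite-index normal subgroup of $\Phi$ we need. Passing to the completion, $\Phi/M \hookrightarrow F$ identifies $\Phi/M$ with an open subgroup's quotient appropriately, and one checks the resulting continuous homomorphism $U \to \Phi/M \to (U\cap\Phi)/K \xrightarrow{\ \sim\ } \bar\alpha(U\cap\Phi) \hookrightarrow L$ agrees with $\alpha$ on $Y$ — this last verification is routine by density of $U \cap \Phi$ in $U$ and continuity.

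**The main obstacle** I anticipate is bookkeeping with the two completions and making precise that the abstract quotient maps glue to continuous maps on the profinite groups — i.e.\ correctly invoking \cite[Proposition 5.1.3']{Wil} so that finite-index subgroups of $\Phi$ correspond to open subgroups of $F$, and that $U$ is itself (by \propref{AbstProfBasisProp}'s own hypothesis that $U$ is free, via Nielsen--Schreier) the completion of $U \cap \Phi$ relative to the induced filtration. Once that correspondence is set up cleanly, the rest is the soft universal-property argument above; the cardinality count $|Y| = 1 + [F:U](\rank F - 1)$ then also pins down $\rank(U)$ as a bonus, matching the classical Nielsen--Schreier formula.
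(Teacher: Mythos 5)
Your proposal is correct and follows essentially the same route as the paper: topological generation comes from the density of $U\cap\Phi$ in $U$, and the universal property is verified by extending the abstract homomorphism $U\cap\Phi\to L$ to a continuous one on $U$ via the completion structure of $F$ over $\Phi$. The paper compresses your normal-core/factorization bookkeeping into a single citation of \cite[Proposition 3.2.2 (d)]{RZ}, passing to the closure of $\Ker(\bar\alpha)$ in $U$, but the underlying argument is the same.
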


\begin{proof}

Set $U_0 = U \cap \Phi$, and note that since $\Phi$ is dense in $F$, $U_0$ is dense in $U$ (that is $\overline{U_0} = U$ because the intersection of a dense set with a nonempty open set is dense). Therefore, $Y$ generates $U$, so in order to show that $Y$ is a profinite basis, we take some mapping to a finite group $\alpha \colon Y \rightarrow L$. Since $Y$ is a basis of $U_0$, there is a homomorphism $\gamma \colon U_0 \rightarrow L$ extending $\alpha$. Set $N = \text{Ker}(\gamma)$, and note that by \cite[Proposition 3.2.2 (d)]{RZ} (applied to $\Phi$, and the formation of all finite groups), $\bar{\gamma} \colon U \rightarrow U/\bar{N} \cong U_0/N \cong L$ extends $\gamma$, so we have successfully extended $\alpha$ to $U$. $\blacksquare$  

\end{proof}

It is important to note that \propref{AbstProfBasisProp} does not generalize to the case of an infinite basis, since in this case $Y$ does not have to converge to $1$.

For an abstract free group $F$ on a set $X$, each $w \in F$ has a unique reduced presentation as a finite product of elements of $X$ and their inverses: $$ w = \prod_{i = 1}^{m} x_i^{\epsilon_i} , m \in \mathbb{N}$$ with $x_i \in X$, $\epsilon_i \in \{\pm 1\}$, and $x_i^{\epsilon_i}x_{i+1}^{\epsilon_{i + 1}} \neq 1$ for all $1 \leq i \leq m$. We set $\text{length}_X(w) = m$, and say that $w$ starts with $x_1^{\epsilon_1}$. The following proposition guarantees the existence of bases with additional properties.

\begin{proposition} \label{BasesProp}

Let $F$ be a free abstract (respectively, profinite) group with a basis (respectively, profinite basis) $X = \{x_1, \dots, x_n\}, n \in \mathbb{N}$, and let $K \leq F$ be a subgroup of finite index (respectively, $K \leq_o F$). Then for any $t \in X \cup X^{-1}$ not in $K$, there is a basis (respectively, profinite basis) $Y$ for $K$ such that:

\begin{enumerate}

\item If some $g \in K$ (respectively, $g \in K$ which is in the abstract subgroup generated by $X$) starts with $t$ then $length_Y(g) < length_X(g)$.

\item $X \cap K \subseteq Y$.

\end{enumerate}

\end{proposition}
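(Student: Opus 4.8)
The plan is to use the Reidemeister--Schreier method, making a careful choice of Schreier transversal so that the edge coming from $t$ ends up in the associated spanning tree; this forces a cancellation at the front of every element of $K$ that begins with $t$. First I would dispose of the profinite case: for $K \leq_o F$ the abstract free subgroup $\Phi = \langle X \rangle$ of rank $n$ is dense in $F$, so $[\Phi : K \cap \Phi] = [F : K] < \infty$, and a basis $Y$ of $K \cap \Phi$ is a profinite basis of $K$ by \propref{AbstProfBasisProp}; since conditions (1) and (2) only speak of elements of $\Phi$ and of their reduced words in $X$, it suffices to prove the statement with $F$ abstract free on $X$.

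So assume $F$ is abstract free on $X$, let $m = [F : K] < \infty$, and let $t \in X \cup X^{-1}$ with $t \notin K$. Form the Schreier (coset) graph $\Gamma$: its vertices are the right cosets $Kg$, and for each $1 \le i \le n$ and each vertex $v$ there is a directed edge labelled $x_i$ from $v$ to $vx_i$. Then $\Gamma$ is connected (as $X$ generates $F$) with $m$ vertices, $\pi_1(\Gamma, v_0) \cong K$ for $v_0 := K$, and for any spanning tree $\mathcal{T}$ of $\Gamma$ the set $Y = \{\, \overline{u}\, x_i\, \overline{w}^{-1} \,\}$ --- where $e$ runs over the (geometric) edges not in $\mathcal{T}$, with $e$ going from $u$ to $w$ and labelled $x_i$, and $\overline{v}$ denotes the reduced word read along the unique tree path from $v_0$ to $v$ --- is a basis of $K$. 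This is exactly the topological (Reidemeister--Schreier) proof of the Nielsen--Schreier theorem; see \cite[Proposition 17.5.6]{FJ}, \cite[Chapter 14]{Hig}, \cite[pp. 383--387]{Rot}.

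Now the decisive point: since $t \notin K$, the edge $e_1$ joining $v_0$ to $v_0 \cdot t$ and labelled by the generator underlying $t$ is not a loop, so it lies in some spanning tree; fix a spanning tree $\mathcal{T}$ with $e_1 \in \mathcal{T}$ and let $Y$ be the corresponding basis. Condition (2) is then automatic: if $x_i \in X \cap K$ then $v_0 x_i = v_0$, so the $x_i$-edge at $v_0$ is a loop, hence not in $\mathcal{T}$, hence $x_i = \overline{v_0}\, x_i\, \overline{v_0}^{-1} \in Y$. For condition (1), take $g \in K$ (lying in $\Phi$, in the profinite case) beginning with $t$, say $g = t\, a_2 \cdots a_\ell$ reduced with $\ell = \text{length}_X(g) \ge 2$. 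Running the Reidemeister--Schreier rewriting along the edge path traced out by $g$ from $v_0$ expresses $g$ as a product $\tau_1 \tau_2 \cdots \tau_\ell$ in which $\tau_j = 1$ whenever the $j$-th edge of the path lies in $\mathcal{T}$ and $\tau_j \in Y \cup Y^{-1}$ otherwise. The first edge of the path is exactly $e_1 \in \mathcal{T}$, and since $e_1 \in \mathcal{T}$ the tree geodesic from $v_0$ to $v_0 \cdot t$ is that single edge, so $\overline{v_0} = 1$ and $\overline{v_0 \cdot t} = t$; hence $\tau_1 = \overline{v_0}\, t\, \overline{v_0\cdot t}^{-1} = 1$. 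Therefore $g$ is a product of at most $\ell - 1$ elements of $Y \cup Y^{-1}$, whence $\text{length}_Y(g) \le \ell - 1 < \text{length}_X(g)$.

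The conceptual content here is small --- put the $t$-edge into the spanning tree --- and the main work is routine bookkeeping: verifying that the rewriting process has the stated form (trivial factors precisely at tree edges, telescoping back to $g$), that the generators $\overline u\,x_i\,\overline w^{-1}$ are nontrivial and freely generate $K$ (standard), and, most delicately, matching conventions so that ``$g$ starts with $t$'' really does translate into ``the first traversed edge is $e_1$''. This last point, which requires care with edge orientations and with the left/right coset conventions in $\Gamma$ (and handling $t = x_i$ versus $t = x_i^{-1}$), is the only place I expect one can slip.
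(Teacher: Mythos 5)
Your argument is correct and follows the paper's route: the reduction of the profinite case to the abstract one via \propref{AbstProfBasisProp} is exactly what the paper does, and your Schreier-graph/spanning-tree argument for the abstract case is precisely the content of \cite[Proposition 17.5.6]{FJ}, which the paper simply cites rather than reproves. The one point worth keeping explicit is the observation you already make, that $t\notin K$ guarantees the $t$-edge at the base vertex is not a loop and hence can be placed in the spanning tree; with that, conditions (1) and (2) follow as you describe.
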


\begin{proof}

For abstract free groups we have \cite[Proposition 17.5.6]{FJ}, so we shall treat only the profinite case. Let $\Phi$ be the free abstract group generated by $X$, and note that $[\Phi : (\Phi \cap K)] \leq [F : K] < \infty$ and that $t \notin (\Phi \cap K)$ so we can use the abstract version of our proposition to find a basis $Y$ for $\Phi \cap K$ such that $X \cap K \subseteq X \cap (K \cap \Phi) \subseteq Y$. Furthermore, for any $g \in (K \cap \Phi)$ the inequality $\text{length}_Y(g) < \text{length}_X(g)$ holds. To conclude, note that by \propref{AbstProfBasisProp}, $Y$ is a profinite basis for $K$. $\blacksquare$

\end{proof}

\propref{BasesProp} as well as several other results in the following section, has an appropriate analogue for the case of infinite rank. These analogues have more technical and complicated statements, and they are not necessary for proving our main assertions. Hence, in order to facilitate our exposition and to keep it as elementary as possible, we stick to the finitely generated case where appropriate.

We finish the background section with a simple application of a lemma of Gaschutz (see \cite[Lemma 17.7.2]{FJ}).

\begin{proposition} \label{GaschProp}

Let $F$ be a finitely generated free profinite group, and let $N \lhd_c F$. Then there exists a basis $X$ of $F$ such that: $$|X \cap N| \geq d(F) - d(F/N).$$

\end{proposition}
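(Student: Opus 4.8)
The plan is to invoke the lemma of Gaschütz (\cite[Lemma 17.7.2]{FJ}) in its profinite incarnation: if $F$ is a free profinite group of finite rank $e$ and $\pi \colon F \to \bar{F}$ is a continuous epimorphism onto a profinite group, then every generating system $\bar{x}_1, \dots, \bar{x}_e$ of $\bar{F}$ lifts to a basis $x_1, \dots, x_e$ of $F$ with $\pi(x_i) = \bar{x}_i$ for all $i$. The whole point is to feed this lemma a cleverly padded generating system of $F/N$.

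Concretely, set $n = \rank(F) = d(F)$ and $m = d(F/N)$, and let $\pi \colon F \to F/N$ be the quotient map. Since $F/N$ is a continuous image of $F$ we have $m \leq n$, so $n - m \geq 0$ and the asserted inequality is not vacuous. Choose a generating set $\bar{y}_1, \dots, \bar{y}_m$ of $F/N$ and pad it with $n - m$ copies of the identity to obtain an $n$-element generating system $(\bar{y}_1, \dots, \bar{y}_m, 1, \dots, 1)$ of $F/N$ (in finite rank the convergence-to-$1$ condition is automatic).

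Applying Gaschütz's lemma to $\pi$ and this system yields a basis $x_1, \dots, x_n$ of $F$ with $\pi(x_i) = \bar{y}_i$ for $1 \leq i \leq m$ and $\pi(x_i) = 1$ for $m < i \leq n$. The latter conditions say exactly that $x_{m+1}, \dots, x_n \in \Ker(\pi) = N$. Hence $X := \{x_1, \dots, x_n\}$ is a basis of $F$ with $|X \cap N| \geq n - m = d(F) - d(F/N)$, which is what we wanted.

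The only step requiring genuine care is checking that the form of Gaschütz's lemma we use — lifting a generating system of a possibly infinite profinite quotient, with some entries permitted to be trivial — is indeed covered by \cite[Lemma 17.7.2]{FJ}; once this is granted, the argument is immediate, which is why we regard this proposition as a simple consequence of that lemma.
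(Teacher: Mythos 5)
Your proposal is correct and follows essentially the same route as the paper: pad a minimal generating system of $F/N$ with identities, lift it via Gasch\"utz's lemma, and observe that the lifts of the trivial elements land in $N$. The only cosmetic difference is that the paper obtains a generating set of size $d(F)$ from the lemma and then cites \cite[Lemma 3.3.5 (b)]{RZ} to conclude it is a basis, which settles the point of care you raise at the end.
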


\begin{proof}

Let $\phi \colon F \to F/N$ be the quotient map, and set $n = d(F)$, $m = d(F/N)$. Pick some $h_1, \dots, h_m$ generating $F/N$, and apply \cite[Lemma 17.7.2]{FJ} to $\phi \colon F \to F/N$ and to the system $h_1, \dots, h_m, 1, \dots, 1$ of generators of $F/N$ consisting of $n$ elements. We get a generating set $X = \{x_1, \dots, x_n\}$ for $F$, such that $\phi(x_j) = 1$ for each $m < j \leq n$. By \cite[Lemma 3.3.5 (b)]{RZ}, $X$ is a basis for $F$, and $x_{m+1}, \dots, x_n \in (X \cap N)$ as required. $\blacksquare$

\end{proof}
   
\section{Combinatorics, Words, and \\* Embedding Problems}

In view of Iwasawa's criterion (see \cite[Corollary 24.8.3]{FJ}), the ability to properly solve embedding problems constitutes the technical heart of establishing the profinite freeness of a profinite group. Here, we give several combinatorial arguments that will aid us in acquiring this ability.

We present a basic claim, the proof of which is an application of the pigeonhole principle.

\begin{proposition} \label{SimpleSolProp}

Let $F$ be a finitely generated free profinite group with basis $X = \{x_1, \dots, x_r\}$, where $r \in \mathbb{N}$. Let $$\begin{tikzpicture}[scale=1.5]
\node (B) at (1,1) {$F$};
\node (C) at (0,0) {$A$};
\node (D) at (1,0) {$B$};
\path[->,font=\scriptsize,>=angle 90]
(B) edge node[right]{$\theta$} (D)
(C) edge node[above]{$\alpha$} (D);
\end{tikzpicture}$$be a finite embedding problem and put $k = |A|$. Suppose that $k \leq r$, and that $\lambda \colon \{x_{k+1}, \dots, x_r\} \rightarrow A$ satisfies $\alpha(\lambda(x_i)) = \theta(x_i)$ for each $k+1 \leq i \leq r$. Then there is a proper solution $\psi \colon F \rightarrow A$ satisfying:

\begin{enumerate}

\item For every $k+1 \leq i \leq r$, $\psi (x_i) = \lambda(x_i)$. 

\item For every $\{x_1, \dots, x_k\} \subseteq H \leq_c F$ with $\theta(H) = B$, it holds that $\psi(H) = A$.

\end{enumerate}

\end{proposition}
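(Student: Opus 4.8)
The plan is to build $\psi$ by freely prescribing its values on the basis. Since $F$ is free profinite on the \emph{finite} set $X$, any choice of elements $s_1, \dots, s_k \in A$, together with the prescribed values $\psi(x_i) = \lambda(x_i)$ for $k < i \le r$, extends uniquely to a continuous homomorphism $\psi \colon F \to A$, which satisfies condition 1 by construction. Comparing $\alpha \circ \psi$ with $\theta$ on the basis (both maps are continuous, so agreement on $X$ suffices), $\psi$ solves the embedding problem as soon as $\alpha(s_i) = \theta(x_i)$ for every $i \le k$ — that is, as soon as $s_i$ lies in the coset $\alpha^{-1}(\theta(x_i))$ of $K := \Ker(\alpha)$ — since for $i > k$ the identity $\alpha(\lambda(x_i)) = \theta(x_i)$ is part of the hypothesis. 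So everything reduces to choosing $s_i \in \alpha^{-1}(\theta(x_i))$ for $1 \le i \le k$ so that the resulting $\psi$ is surjective and meets condition 2.

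Next I would isolate a single property of the set $S := \{s_1, \dots, s_k\}$ from which both remaining requirements follow: that $S$ is contained in no maximal subgroup $M$ of $A$ with $\alpha(M) = B$. Indeed, let $H \leq_c F$ contain $x_1, \dots, x_k$ and satisfy $\theta(H) = B$ (the case $H = F$ will also handle surjectivity). Then $\psi(H)$ contains $S$ and $\alpha(\psi(H)) = \theta(H) = B$; were $\psi(H)$ a proper subgroup of $A$, it would lie inside some maximal subgroup $M$, which would then satisfy $\alpha(M) = B$ and $S \subseteq M$, contrary to our property. Hence $\psi(H) = A$. It remains to produce such an $S$, and here it is worth stressing the point that the coset constraints may make it \emph{impossible} for $S$ to generate $A$ (for instance when all $\theta(x_i) = 1$, forcing $S \subseteq K$); what matters is only that $S$ escape the maximal subgroups that surject onto $B$. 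This reduction is the one conceptual step of the proof; once it is in place the rest is a counting argument.

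For that final combinatorial step — an averaging (pigeonhole) argument — let $\mathcal{M}$ be the set of maximal subgroups $M$ of $A$ with $\alpha(M) = B$. For $M \in \mathcal{M}$ one has $MK = A$ but $M \neq A$, so $K \not\subseteq M$, whence $K \cap M$ is a proper subgroup of $K$ and $|K \cap M| \le |K|/2$; moreover every coset of $K$ meets $M$ in exactly $|K \cap M|$ elements (it meets $M$ at all precisely because $\alpha(M) = B$). Now count the tuples $(s_1, \dots, s_k)$ with $s_i \in \alpha^{-1}(\theta(x_i))$ for every $i$: there are $|K|^k$ of them in total, whereas for a fixed $M \in \mathcal{M}$ exactly $|K \cap M|^k \le (|K|/2)^k = |K|^k/2^k$ of them lie inside $M$. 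Since every member of $\mathcal{M}$ is a proper subset of $A$ containing the identity, $|\mathcal{M}| \le 2^{|A|-1} - 1 < 2^{|A|} = 2^k$ (recall $k = |A|$), so the number of tuples contained in \emph{some} $M \in \mathcal{M}$ is at most $|\mathcal{M}| \cdot |K|^k/2^k < |K|^k$; therefore a tuple lying in no $M \in \mathcal{M}$ exists, and it furnishes the desired $S$. The degenerate cases ($A$ or $K$ trivial, where $\mathcal{M} = \emptyset$ and any admissible choice works) are immediate, so the main — indeed the only real — obstacle is the reduction carried out in the second paragraph.
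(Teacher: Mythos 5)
Your proof is correct, but it parts ways with the paper's at the key combinatorial step. The paper makes the same reduction to choosing $s_i \in \alpha^{-1}(\theta(x_i))$ for $i \leq k$, but then applies the pigeonhole principle to the values $\theta(x_1), \dots, \theta(x_k) \in B$: since $k = |A|$, some fiber $\theta^{-1}(b)$ contains at least $|A|/|B| = |\Ker(\alpha)|$ of these generators, and the paper maps those generators \emph{onto} the entire coset $\alpha^{-1}(b)$. A subgroup of $A$ containing a full coset of $\Ker(\alpha)$ contains $\Ker(\alpha)$ itself, so for every admissible $H$ one gets $\psi(H) = \psi(H)\Ker(\alpha) = \alpha^{-1}(\theta(H)) = A$ directly, with no mention of maximal subgroups. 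You instead isolate the condition that $S$ escape every maximal subgroup of $A$ surjecting onto $B$, and produce such an $S$ by a union bound: at most $|K \cap M|^k \leq (|K|/2)^k$ bad tuples per such $M$, and fewer than $2^k$ subgroups $M$ to worry about; all the counting details (the coset-intersection count $|aK \cap M| = |K \cap M|$, the index-two bound, the crude bound on the number of maximal subgroups, and the degenerate case $\Ker(\alpha) = 1$) check out. The paper's argument is more explicit — it writes down the assignment rather than proving one exists — and needs no case analysis; yours has the advantage of making visible exactly what is being used, and in fact proves a slightly stronger statement: the hypothesis $k = |A|$ could be relaxed to $k > \log_2 m$, where $m$ is the number of maximal subgroups of $A$ mapping onto $B$.
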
  

\begin{proof}

By the pigeonhole principle, there is some $b \in B$ such that: $$|\theta^{-1}(b) \cap \{x_1, \dots, x_k\}| \geq \frac{k}{|B|} = \frac{|A|}{|B|} = |\text{Ker}(\alpha)| = |\alpha^{-1}(b)|.$$ Hence, we can and find a function $$\psi \colon \theta^{-1}(b) \cap \{x_1, \dots, x_k\} \rightarrow A$$ with image equal to $\alpha^{-1}(b)$. We extend $\psi$ to $X$ by taking $\psi(x_i)$ to be an arbitrary element of $\alpha^{-1}(\theta(x_i))$ for each $1 \leq i \leq k$ with $\theta(x_i) \neq b$, and $\psi(x_i) = \lambda(x_i)$ for $k+1 \leq i \leq r$. Since $X$ is a basis, $\psi$ extends (uniquely) to a continuous homomorphism $\psi \colon F \rightarrow A $, with $\alpha \psi = \theta$ on $X$, and thus on all of $F$. Now, let $\{x_1, \dots, x_k\} \subseteq H \leq_c F$, set $G = \psi(H)$ and note that $\alpha^{-1}(b) \subseteq G$ generates a subgroup that contains $\text{Ker}(\alpha)$, so: $$ G = G\text{Ker}(\alpha) =  \alpha^{-1}(\alpha(G)) = \alpha^{-1}(\theta(H)) =  \alpha^{-1}(B) = A$$ and we have a proper solution with $\psi(H) = A$ as required. $\blacksquare$

\end{proof}

In the following, we combine \propref{SimpleSolProp} and \propref{BasesProp} to derive some plausible condition on a closed subgroup of a finitely generated free profinite group, sufficient for properly solving a "bunch" of embedding problems.

\begin{proposition} \label{ContBaseProp}

Let $F$ be a finitely generated free profinite group, and let $H \leq_c F$ be a closed subgroup containing at least $k$ distinct elements of some basis of $F$, for some $k \in \mathbb{N}$. Then any finite embedding problem $$\begin{tikzpicture}[scale=1.5]
\node (B) at (1,1) {$H$};
\node (C) at (0,0) {$A$};
\node (D) at (1,0) {$B$};
\path[->,font=\scriptsize,>=angle 90]
(B) edge node[right]{$\theta$} (D)
(C) edge node[above]{$\alpha$} (D);
\end{tikzpicture}$$ with $|A| \leq k$ is properly solvable.

\end{proposition}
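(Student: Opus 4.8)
The plan is to replace the given embedding problem over $H$ by one over a suitable open subgroup $U$ of $F$ that contains $H$, and then to finish using \propref{SimpleSolProp} together with \propref{BasesProp}. Fix a basis $X = \{x_1, \dots, x_n\}$ of $F$ with $x_1, \dots, x_k \in H$ (reindexing if needed). Since $\theta$ has finite image, $\Ker\theta$ is a finite-index, hence open, subgroup of $H$, so there is an open normal subgroup $W \lhd_o F$ with $H \cap W \leq \Ker\theta$. Put $U := HW$: since $W \lhd F$ this is a subgroup of $F$; it is closed (it equals $\pi^{-1}(\pi(H))$ for the projection $\pi \colon F \to F/W$) and contains the open subgroup $W$, so $U \leq_o F$, and $x_1,\dots,x_k \in H \subseteq U$. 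The second isomorphism theorem gives a topological isomorphism $U/W \cong H/(H\cap W)$, and since $H\cap W \leq \Ker\theta$ the map $\theta$ factors through $H/(H\cap W)$; composing $U \twoheadrightarrow U/W \cong H/(H\cap W) \twoheadrightarrow B$ yields an epimorphism $\Theta \colon U \to B$ with $\Theta|_H = \theta$. We now have the finite embedding problem $(\Theta \colon U \to B,\ \alpha \colon A \to B)$ over $U$.

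Next I would equip $U$ with a convenient profinite basis. If $U = F$ take $Y := X$; if $U \subsetneq F$, then $X \not\subseteq U$, so pick $t \in (X \cup X^{-1}) \setminus U$ and apply \propref{BasesProp} to the open subgroup $U \leq_o F$ and this $t$, obtaining a profinite basis $Y$ of $U$ (in particular $U$ is free profinite) with $X \cap U \subseteq Y$. In either case $\{x_1,\dots,x_k\} \subseteq X \cap U \subseteq Y$, so after reindexing $Y = \{y_1,\dots,y_r\}$ with $y_i = x_i$ for $1 \leq i \leq k$ and $r = \rank(U) \geq k \geq |A|$.

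Now apply \propref{SimpleSolProp} to $U$ with basis $Y$ and the embedding problem $(\Theta,\alpha)$, taking $\kappa := |A|$ in the role of its "$k$" (legitimate since $\kappa \leq k \leq r$): choosing for each $\kappa < i \leq r$ some $\lambda(y_i) \in \alpha^{-1}(\Theta(y_i))$, which exists as $\alpha$ is onto, we obtain a proper solution $\psi \colon U \to A$ of $(\Theta,\alpha)$ with the property that $\psi(H') = A$ whenever $\{y_1,\dots,y_\kappa\} \subseteq H' \leq_c U$ and $\Theta(H') = B$. Apply this to $H' := H$, which is a closed subgroup of $U$ containing $y_1,\dots,y_\kappa = x_1,\dots,x_\kappa$ (as $\kappa \leq k$) with $\Theta(H) = \theta(H) = B$; this gives $\psi(H) = A$. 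Hence $\psi|_H \colon H \to A$ is surjective and satisfies $\alpha \circ (\psi|_H) = \Theta|_H = \theta$, i.e. it properly solves the original embedding problem over $H$.

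The step requiring the most care is the setup of the first two paragraphs — choosing $U$ so that it simultaneously contains $x_1, \dots, x_k$ and carries an extension of $\theta$, and then producing a basis of $U$ extending $\{x_1,\dots,x_k\}$ — but the latter is exactly \propref{BasesProp}(2), and once $U$, $\Theta$, and $Y$ are in place the conclusion follows from \propref{SimpleSolProp} almost formally, so I do not anticipate a genuinely hard step.
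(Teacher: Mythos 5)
Your proof is correct and follows essentially the same route as the paper: extend $\theta$ to an open subgroup $U \supseteq H$ of $F$, produce a basis of $U$ extending $\{x_1,\dots,x_k\}$ via Proposition~\ref{BasesProp}(2), and invoke Proposition~\ref{SimpleSolProp}. The only cosmetic difference is that you build the open subgroup $U = HW$ and the extension $\Theta$ by hand, whereas the paper cites \cite[Lemma 1.2.5(c)]{FJ} for exactly this; you are also a bit more careful about taking $\kappa = |A|$ as the threshold in Proposition~\ref{SimpleSolProp} and about the degenerate case $U = F$, both of which are sound.
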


\begin{proof}

Consider such an embedding problem. Using \mbox{\cite[Lemma 1.2.5 (c)]{FJ}}, we extend $\theta$ to some $H \leq L \leq_o F$. By our assumption, $F$ has a basis $$Y = \{x_1, \dots, x_k,y_1, \dots , y_d\}$$ for some $d \in \mathbb{N}$, and $\{x_1, \dots, x_k \} \subseteq H$. Since $\{x_1, \dots , x_k\} \subseteq L$, we infer from \propref{BasesProp} (profinite version, (2)) that there is some $r \in \mathbb{N}$, and a basis $$X = \{x_1, \dots, x_k, \dots, x_r\}$$ of $L$ such that $k \leq r$. Since $\alpha$ is surjective, we can choose some $\lambda(x_i) \in \alpha^{-1}(\theta(x_i))$ for each $k+1 \leq i \leq r$. Applying \propref{SimpleSolProp} to the embedding problem $$\begin{tikzpicture}[scale=1.5]
\node (B) at (1,1) {$L$};
\node (C) at (0,0) {$A$};
\node (D) at (1,0) {$B$};
\path[->,font=\scriptsize,>=angle 90]
(B) edge node[right]{$\theta$} (D)
(C) edge node[above]{$\alpha$} (D);
\end{tikzpicture}$$ to $X$, and to $\lambda$, we get a proper solution $\psi \colon L \rightarrow A$ satisfying \mbox{$\psi(H) = A$} (since $\theta(H) = B$), so we have a proper solution for our initial embedding problem as required. $\blacksquare$

\end{proof}

Besides the role that the next lemma plays in the upcoming corollaries, it can be used to generalize a group theoretic lemma of Levi (see \cite[Lemma 17.5.10]{FJ}), and a theorem of Takahasi \mbox{(see \cite[Theorem 2.12]{MKS})} to arbitrary descending chains (by appealing to \cite[Proposition 17.5.6]{FJ} instead of \propref{BasesProp} which is used in the proof of our lemma). A more general form of our lemma, appears in \cite[Section 2.4, Exercise 36]{MKS}.

\begin{lemma} \label{AbstChangeVarLem}

Let $F$ be a free group on a finite set $X$, and let $\{U_i\}_{i \in \mathbb{N}}$ be a strictly descending chain of finite-index subgroups. Put $$L = \bigcap_{i \in \mathbb{N}} U_i$$ and suppose that $\text{rank}(L) \geq n$ for some $n \in \mathbb{N}$. Then there exists some $j \in \mathbb{N}$ and a basis $C$ of $U_j$ such that $|C \cap L| \geq n$.

\end{lemma}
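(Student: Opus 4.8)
The plan is to prove a one‑step version of the statement and then iterate it. Precisely, I would first establish the following: \emph{if $V_0 \supsetneq V_1 \supsetneq \cdots$ is a strictly descending chain of finite‑index subgroups of a finitely generated free group, with intersection $L$, and $X$ is a basis of $V_0$ with $s := |X \cap L| < \rank(L)$, then some $V_j$ has a basis $C$ with $|C \cap L| \geq s+1$.} Granting this, the lemma follows at once: each $U_i$ is free of finite rank by Nielsen--Schreier, so we fix any basis $X_0$ of $U_0$ and apply the one‑step statement repeatedly --- each time replacing the chain by a tail of itself and the basis by one meeting $L$ in strictly more elements --- until after at most $n - |X_0 \cap L|$ steps we reach a basis of some $U_j$ meeting $L$ in at least $n$ elements. (One uses here that $[U_i : U_{i'}] \leq [F : U_{i'}] < \infty$ for $i \leq i'$, so every chain that arises is again of the required type.)

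For the one‑step statement, put $Z = X \cap L$. Since $Z$ is a subset of a basis of $V_0$ it freely generates $\langle Z \rangle$, which therefore has rank $s < \rank(L)$; as $\langle Z \rangle \leq L$ this forces $\langle Z \rangle \subsetneq L$, so $L \setminus \langle Z \rangle \neq \emptyset$. Choose $w \in L \setminus \langle Z \rangle$ of minimal length with respect to $X$ and set $\ell = \text{length}_X(w)$. An element of $L$ of $X$‑length one lies in $X \cup X^{-1}$, hence in $Z \cup Z^{-1} \subseteq \langle Z \rangle$ (a basis element cannot lie in the subgroup generated by the other basis elements, so $\langle Z\rangle \cap (X\cup X^{-1}) = Z\cup Z^{-1}$), so $\ell \geq 2$. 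The key point is that the first letter $a \in X \cup X^{-1}$ of the reduced word for $w$ is \emph{not} in $L$: if it were, then $a \in Z \cup Z^{-1} \subseteq \langle Z \rangle \leq L$, and $a^{-1}w$ would be a member of $L \setminus \langle Z \rangle$ of $X$‑length $\ell - 1$, contradicting minimality. Since $a \notin L = \bigcap_i V_i$ while $a \in V_0$, there is a least index $i_1 \geq 1$ with $a \notin V_{i_1}$. Now apply \propref{BasesProp} to $V_0$, its basis $X$, the finite‑index subgroup $V_{i_1}$, and the letter $t = a$: it yields a basis $Y$ of $V_{i_1}$ with $X \cap V_{i_1} \subseteq Y$ --- so $Z \subseteq Y$, as $Z \subseteq L \subseteq V_{i_1}$ --- and with $\text{length}_Y(w) < \text{length}_X(w) = \ell$, since $w \in V_{i_1}$ is a reduced word starting with $a$.

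It remains to analyse the new basis. If $\text{length}_Y(w) = 1$, then $w$ is a basis element of $Y$ lying in $L$ but, since $w \notin \langle Z \rangle \supseteq Z$, not in $Z$; as $Z \subseteq Y$ this gives $|Y \cap L| \geq s+1$ and we are done. If $\text{length}_Y(w) \geq 2$, then either $|Y \cap L| \geq s+1$ (again done) or $Y \cap L = Z$ (since $Z \subseteq Y \cap L$ and the two sets have the same size $s$); in the latter case $w$ still lies in $L \setminus \langle Z \rangle$ but now has $Y$‑length $< \ell$, so a minimal‑length element of $L \setminus \langle Z \rangle$ with respect to $Y$ has length $< \ell$, and we repeat the construction with $(V_{i_1}, Y)$ and the chain $V_{i_1} \supsetneq V_{i_1+1} \supsetneq \cdots$ in place of $(V_0, X)$ and the original chain. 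The quantity ``minimal length of $L \setminus \langle Z \rangle$ relative to the ambient basis'' --- which is always $\geq 2$ as long as that basis meets $L$ exactly in $Z$ --- strictly decreases at each iteration, so the process terminates, and it can only terminate by landing in the ``$\geq s+1$'' case. This proves the one‑step statement.

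The step I expect to be the main obstacle is precisely the one that forces the iteration: after shortening $w$ there is no reason for $w$ (or any transform of it) to have become a basis element contained in $L$, so one application of \propref{BasesProp} does not suffice, and one must track a strictly decreasing length while showing the increment‑by‑one goal is eventually met. The minimal‑length choice of $w$ is what makes the argument go through, since it is exactly what guarantees that the leading letter $a$ escapes $L$, hence escapes some $V_{i_1}$, which is what lets \propref{BasesProp} be invoked with $t = a$. (An alternative route to the one‑step statement would go through Marshall Hall's theorem, realising $\langle Z, w\rangle$ as a free factor of a finite‑index subgroup, but matching that subgroup to the given chain appears to require essentially the argument above.)
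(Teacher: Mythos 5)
Your proof is correct and is essentially the paper's own argument: both rest on the same two ingredients, namely the observation that the first letter of a minimal-length element of $L\setminus\langle Z\rangle$ must escape $L$ (hence some member of the chain), and \propref{BasesProp} applied to that letter to shorten the witness while keeping $Z$ inside the new basis. The only difference is presentational --- you run an explicit double descent (outer loop increasing $|C\cap L|$, inner loop strictly decreasing the witness's word length), whereas the paper encodes the same descent as a contradiction from a doubly extremal choice (a basis maximizing $|A\cap L|$ over all $U_m$, then a witness of minimal length over all bases containing $A\cap L$).
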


\begin{proof}

Suppose that the conclusion does not hold, and set: $$R = \{(m,D) : m \in \mathbb{N}, \hspace{1mm} D \text{ is a basis of } U_m \}.$$ By the Nielsen-Schreier theorem, subgroups of $F$ possess bases, so $R \neq \emptyset$, and we can pick some $(j,A) \in R$ with $r = |A \cap L| < n$ the largest possible over all of $R$. Since $r < \rank(L)$, it follows that there is some $l \in L$ which is not generated by $A \cap L$. Define: $$S = \{ (x,i,E) : x \in L, \hspace{1mm} x \notin \langle A \cap L \rangle, \hspace{1mm} (A \cap L) \subseteq E, \hspace{1mm} (i,E) \in R  \}.$$ By the above, $(l,j,A) \in S$ so $S \neq \emptyset$, and we can choose some $(w,d,B) \in S$ which minimizes $\text{length}_B(w)$ over $S$. Let $$w = b_1^{\epsilon_1} \dotsi b_k^{\epsilon_k}$$ be the corresponding reduced presentation with $b_i \in B$, $\epsilon_i \in \{\pm 1\}$, where $1 \leq i \leq k$ and $k = \text{length}_B(w)$.

Suppose first that $b_1 \in L$. If $b_1 \in (A \cap L)$, then $(b_1^{-\epsilon_1}w,d,B) \in S$ and $\text{length}_B(b_1^{-\epsilon_1}w) = \text{length}_B(b_2^{\epsilon_2} \dotsi b_k^{\epsilon_k}) \leq k - 1$ which is a contradiction to minimality. On the other hand, if $b_1 \notin (A \cap L)$ then $A \cap L \subseteq B \cap L$ and $b_1 \in B \cap L$ imply that $|B \cap L| \geq r + 1$ which contradicts the choice of $r$.

Now assume that $b_1 \notin L$, so there is some $d < p \in \mathbb{N}$ such that $b_1 \notin U_p$. Applying \propref{BasesProp} to $b_1^{\epsilon_1} \notin U_p \leq U_d$ and the basis $B$, we conclude that there is a basis $Y$ for $U_p$ such that $$(A \cap L) \subseteq (B \cap L) \subseteq (B \cap U_p) \subseteq Y$$ and $\text{length}_Y(w) < \text{length}_B(w)$ since $w \in L \leq U_p$ starts with $b_1^{\epsilon_1}$. It is clear that $(w,p,Y) \in S$ so we have a contradiction to minimality. $\blacksquare$

\end{proof}

In the following sequence of corollaries, we establish the verbal subgroup theorem (see \thmref{FreeVerbalSubgroupThm}).

\begin{corollary} \label{ChangeVarCor}

Let $F$ be a free profinite group on a finite set $X$, and let $H \leq_c F$. Denote by $\Phi$ the abstract subgroup of $F$ generated by $X$, and suppose that $\rank(H \cap \Phi) \geq n$ for some $n \in \mathbb{N}$. Then there exists some $H \leq V \leq_o F$, and a basis $C$ of $V$ such that $|H \cap C| \geq n$.

\end{corollary}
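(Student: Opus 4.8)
The plan is to reduce this to the abstract statement \lemref{AbstChangeVarLem} by extracting, from the open subgroups of $F$ lying above $H$, a suitable descending chain of finite-index subgroups of $\Phi$, applying the lemma inside $\Phi$, and then transporting the resulting basis back up to $F$ via \propref{AbstProfBasisProp}.

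First I would recall two standard facts about the finitely generated free profinite group $F$: since $F$ is profinite, $H$ is the intersection of the open subgroups of $F$ containing it; and since $F$ is finitely generated, it has only finitely many open subgroups of each index, so the family $\{U : H \leq U \leq_o F\}$ is countable. Enumerating this family and passing to partial intersections, I obtain a descending chain $V_1 \supseteq V_2 \supseteq \cdots$ of open subgroups of $F$, each containing $H$, with $\bigcap_i V_i = H$.

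If this chain is eventually constant, then $H$ is itself open; in that case $[\Phi : H \cap \Phi] \leq [F:H] < \infty$, so $H \cap \Phi$ has a finite basis $C$, which by \propref{AbstProfBasisProp} is a profinite basis of $H$. Since $C \subseteq H \cap \Phi \subseteq H$, we get $|H \cap C| = |C| = \rank(H \cap \Phi) \geq n$, and we may take $V = H$. Otherwise, I pass to a strictly descending subsequence $W_1 \supsetneq W_2 \supsetneq \cdots$, which still satisfies $\bigcap_i W_i = H$. The one point needing care here is that the intersected chain $\{W_i \cap \Phi\}$ must remain \emph{strictly} descending: this is exactly where density of $\Phi$ in $F$ is used — $W_i \setminus W_{i+1}$ is a nonempty open subset of $F$, hence meets $\Phi$, yielding an element of $(W_i \cap \Phi)\setminus(W_{i+1}\cap \Phi)$. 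Thus $\{W_i \cap \Phi\}$ is a strictly descending chain of finite-index subgroups of the finitely generated abstract free group $\Phi$, with intersection $\bigcap_i (W_i \cap \Phi) = H \cap \Phi$, a group of rank $\geq n$ by hypothesis.

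Now I apply \lemref{AbstChangeVarLem} to $\Phi$ and this chain, obtaining an index $j$ and a basis $C$ of $W_j \cap \Phi$ with $|C \cap (H \cap \Phi)| \geq n$. Setting $V = W_j$, which is open in $F$ and contains $H$, \propref{AbstProfBasisProp} shows that $C$ is a profinite basis of $V$; and since $C \subseteq \Phi$ we have $C \cap H = C \cap (H \cap \Phi)$, so $|C \cap H| \geq n$, as required. I expect the only real obstacle to be the bookkeeping in the chain extraction — ensuring strictness survives both passage to a subsequence and intersection with $\Phi$, and isolating the degenerate case where $H$ is already open — since the combinatorial heart of the matter has already been packaged into \lemref{AbstChangeVarLem}.
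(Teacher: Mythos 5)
Your proposal is correct and follows essentially the same route as the paper: handle the open case separately, extract a strictly descending chain of open subgroups intersecting in $H$, intersect with $\Phi$ (the paper gets strictness and finite index from the equality $[\Phi : V_i \cap \Phi] = [F : V_i]$, you get strictness from density — both are fine), apply \lemref{AbstChangeVarLem}, and lift the basis via \propref{AbstProfBasisProp}. No gaps.
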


\begin{proof}

If $H \leq_o F$, then we can just take $V = H$ and any basis of it, so we may assume that there is a strictly descending sequence $\{V_i\}_{i \in \mathbb{N}}$ of open subgroups of $F$ intersecting in $H$ (just combine \mbox{\cite[Proposition 2.5.1 (b)]{RZ}} with \cite[Proposition 2.1.4 (a)]{RZ}). Set $$U_i = V_i \cap \Phi$$ for every $i \in \mathbb{N}$, and note that by \cite[Proposition 3.2.2 (a),(d)]{RZ}, $[\Phi : U_i] = [F : V_i]$ for all $i \in \mathbb{N}$, so we have defined a strictly decreasing sequence of finite index subgroups of $\Phi$ intersecting in $\Phi \cap H$. By \lemref{AbstChangeVarLem}, there is some $j \in \mathbb{N}$, and a basis $C$ of $U_j$ such that $|H \cap C| \geq |(H \cap \Phi) \cap C| \geq n$. Finally, we infer that $C$ is a basis for $V_j$ from \propref{AbstProfBasisProp}. $\blacksquare$

\end{proof}

\begin{corollary} \label{RankCor}

Let $F$ be a free profinite group on a finite set $X$, and let $H \leq_c F$. Denote by $\Phi$ the abstract subgroup of $F$ generated by $X$, and suppose that $\rank(H \cap \Phi) \geq n$ for some $n \in \mathbb{N}$. Then any finite embedding problem $$\begin{tikzpicture}[scale=1.5]
\node (B) at (1,1) {$H$};
\node (C) at (0,0) {$A$};
\node (D) at (1,0) {$B$};
\path[->,font=\scriptsize,>=angle 90]
(B) edge node[right]{$\theta$} (D)
(C) edge node[above]{$\alpha$} (D);
\end{tikzpicture}$$ with $|A| \leq n$ is properly solvable.

\end{corollary}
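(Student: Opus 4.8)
The plan is to chain together the two results just established, with \corref{ChangeVarCor} feeding directly into \propref{ContBaseProp}. Given a finite embedding problem for $H$ with $|A| \le n$, I would first invoke \corref{ChangeVarCor}: since $\rank(H \cap \Phi) \ge n$, it produces an open subgroup $H \le V \le_o F$ together with a basis $C$ of $V$ for which $|H \cap C| \ge n$. In particular $V$ is itself a finitely generated free profinite group — it is open in the finitely generated $F$, and the basis $C$ exhibits it as free profinite — and $H$ is a closed subgroup of $V$ containing at least $n$ distinct elements of the basis $C$ of $V$.

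Next I would regard the given embedding problem as an embedding problem for $H$ sitting inside $V$ rather than inside $F$, and apply \propref{ContBaseProp} with $V$ playing the role of $F$ and with $k = n$. Since $|A| \le n$ and $H$ contains at least $n$ elements of a basis of $V$, that proposition produces a proper solution of the embedding problem, which is exactly what is required.

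A couple of harmless points should be checked along the way: the hypothesis is only non‑vacuous for $n \ge 1$ (no finite group $A$ satisfies $|A| \le 0$), so one may assume $n \ge 1$; and when $H$ is itself open in $F$ the appeal to \corref{ChangeVarCor} still goes through, as its proof treats the case $H \le_o F$ separately. I do not expect a genuine obstacle here — the substantive work was already carried out in \lemref{AbstChangeVarLem} and \propref{SimpleSolProp} — but the one thing to be careful about is that \propref{ContBaseProp} is phrased for a closed subgroup of a \emph{finitely generated free profinite} group, so one must explicitly record that the open subgroup $V$ produced by \corref{ChangeVarCor} is again of this form, with $C$ as an explicit basis, before applying it.
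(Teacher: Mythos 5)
Your proposal is correct and is exactly the paper's argument: the paper's proof of this corollary is the one-line observation that it follows immediately from \corref{ChangeVarCor} combined with \propref{ContBaseProp}, applied with $V$ in the role of the ambient finitely generated free profinite group. The extra checks you record (that $V$ is open and hence finitely generated free with basis $C$, and the degenerate case $n=0$) are harmless and left implicit in the paper.
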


\begin{proof}

This is immediate from \lemref{ChangeVarCor} and \propref{ContBaseProp}. $\blacksquare$

\end{proof}

\begin{corollary} \label{InfCor}

Let $F$ be a free profinite group on a finite set $X$, and let $H \leq_c F$. Denote by $\Phi$ the abstract subgroup of $F$ generated by $X$, and suppose that $\mathrm{rank}(H \cap \Phi) = \infty$. Then $H$ is a free profinite group of rank $\aleph_0$.

\end{corollary}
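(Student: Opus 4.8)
The plan is to read off \corref{InfCor} as the combination of \corref{RankCor} with Iwasawa's criterion (\cite[Corollary 24.8.3]{FJ}). Accordingly, I would verify the two hypotheses that criterion demands of $H$: that $\mathrm{rank}(H) \le \aleph_0$, and that every finite embedding problem for $H$ is properly solvable.

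For the rank bound, since $X$ is finite the group $F$ is second countable (metrizable), hence so is its closed subgroup $H$; therefore $H$ has only countably many open subgroups and $d(H) \le \aleph_0$. This is the only genuinely new verification, and it is routine. (Note also that the hypothesis $\mathrm{rank}(H \cap \Phi) = \infty$ already forces $[F:H] = \infty$, since a finite-index subgroup of a finitely generated free group has finite rank; so there is no clash with the case $H \leq_o F$.)

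For the embedding problems, take any finite embedding problem for $H$ with $|A| = n$ (in the notation of \corref{RankCor}). Since $\mathrm{rank}(H \cap \Phi) = \infty \ge n$, \corref{RankCor} produces a \emph{proper} solution. Hence every finite embedding problem for $H$ is properly solvable, and Iwasawa's criterion yields that $H$ is free profinite of rank $\aleph_0$; in particular, proper solvability of all such problems already forces $H$ to be infinitely generated, so the rank is exactly $\aleph_0$ rather than finite.

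I do not expect a real obstacle: the combinatorial core was carried out in \lemref{AbstChangeVarLem}, \corref{ChangeVarCor} and \corref{RankCor}, and what remains is bookkeeping. The two points to keep an eye on are that \corref{RankCor} delivers a \emph{proper} solution (exactly the input Iwasawa's criterion wants), and that the countability $d(H) \le \aleph_0$ is in place before invoking the criterion.
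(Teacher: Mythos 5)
Your proposal matches the paper's proof: both deduce from \corref{RankCor} that every finite embedding problem for $H$ is properly solvable (since any such problem has $|A|\leq n$ for some $n$ while $\mathrm{rank}(H\cap\Phi)=\infty\geq n$), and then invoke Iwasawa's criterion \cite[Corollary 24.8.3]{FJ}. The explicit verification that $d(H)\leq\aleph_0$ via second countability is a reasonable added detail but does not change the route.
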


\begin{proof}

By \corref{RankCor}, every finite embedding problem for $H$ is properly solvable, so we invoke \cite[Corollary 24.8.3]{FJ}. $\blacksquare$

\end{proof}

Now, the verbal subgroup theorem (see \thmref{FreeVerbalSubgroupThm}) is a corollary.

\begin{corollary} \label{ContWordCor}

Let $F$ be a free profinite group on a finite set $X$, and let $H \leq_c F$ be a closed subgroup of infinite index in $F$. Denote by $\Phi$ the abstract subgroup of $F$ generated by $X$, and suppose that there is some $N \lhd_c F$ contained in $H$, such that $N \cap \Phi \neq \{1\}$. Then $H$ is a free profinite group of rank $\aleph_0$.

\end{corollary}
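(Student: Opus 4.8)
The plan is to reduce the statement to \corref{InfCor}: since its conclusion is exactly ours, it suffices to prove that $\rank(H \cap \Phi) = \infty$, i.e. that the abstract group $H \cap \Phi$ is not finitely generated. The two hypotheses beyond those of \corref{InfCor} --- that $H$ has infinite index in $F$, and that $H$ contains a closed normal $N$ with $N \cap \Phi \neq \{1\}$ --- will be used precisely to force this.

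I would first record two preliminary observations. (1) $[\Phi : H \cap \Phi] = \infty$: were this index a finite number $k$, then writing $\Phi$ as a union of $k$ left cosets of $H \cap \Phi$ with representatives in $\Phi$ and taking closures inside $F$ (using $\overline{\Phi} = F$ and that translations are homeomorphisms) would exhibit $F$ as a union of $k$ left cosets of $\overline{H \cap \Phi}$; thus $\overline{H \cap \Phi}$ would be a finite-index closed --- hence open --- subgroup of $F$ contained in $\overline{H} = H$, making $H$ open, against $[F : H] = \infty$. (The same remark disposes of the cases $\rank(F) \le 1$, a nontrivial subgroup of $\mathbb{Z}$ being of finite index; so I may assume $n := |X| \geq 2$.) (2) Fix a nontrivial $w \in N \cap \Phi$. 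Since $N$ is normal in $F$ and $N \leq H$, for every $g \in \Phi$ we have $gwg^{-1} \in N \subseteq H$ and $gwg^{-1} \in \Phi$, so the abstract normal closure $M := \langle\langle w \rangle\rangle_{\Phi}$ is a nontrivial normal subgroup of $\Phi$ with $M \leq H \cap \Phi$.

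It remains to prove the crux: a subgroup $B := H \cap \Phi$ of infinite index in the finite-rank free group $\Phi$ which contains a nontrivial normal subgroup $M$ of $\Phi$ cannot be finitely generated. Suppose it is, of rank $m \geq 1$ (note $w \in B \setminus \{1\}$). I would pass to the covering graph $\Gamma_B = B \backslash T$ of the $n$-petal rose, where $T$ is the Cayley graph of the abstract free group on $X$ (a $2n$-regular tree). Then $\Gamma_B$ is connected and locally finite, its vertex set is the set of cosets in $B \backslash \Phi$ --- hence infinite, by observation (1) --- and its first Betti number is $m < \infty$; so $\Gamma_B$ contains a finite, nonempty, connected subgraph $C_0$ (a ``core'') with $\Gamma_B \setminus C_0$ a forest. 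Being infinite, locally finite, and having finite core, $\Gamma_B$ has a vertex $v$ with $\mathrm{dist}(v, C_0) > \text{length}_X(w)$. Now $M \leq B$ says $gwg^{-1} \in B$ for every $g \in \Phi$, which translates exactly to: for every vertex $Bg$ of $\Gamma_B$, the walk spelling the (reduced, hence non-backtracking) word $w$ starting at $Bg$ is closed. Applying this at $v$ yields a reduced closed walk $\gamma$ based at $v$ of length $\text{length}_X(w)$. But $\gamma$ is reduced of positive length, so it cannot lie entirely in the forest $\Gamma_B \setminus C_0$ (a reduced closed walk in a forest is trivial); hence $\gamma$ meets $C_0$, and since before its first visit to $C_0$ and after its last visit to $C_0$ it stays in $\Gamma_B \setminus C_0$, its length is at least $2\,\mathrm{dist}(v, C_0) > 2\,\text{length}_X(w)$ --- absurd, as $\text{length}_X(w) \geq 1$. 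This contradiction gives $\rank(H \cap \Phi) = \infty$, and \corref{InfCor} finishes the proof.

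I expect the only real obstacle to be this last step --- the group-theoretic fact that a finitely generated subgroup of infinite index in a finite-rank free group contains no nontrivial normal subgroup of the ambient group. The reduction to \corref{InfCor} and the index computation are routine bookkeeping; all the content sits in this ``trivial normal core'' phenomenon, which the covering-graph argument above proves from scratch, but which could equally be quoted as a classical fact about free groups.
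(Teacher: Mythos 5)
Your proposal is correct and follows the same overall route as the paper: establish $[\Phi : H\cap\Phi]=\infty$, observe that $H\cap\Phi$ contains a nontrivial normal subgroup of $\Phi$, deduce $\rank(H\cap\Phi)=\infty$, and conclude via \corref{InfCor}. The one genuine difference is the key step: the paper simply cites the theorem of Karrass and Solitar (a subgroup of infinite index in a free group containing a nontrivial normal subgroup has infinite rank), whereas you prove the needed case from scratch with a Stallings-core covering-graph argument; that argument is sound (the far-away vertex forces a reduced closed walk of length $\mathrm{length}_X(w)$ to travel at least $2\,\mathrm{dist}(v,C_0)$, a contradiction), and it buys self-containedness at the cost of length, exactly as you anticipate in your closing remark. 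Your derivation of $[\Phi:H\cap\Phi]=\infty$ by taking closures of finitely many cosets is also a valid, slightly more hands-on substitute for the paper's appeal to $[\Phi:U\cap\Phi]=[F:U]$ for open $U\supseteq H$ of large index.
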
 

\begin{proof}

Since $[F : H] = \infty$, there is some $H \leq U \leq_o F$ of arbitrarily high index (see \cite[Proposition 2.1.4 (d)]{RZ}). Hence, $$[\Phi : H \cap \Phi] \geq [\Phi : U \cap \Phi] = [F : U]$$ by \cite[Proposition 3.2.2 (a),(d)]{RZ}. Therefore, $[\Phi : H \cap \Phi] = \infty$, $N \cap \Phi \leq H \cap \Phi$, and $\{1\} \neq N \cap \Phi \lhd \Phi$, so by a theorem of Karrass and Solitar (see \cite{KS}), $\rank(H \cap \Phi) = \infty$, and we apply \corref{InfCor}. $\blacksquare$

\end{proof}

A proof of the free intermediate subgroup theorem is given (see \thmref{FreeInterSubThm}).

\begin{theorem} \label{RouteThm}

Let $F$ be a free profinite group of finite rank \mbox{exceeding $1$,} and let $H \leq_c F$ be a closed subgroup of infinite index in $F$. Then there exists a free profinite subgroup $H \leq L \leq_c F$ of rank $\aleph_0$.

\end{theorem}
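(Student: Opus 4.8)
The plan is to build $L$ as a closed subgroup of $F$ containing $H$ that simultaneously satisfies the hypothesis of \corref{ContWordCor} (relative to a suitable open overgroup playing the role of ``$F$'') and has infinite index in that overgroup, so that \corref{ContWordCor} delivers that $L$ is free profinite of rank $\aleph_0$. The key point is that \corref{ContWordCor} needs $L$ to contain a nontrivial closed normal subgroup $N \lhd_c F'$ of some finitely generated free $F'$ with $N$ meeting the abstract free subgroup nontrivially; so I want to interpose between $H$ and $F$ an open subgroup $F' = U$ whose abstract free part contains an element $w \neq 1$ whose normal closure in $U$ still lies in $L$.

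First I would observe that since $[F:H] = \infty$, I can (by \cite[Proposition 2.1.4 (d)]{RZ}) pick an open subgroup $H \leq U \leq_o F$; because $\rank(F) \geq 2$ and $U$ is open of finite index, $\rank(U) \geq 2$ as well (by the Nielsen--Schreier index formula for the abstract free parts, via \propref{AbstProfBasisProp}), and $[U:H]=\infty$ still. Next, fix a basis $X$ of $U$ and let $\Phi$ be the abstract free subgroup it generates; choose a nontrivial word $w \in \Phi$ — say $w = x_1$ if $x_1 \in H$, but more robustly I want $w$ to lie in $H$. Here is the crux: I need some open $U$ and some $1 \neq w \in H \cap \Phi_U$. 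Since $H$ has infinite index, $H$ is contained in open subgroups of arbitrarily large index, and in such a $U$ one can arrange, using \propref{BasesProp} repeatedly to rewrite a generating element of $H$ in a basis of $U$, that $H$ actually contains a basis element of $U$ (this is essentially the mechanism of \corref{ChangeVarCor}: one pushes rank from $H \cap \Phi$ into a basis of an open overgroup). Concretely, pick any $1 \neq h \in H$ lying in the abstract free part of $F$; then $h$ lies in the abstract free part of every open $U \ni H$, and by choosing $U$ appropriately and applying \propref{BasesProp} one obtains an open $H \leq U \leq_o F$ together with a basis of $U$ one of whose elements, call it $x_1$, lies in $H$.

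Now set $N$ to be the closed normal closure of $x_1$ in $U$, and let $L = \overline{\langle H, N\rangle} \leq_c U \leq_c F$. Then $N \lhd_c U$, $N \leq L$, and $N$ meets the abstract free part of $U$ nontrivially (it contains $x_1$), so the hypotheses of \corref{ContWordCor} with the ambient group $U$ in place of $F$ are met — provided $[U:L] = \infty$. This last fact is the main obstacle: adding the normal closure of a single basis element to $H$ could in principle blow $L$ up to a finite-index subgroup of $U$, or even all of $U$. To control this I would instead choose $U$ of index large compared to anything relevant and take $N$ to be the normal closure not of $x_1$ but of a high power $x_1^{m}$, or of a long word in the $x_i$ chosen to lie deep inside $H$; since $U/N$ is then an infinite group (the free profinite group on $\geq 2$ generators modulo the normal closure of one element of the form $x_1^m$, or modulo a single relator with the other generators free, is infinite — indeed it surjects onto the free profinite group on the remaining basis elements, of rank $\geq 1$), and since one can further arrange $L/N$ to have infinite index in $U/N$ by a counting/rank argument analogous to \propref{GaschProp}, we get $[U:L] = \infty$. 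Granting that, \corref{ContWordCor} applies to $H \leq L \leq_c U$ and yields that $L$ is free profinite of rank $\aleph_0$; and $L \leq_c F$, so this proves \thmref{RouteThm}. I expect the delicate bookkeeping to be exactly the simultaneous arrangement ``$x_1 \in H$, $x_1$ (or $x_1^m$) part of a basis of $U$, and $[U : \overline{\langle H,N\rangle}] = \infty$'', which is where \propref{BasesProp} and a rank count must be combined carefully.
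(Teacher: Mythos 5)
There are two genuine gaps here, and the second one is fatal to the approach as sketched. First, you assert that one can ``pick any $1 \neq h \in H$ lying in the abstract free part of $F$.'' The abstract subgroup $\Phi$ generated by a basis is countable, while $H$ is an arbitrary closed subgroup of infinite index; nothing forces $H \cap \Phi \neq \{1\}$, and for a generic $H$ this intersection is trivial. Consequently the mechanism of \corref{ChangeVarCor} (which needs $\rank(H \cap \Phi) \geq n$) is unavailable, and you cannot in general arrange a basis element of an open overgroup to lie in $H$. (This particular step is actually unnecessary for your construction: if $N$ is the closed normal closure in $U$ of \emph{any} basis element $x_1$ and $L = \overline{\langle H, N\rangle}$, then automatically $N \leq L$ and $x_1 \in N \cap \Phi_U$, so the hypothesis of \corref{ContWordCor} about meeting the abstract free part is free of charge.)

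The real obstacle, which you name but do not overcome, is $[U : L] = \infty$. With $N$ the closed normal closure of $x_1$, or of $x_1^m$, or of any fixed word $w$, the quotient $U/N$ is a fixed profinite group, and nothing prevents the image of $H$ in $U/N$ from being open or even all of $U/N$, in which case $L = \overline{\langle H,N\rangle}$ has finite index in $U$. For instance, an infinite-index $H$ can surject onto the abelianization of every open $U \supseteq H$, which already defeats the natural choice $w = [x_1,x_2]$; for procyclic quotients such as $U/\langle x_1\rangle^U$ with $\rank(U)=2$ the image of $H$ can likewise be everything. Your proposed remedies (high powers, long words, ``a counting/rank argument analogous to \propref{GaschProp}'') are not arguments: \propref{GaschProp} controls numbers of generators, not the index of $HN$ in $U$, and you give no reason why \emph{some} admissible pair $(U,w)$ with $[U:\overline{\langle H, \langle w\rangle^U\rangle}]=\infty$ must exist. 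This is precisely the difficulty the paper avoids: its proof of \thmref{RouteThm} never invokes \corref{ContWordCor}. Instead it inductively builds a strictly descending chain of open subgroups $F_n$ and an ascending chain $H \leq H_n \leq F_n$, at each stage either descending harmlessly or adjoining to $H_{n-1}$ many basis elements of a deeper open subgroup lying in the kernel of a problematic epimorphism (via \propref{GaschProp}, \propref{BasesProp}, \propref{SimpleSolProp}, \propref{ContBaseProp}), so that $L = \bigcap_n F_n$ has every finite embedding problem properly solvable; freeness of rank $\aleph_0$ then follows from Iwasawa's criterion \cite[Corollary 24.8.3]{FJ}. If you want to salvage your route, the burden is to prove the existence of a pair $(U,w)$ as above, and that appears to be essentially as hard as the theorem itself.
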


\begin{proof}

We inductively construct an ascending sequence of closed subgroups $\{H_n\}_{n \in \mathbb{N}}$, and a strictly descending sequence of open subgroups $\{F_n\}_{n \in \mathbb{N}}$ subject to:

\begin{enumerate}

\item $H_0 = H$, and $F_0 = F$.

\item $H_n \leq F_n$, and $[F_n : H_n] = \infty$.

\item Any finite embedding problem $$\begin{tikzpicture}[scale=1.5]
\node (B) at (1,1) {$K$};
\node (C) at (0,0) {$A$};
\node (D) at (1,0) {$B$};
\path[->,font=\scriptsize,>=angle 90]
(B) edge node[right]{$\theta$} (D)
(C) edge node[above]{$\alpha$} (D);
\end{tikzpicture}$$with $|A| \leq n$, and infinite index subgroup $H_n \leq K \leq_c F_n$ is properly solvable.

\end{enumerate}

\underline{\textbf{Part A: Inductive construction}} 

Since our conditions are satisfied for $n = 0$, we pick some positive $n \in \mathbb{N}$, and assume that $F_{n-1}$ and $H_{n-1}$ have already been defined. We can pick some $H_{n-1} \leq K_0 \leq_c F_{n-1}$ with $[F_{n-1} : K_0] = \infty$ and an embedding problem $$\begin{tikzpicture}[scale=1.5]
\node (B) at (1,1) {$K_0$};
\node (C) at (0,0) {$A_0$};
\node (D) at (1,0) {$B_0$};
\path[->,font=\scriptsize,>=angle 90]
(B) edge node[right]{$\theta_0$} (D)
(C) edge node[above]{$\alpha_0$} (D);
\end{tikzpicture}$$ with $|A_0| \leq n$, which is not properly solvable, for if such a choice cannot be made, we set $H_n = H_{n-1}$ and take $F_n$ to be any proper open subgroup of $F_{n-1}$ containing $H_n$ (this is possible in view of (2)), thus finishing our inductive construction since (2) and (3) are satisfied. Extend $\theta_0$  to some $$K_0 \leq U \leq_o F_{n-1}$$ using \cite[Lemma 1.2.5]{FJ}, and note that since $[U : K_0] = \infty$ we may well assume that $m = \text{rank}(U) \geq 2n$, and that $U \neq F_{n-1}$ as it is possible to switch $U$ by some proper open subgroup, thus arbitrarily increasing the rank in view of \mbox{\cite[Theorem 3.6.2 (b)]{RZ}}. It follows from \mbox{\propref{GaschProp}}, and from the inequalities $$d(B_0) \leq d(A_0) \leq |A_0| \leq n$$ that there is a basis $\{u_1, \dots, u_m\}$ for $U$, such that \mbox{$\{u_{n+1}, \dots, u_{m}\} \subseteq \text{Ker}(\theta_0)$}. Set $$H_n = \langle K_0, u_{n+1}, \dots, u_{m} \rangle, \hspace{1mm} F_n = U.$$ It is clear that $H_{n-1} \leq H_n \leq_c F_n \lneq_o F_{n-1}$ so in order to finish our inductive construction, we need to verify conditions (2) and (3).

\underline{\textbf{Condition (2) holds:}}

Towards a contradiction, assume that $H_n \leq_o U$. By \mbox{\propref{BasesProp} (2)}, there is a basis $$C = \{c_1, \dots, c_n, u_{n+1}, \dots, u_{m} \}$$ for $H_n$. Note that the restriction of $\theta_0$ to $H_n$ is surjective since \mbox{$K_0 \leq H_n$.} Denote by $\lambda \colon \{u_{n+1}, \dots, u_m \} \rightarrow A_0$ the function which equals $1 \in A_0$ identically. It follows from \propref{SimpleSolProp} (applied to $H_n$, $m \geq 2n \geq |A|$, and $\lambda$), that the embedding problem $$\begin{tikzpicture}[scale=1.5]
\node (B) at (1,1) {$H_n$};
\node (C) at (0,0) {$A_0$};
\node (D) at (1,0) {$B_0$};
\path[->,font=\scriptsize,>=angle 90]
(B) edge node[right]{$\theta_0$} (D)
(C) edge node[above]{$\alpha_0$} (D);
\end{tikzpicture}$$ has a proper solution $\psi \colon H_n \rightarrow A_0$ such that $\{u_{n+1}, \dots, u_{m}\} \subseteq \text{Ker}(\psi)$. Now, $A = \psi(H_n) = \langle \psi(K_0), \psi(u_{n+1}), \dots, \psi(u_{m}) \rangle = \psi(K_0)$ so $\psi|_{K_0}$ contradicts our choice of $K_0$ and of the embedding problem.

\underline{\textbf{Condition (3) holds:}} 

Take a finite embedding problem $$\begin{tikzpicture}[scale=1.5]
\node (B) at (1,1) {$K$};
\node (C) at (0,0) {$A$};
\node (D) at (1,0) {$B$};
\path[->,font=\scriptsize,>=angle 90]
(B) edge node[right]{$\theta$} (D)
(C) edge node[above]{$\alpha$} (D);
\end{tikzpicture}$$ for an infinite index subgroup $H_n \leq K \leq_c F_n$, such that $|A| \leq n$. Clearly, $\{u_{n+1}, \dots, u_{m}\} \subseteq K$, and $m-n \geq 2n - n = n \geq |A|$ so a proper solution exists in view of \propref{ContBaseProp}. Thus, we have completed our induction.

\underline{\textbf{Part B: Constructing $L$}} 

Set $$L = \bigcap_{n \in \mathbb{N}} F_n$$ and note that $H_r \leq H_m \leq F_m \leq F_r$ for every $m,r \in \mathbb{N}$ with $m \geq r$. By omitting $H_m$ and taking the intersection over all $m \geq r$, we conclude that $H = H_0 \leq H_r \leq L \leq F_r$. Since $L$ is the intersection of a strictly descending series, for all $r \in \mathbb{N}$ we have $[F_r : L] = \infty$ so every finite embedding problem for $L$ is properly solvable in view of (3). Our assertion is now a consequence of \cite[Corollary 24.8.3]{FJ}. $\blacksquare$

\end{proof}

\section{An action of a compact group}

First, a little remark regarding our notation is in place. For an element $x \in \prod_i A_i$ we write $x_i$ for its $i$-th coordinate, and we identify $A_i$ with the subgroup of all elements $x$ with $x_j=1$, for all $j\neq i$. 

The following lemma will provide us with an abundance of finite normal subgroups, which are useful for proving profinite freeness. Intrinsically, it shows that in some special case, the action of a compact group disassembles into finite actions.

\begin{lemma} \label{CompactLem}

Let $G$ be a compact group acting continuously, by automorphisms, on a nontrivial product of nonabelian finite simple groups $$M = \prod_{i \in I}S_{i}$$ equipped with the product topology. Then there is a partition of $I$ into finite subsets $$I = \bigcup_{j \in J} I_{j}$$ such that for each $j \in J$ $$\prod_{i \in I_j}S_i$$ is $G$-invariant, and a power of a simple group.

\end{lemma}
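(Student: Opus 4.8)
Here is my plan for proving \lemref{CompactLem}.

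\medskip

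The plan is to analyze the $G$-action on the set $I$ of indices via the way $G$ permutes the "component subgroups" $S_i \leq M$. First I would observe that each $S_i$, viewed as the subgroup of $M$ of elements supported on the single coordinate $i$, is a minimal closed normal subgroup of $M$ (it is a nonabelian finite simple direct factor), and the $S_i$ are exactly the minimal closed normal subgroups of $M$. Since $G$ acts by automorphisms, it must permute the set $\{S_i : i \in I\}$, giving an action of $G$ on $I$ itself. The orbits of this action give a partition $I = \bigcup_{j \in J} I_j$, and for each orbit $I_j$ the product $\prod_{i \in I_j} S_i$ is manifestly $G$-invariant; moreover all the $S_i$ with $i$ in one orbit are $G$-conjugate, hence isomorphic to a common finite simple group $S$, so $\prod_{i \in I_j} S_i \cong S^{|I_j|}$ is a power of a simple group. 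The only thing left to prove is that each orbit $I_j$ is \emph{finite}.

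\medskip

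To get finiteness of the orbits I would use continuity together with compactness of $G$. The action $G \times M \to M$ is continuous, and $M$ carries the product topology, so for each fixed $i$ the map $G \to M$, $g \mapsto g \cdot s$ (for a chosen nontrivial $s \in S_i$) is continuous; composing with the projection to coordinate $i$ and using that $\{1\} \times \prod_{k \neq i} S_k$ is open in $M$, one sees that the stabilizer in $G$ of the coordinate $i$ — more precisely the subgroup $G_i = \{g \in G : g \cdot S_i = S_i\}$ — is open in $G$. Indeed, $g \cdot S_i = S_{\sigma(g)(i)}$ where $\sigma\colon G \to \mathrm{Sym}(I)$ is the permutation action, and $\{g : \sigma(g)(i) = i\}$ is the preimage of the clopen set $\{x \in M : x \text{ is supported away from }i \text{ or equals } g\cdot s\}$ ... — cleaner: pick $1 \neq s \in S_i$; then $g \in G_i$ iff $g\cdot s$ is supported on coordinate $i$, and "supported on coordinate $i$" is a clopen condition in the product topology, so $G_i$ is clopen, in particular open. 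By compactness of $G$, the open subgroup $G_i$ has finite index, and the orbit of $i$ is in bijection with $G/G_i$, hence finite. This holds for every $i$, so every orbit is finite.

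\medskip

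I expect the main obstacle to be the bookkeeping in the previous step — namely verifying carefully that $\{S_i\}$ really is the full set of minimal closed normal subgroups of $M$ (so that $G$ is forced to permute them, which is what makes the whole argument go through), and then pinning down exactly why the stabilizer $G_i$ is open in the product topology rather than just in the box topology. The first point is a standard fact about (possibly infinite restricted) products of nonabelian finite simple groups, but one must be a little careful: a minimal closed normal subgroup $R \lhd_c M$ must meet some $S_i$ nontrivially, hence contain $S_i$ by simplicity and normality, and then minimality forces $R = S_i$; conversely each $S_i$ is clearly minimal closed normal. The second point is exactly where continuity of the action and the product (not box) topology on $M$ are used, and it is the reason compactness of $G$ enters. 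Once both are in hand the partition into orbits does everything the lemma asks for. (This is also the step for which I would thank a colleague for the topological input.)
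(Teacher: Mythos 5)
Your overall approach is essentially the same as the paper's: argue that $G$ permutes the set $\{S_i\}_{i\in I}$ (hence acts on $I$), partition $I$ into $G$-orbits, and use compactness plus continuity to show each orbit is finite because the stabilizer of each index is an open subgroup. Your route to the permutation action (via the $S_i$ being exactly the minimal closed normal subgroups of $M$) is a minor repackaging of the fact the paper pulls from \cite[Lemma 8.2.4]{RZ}, and is fine.

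However, there is a genuine topology error in the ``cleaner'' version of your openness argument. You write that $g \in G_i$ iff $g\cdot s$ is \emph{supported on coordinate $i$}, and that ``supported on coordinate $i$'' is a clopen condition in the product topology. That last claim is false when $I$ is infinite: the set $\{x \in M : x_j = 1 \text{ for all } j \neq i\} = S_i$ is closed but \emph{not} open, since a basic open set in the product topology may constrain only finitely many coordinates, while ``supported on $i$'' constrains all of them. If you used the box topology it would be open, but $M$ carries the product topology, which is exactly the point you correctly flagged as delicate and then got backwards. The correct clopen condition is the \emph{single-coordinate} one: $\{x \in M : x_i \neq 1\}$ is clopen (it constrains only the $i$-th coordinate, and $S_i$ is finite and discrete). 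Since you have already established that $g\cdot S_i$ is always some $S_t$, one has $g \in G_i$ if and only if $(g\cdot s)_i \neq 1$, and the preimage of the clopen set $\{x : x_i \neq 1\}$ under the continuous map $g \mapsto g\cdot s$ is then an open (hence finite-index) subgroup. This is precisely what the paper does. Your earlier, messier attempt in terms of $\{1\}\times\prod_{k\neq i}S_k$ was pointing at the right set; the ``cleaner'' rewrite replaced it with the wrong one. With that substitution your proof is correct and matches the paper's.
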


\begin{proof}

Let $i \in I$, $g \in G$, and $1 \neq x \in S_i$. Clearly $S_i \lhd M$, so since $G$ acts by continuous automorphisms, $gS_i \lhd M$ is isomorphic to $S_i$. Since at least one of the projections of $gS_i$ defined by $I$ must be nontrivial, it follows from \cite[Lemma 8.2.4]{RZ} that there exists some $t \in I$ such that $gS_i = S_t$. Therefore, $G$ acts as a permutation group on $I$. Furthermore, from the same lemma, we deduce that:
$$G_{S_i} = \{h \in G : hS_i = S_i\} = \{h \in G : (hx)_i \neq 1\}.$$

Since $\{m \in M : m_i \neq 1\}$ is open, the continuity of the action of $G$ on $M$ guarantees that $$V = \{(a,m) \in G \times M : (am)_i \neq 1\}$$ is open as well. Since $(1,x) \in V$, the definition of the product topology provides us with an open neighborhood $U$ of $1$ in $G$ such that \mbox{$U \times \{x\} \subseteq V$}, which means that $U \subseteq G_{S_i}$. Clearly, $G_{S_i} = UG_{S_i}$ is an open subgroup of $G$. The cosets of $G_{S_i}$ form a disjoint open covering of the compact group $G$, so $[G : G_{S_i}]$ is finite. It follows that the $G$-orbit of $i$ is finite, so partitioning $I$ into its $G$-orbits gives us the result. $\blacksquare$

\end{proof}

In the following proposition, we achieve profinite freeness using the finite normal subgroups guaranteed to exist by the properties of the action of a compact group as proved in the previous lemma.

\begin{proposition} \label{PushingUpProp}

Let $F$ be a free profinite group of rank $m$, and set $m^* = \max\{m,\aleph_0\}$. Let $N \lhd_c F$, put $R = F/N$, and let $N \leq K \leq_{c} F $ be a closed subgroup of infinite index in $F$. Suppose that there is a closed normal subgroup $N \leq L \lhd_c F$ not contained in $K$, such that $L/N$ is a direct product of nonabelian finite simple groups. Then $K$ is a free profinite group of rank $m^*$.

\end{proposition}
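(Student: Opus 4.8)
The plan is to show that every finite embedding problem for $K$ is properly solvable, so that profinite freeness of $K$ follows from Iwasawa's criterion \cite[Corollary 24.8.3]{FJ}; the rank count $m^*$ will then come from the fact that $K$ has infinite index (hence $d(K) \geq \aleph_0$ by the index-growth argument already used in the proof of \thmref{RouteThm}) together with $d(K) \leq d(F) = m$ when $m$ is infinite, and, in the finitely generated case, from the observation that $K$ contains an infinite-index closed subgroup of a free profinite group which already has infinite rank, forcing $d(K)=\aleph_0$. So the heart of the matter is proper solvability of embedding problems.

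First I would feed the $G=K$-conjugation action on $M := L/N$, a product of nonabelian finite simple groups, into \lemref{CompactLem}. Wait — the action that is relevant is the conjugation action of $F$ (or of $F/N = R$) on $M$, which is continuous by automorphisms; applying \lemref{CompactLem} to $G = F$ gives a partition of the index set into finite blocks $I_j$ with each $M_j := \prod_{i\in I_j}S_i$ being $F$-invariant and a power of a single finite simple group $S^{(j)}$. Pulling back, we get closed normal subgroups $N \leq L_j \lhd_c F$ with $L_j/N \cong M_j$, each finite modulo $N$ and $F$-invariant, and $L = \overline{\langle L_j : j\rangle}$ (a product over $N$). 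Since $L \not\leq K$, some block $L_{j_0}$ satisfies $L_{j_0}\not\leq K$. Replacing $L$ by $L_{j_0}$ we may assume $L/N$ is a \emph{finite} nonabelian characteristically-simple group, i.e. a power $S^k$ of a nonabelian finite simple group, with $L$ normal in $F$ and $L\not\leq K$.

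Now $L \cap K \lhd_c K$ (indeed $L\cap K$ is normalized by $L$ since $L/N$ centralizes... not quite — but $LK$ is a group since $L\lhd F$, and $L\cap K \lhd LK \supseteq K$), and $L/(L\cap K) \cong LK/K$ is a nontrivial quotient of $L/N = S^k$, hence is itself a nontrivial power of $S$; call it $S^\ell$ with $1\leq \ell \leq k$. The key point is that $S^\ell$, being a product of nonabelian finite simple groups, is its \emph{own} minimal normal, and more importantly it gives a "large" finite normal subgroup of $K/(L\cap K)$ on which one can solve embedding problems by the standard trick: given a finite embedding problem $\alpha\colon A \to B$, $\theta\colon K\to B$, one uses that $K$ surjects onto $B \times S^\ell$ (via $\theta$ and the map $K \to K/(L\cap K) \cong S^\ell$ composed with a projection) and a counting/pigeonhole argument — exactly in the spirit of \propref{SimpleSolProp} — to lift $\theta$ across $\alpha$ after possibly enlarging $\ell$ by passing to a deeper block or iterating \lemref{CompactLem}. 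Concretely: if $|\mathrm{Ker}(\alpha)|$ divides $|S|^{?}$... the cleanest route is Haran's twisted-wreath-product / "diamond" machinery, but a direct argument is: since there are infinitely many $F$-invariant blocks (or one can take $L$ large by the hypothesis allowing $L/N$ an infinite product — here one may have to invoke that $F/N$ large forces many such blocks), we may assume $\ell$ is as large as we wish, in particular $\ell \geq d(A)$, and then the existence of a proper solution reduces to lifting a homomorphism into $B$ along $\alpha$ using the free-like behavior of $S^\ell$ as a normal subgroup — this is where I would cite or reprove the relevant lemma on solving embedding problems with nonabelian-simple kernel (cf. \cite[Lemma 8.2.4]{RZ} and the wreath-product lemmas in \cite{FJ}).

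The main obstacle, and the step I expect to require the most care, is the last one: controlling the size $\ell$ of the simple-power quotient $LK/K$ so that it dominates the kernel $\mathrm{Ker}(\alpha)$ of an arbitrary finite embedding problem. If $L/N$ is genuinely a \emph{finite} product $S^k$ with $k$ fixed, then $\ell \leq k$ is bounded and a single application will not solve all embedding problems; one must exploit that $N \leq K$ has infinite index and that $R = F/N$ is otherwise unconstrained to produce, via repeated use of \lemref{CompactLem} on conjugates or on a descending chain of open subgroups (as in \thmref{RouteThm}), an \emph{unbounded} supply of invariant simple-power normal subgroups escaping $K$. Marrying this "pushing up" with the pigeonhole solvability of \propref{SimpleSolProp}–\propref{ContBaseProp} — possibly by first reducing, through \thmref{FreeInterSubThm}, to a situation where $K$ already sits inside a countably generated free group — is the delicate part; everything else (applying \lemref{CompactLem}, taking $L\cap K$, reading off the rank) is routine.
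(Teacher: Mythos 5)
Your opening move is exactly the paper's: apply \lemref{CompactLem} to the conjugation action of $R=F/N$ on $L/N$, decompose $L/N$ into finite $R$-normal (hence $F$-normal, after pulling back) blocks, and pick one block not contained in $K$; this produces a closed normal subgroup $N \lneq M \lhd_c F$ with $1<[M:N]<\infty$ and $M \not\leq K$. From there, however, the argument does not close, and you say so yourself. Trying to properly solve \emph{all} finite embedding problems for $K$ using the single finite quotient $LK/K \cong S^{\ell}$ cannot work: $\ell$ is bounded while the kernels of the embedding problems are not, and nothing in the hypotheses supplies an unbounded family of invariant simple-power blocks escaping $K$ --- $L/N$ may perfectly well be a single finite block $S^{k}$. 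The remedies you sketch (iterating \lemref{CompactLem} on conjugates or on descending chains, or first routing through \thmref{RouteThm}) are not carried out, and there is no evident way to make them manufacture such a family out of thin air.

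The missing ingredient is that once you have the finite block $M$, you are already done by the profinite Karrass--Solitar theorem of Jarden. Indeed $K$ is a \emph{proper open} subgroup of $MK$, since $MK/K \cong M/(M\cap K)$ is a nontrivial finite group (using $N\leq K$ and $[M:N]<\infty$), while $M \lhd_c F$ is not contained in $K$ and $[F:K]=\infty$; then \cite[Proposition 1.3]{Ja06} and the remark following it give directly that $K$ is free profinite of rank $m^{*}$. No embedding problems need to be solved by hand: the whole point of the ``pushing up'' is to trade the possibly infinite normal subgroup $L$ for a finite-over-$N$ normal subgroup $M$ so that Jarden's theorem applies. So: right decomposition, but a genuine gap in the second half, which should be filled by citing \cite{Ja06} rather than by a pigeonhole/embedding-problem argument.
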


\begin{proof}

It is apparent from our assumptions that $L/N \lhd_c R$, so $R$ acts by continuous automorphisms on $L/N$ via conjugation. By \mbox{\lemref{CompactLem}}, $L/N$ is a direct product of $R$-invaraint, thus $R$-normal, finite subgroups. Since $K/N$ does not contain $L/N$, it does not contain one of the aforementioned finite normal subgroups. That is, there is some closed subgroup of $L$, $N \lneq M \lhd_c F$ not contained in $K$, with $1 < [M : N] < \infty$. Therefore, $K$ is a proper open subgroup of $MK$ not containing $M \lhd_c F$, so $K$ is a free profinite group of rank $m^*$ by \cite[Proposition 1.3]{Ja06} and the remark following it. $\blacksquare$

\end{proof}

\section{Groups of finite length}

We recall the definition of a class of profinite groups defined in \cite{BFW}.

\begin{definition}

Let $G$ be a profinite group. We define the \textbf{\textit{generalized derived subgroup}} $D(G)$ of $G$ to be the intersection of all open normal subgroups $M \lhd_o G$ for which $G/M$ is either an abelian group or a nonabelian finite simple group. The \textbf{\textit{generalized derived series}} is defined inductively: $D_0(G) = G$ and $D_{i+1}(G) = D(D_{i}(G))$ for $i \geq 0$. Plainly, this is a strictly descending series of closed normal subgroups of $G$ (see \cite[Lemma 2.1]{BFW}). Moreover, we define the \textbf{\textit{abelian-simple length}} of $G$, denoted by $l(G)$, to be the smallest $r \in \mathbb{N}$ for which $D_{r}(G) = \{1\}$. In the absence of such a natural number, we set $l(G) = \infty$. 

\end{definition} \label{FSLDef}

The following claims give some basic properties related to abelian-simple length, and generalized derived subgroups.  

\begin{proposition} \label{FinLindexProp}

Let $G$ be a profinite group with $l(G) < \infty$, and let $U \leq_o G$. Then $l(U) < \infty$.

\end{proposition}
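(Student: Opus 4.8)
The plan is to reduce the statement to one structural fact about the generalized derived functor $D$ — namely that $D(A)=\{1\}$ whenever $A$ is a \emph{closed normal} subgroup of a profinite group $H$ with $D(H)=\{1\}$ — and then to run an ``intersect and refine'' argument along the generalized derived series of $G$. The only place that requires genuine care is this fact: for a merely closed subgroup $A\leq_c H$ it is false (for instance $S_3\leq A_5$ has $D(S_3)=A_3\neq\{1\}$ while $D(A_5)=\{1\}$), so the normality of $A$ has to be used essentially — it is exactly what will force $AM/M$ to be all of $H/M$ below.

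First I would record two easy properties of $D$. For a continuous epimorphism $\pi\colon G\twoheadrightarrow\overline{G}$ one has $\pi(D(G))\leq D(\overline{G})$, since the preimage under $\pi$ of an open normal subgroup of $\overline{G}$ with abelian or nonabelian-finite-simple quotient is an open normal subgroup of $G$ of the same sort. And $D(G/D(G))=\{1\}$: the subgroups $M\lhd_o G$ with $G/M$ abelian or nonabelian finite simple all contain $D(G)$ by definition, so they are precisely the open normal subgroups containing $D(G)$ with a quotient of that kind, and their intersection is $D(G)$. For the key fact, let $D(H)=\{1\}$ and $A\lhd_c H$; given $1\neq a\in A$, choose $M\lhd_o H$ with $H/M$ abelian or nonabelian finite simple and $a\notin M$. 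Then $A\cap M\lhd_o A$, and $A/(A\cap M)\cong AM/M$ is a normal subgroup of $H/M$ because $A\lhd H$. If $H/M$ is abelian so is $AM/M$; if $H/M$ is nonabelian finite simple then $AM/M$ is a normal subgroup containing the nontrivial element $aM$, hence equals $H/M$. Either way $A\cap M$ is an open normal subgroup of $A$ with abelian or nonabelian-finite-simple quotient not containing $a$; as $a$ was arbitrary, $D(A)=\{1\}$.

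Next I would prove a \textbf{stacking lemma}: if a profinite group $P$ admits a finite chain of closed subgroups $\{1\}=P_m\lhd P_{m-1}\lhd\dots\lhd P_0=P$ with $D(P_i/P_{i+1})=\{1\}$ for all $i$, then $l(P)\leq m$. This follows by showing $D_k(P)\leq P_k$ by induction on $k$: the case $k=0$ is trivial, and if $D_k(P)\leq P_k$ then $D_k(P)\lhd P_k$ (since $D_k(P)\lhd P$), so its image in $P_k/P_{k+1}$ is a closed normal subgroup of a group with trivial $D$ and hence has trivial $D$ by the key fact; by the first property of $D$ this forces $D_{k+1}(P)=D(D_k(P))\leq P_{k+1}$.

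Finally, for the proposition itself: as $[G:U]<\infty$, the core $U_0=\bigcap_{g\in G}U^g$ is open and normal in $G$. Put $r=l(G)$ and $N_i=U_0\cap D_i(G)$; each $N_i$ is closed and normal in $G$, with $N_0=U_0$ and $N_r=\{1\}$, and $N_i/N_{i+1}\cong N_iD_{i+1}(G)/D_{i+1}(G)$ is a closed normal subgroup of $D_i(G)/D_{i+1}(G)=D_i(G)/D(D_i(G))$, which has trivial $D$ by the second property above; so $D(N_i/N_{i+1})=\{1\}$ by the key fact, and the stacking lemma gives $l(U_0)\leq r$. Since $U_0\lhd_o U$ with $U/U_0$ finite, $l(U/U_0)<\infty$; pulling the generalized derived series of $U/U_0$ back to a chain of closed subgroups from $U_0$ up to $U$ — whose successive quotients have trivial $D$ by the second property — and splicing it on top of the chain $\{1\}=N_r\lhd\dots\lhd N_0=U_0$ produces a finite chain of closed subgroups $\{1\}\lhd\dots\lhd U$ with every successive quotient of trivial $D$; a last application of the stacking lemma yields $l(U)\leq l(U/U_0)+r<\infty$.
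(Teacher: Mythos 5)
Your proof is correct and follows essentially the same route as the paper's: pass to the core $U_0=U_G\lhd_o G$ and combine subadditivity of $l$ along a normal chain with monotonicity for closed normal subgroups and finiteness of $l$ for finite groups, i.e.\ $l(U)\leq l(U/U_0)+l(U_0)\leq l(U/U_0)+l(G)$. The only difference is that the paper obtains these three ingredients by citing \cite[Proposition 2.9, Lemma 2.2, Lemma 2.7 (2)]{BFW}, whereas you supply self-contained proofs of them (your ``key fact'' and ``stacking lemma''), which is a valid, if longer, substitute.
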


\begin{proof}

Set $N = U_G \lhd_o G$ (the intersection of all the $G$-conjugates of $U$). By \cite[Proposition 2.9, Lemma 2.2, Lemma 2.7 (2)]{BFW}: $$l(U) \leq l(U/N) + l(N) \leq \text{log}_2(|U/N|) + l(G) < \infty.$$ $\blacksquare$

\end{proof}

\begin{proposition} \label{abnonabProp}

Let $G$ be a profinite group, and set $M = G/D(G)$. Then there are characteristic subgroups $A,P \leq_c M$ with $A$ abelian and $P$ a product of nonabelian finite simple groups, such that $M = A \times P$.

\end{proposition}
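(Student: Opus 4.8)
The plan is to analyze the structure of $M = G/D(G)$ using the very definition of $D(G)$: it is the intersection of all open normal subgroups $M' \lhd_o G$ with $G/M'$ either abelian or nonabelian finite simple. So $M$ is a (closed) subgroup of the product $\prod_{M'} G/M'$ over all such $M'$, and this realizes $M$ as a subdirect product of abelian groups and nonabelian finite simple groups. Equivalently, $M$ has a collection of closed normal subgroups whose intersection is trivial and whose quotients are each abelian or nonabelian finite simple; I would phrase the argument in terms of these quotients. Let $A$ be the intersection of the kernels of all maps from $M$ onto nonabelian finite simple groups, and let $P$ be the intersection of the kernels of all maps from $M$ onto abelian groups (so $P = \overline{[M,M]}$, the closure of the commutator subgroup). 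Both $A$ and $P$ are characteristic in $M$ since they are defined without reference to any choice. By construction $M/A$ is a subdirect product of nonabelian finite simple groups and $M/P = M^{\mathrm{ab}}$ is abelian. Since the defining quotients of $M$ separate points, $A \cap P = \{1\}$.

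The crux is to show $M = A \times P$, i.e. that $AP = M$ and that $A,P$ centralize each other — actually, since $A \cap P = \{1\}$ and both are normal, $[A,P] \subseteq A \cap P = \{1\}$ is automatic, so it remains to prove $AP = M$, or equivalently that $M \to M/A \times M/P$ is surjective. I expect this to be the main obstacle, and I would handle it via \lemref{CompactLem} (or directly via \cite[Lemma 8.2.4]{RZ}). The point is that $M/A$, being a subdirect product of nonabelian finite simple groups on which $M$ (hence $M/A$ acting on itself, but more to the point the ambient structure) acts, is genuinely an \emph{unrestricted direct product} $\prod_{i} S_i$ of nonabelian finite simple groups: in a subdirect product of nonabelian finite simple groups, the projection to any finite subset of coordinates is already onto the full product (this is the content of \cite[Lemma 8.2.4]{RZ} — a subgroup of a product of nonabelian finite simple groups that projects onto each factor, with the factors pairwise "independent", is the whole product, after grouping conjugate factors). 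Concretely, I would take a minimal set of nonabelian-simple quotients $M \twoheadrightarrow S_i$ realizing $M/A$, observe the $S_i$ are pairwise non-conjugate as normal subgroups of $M/A$, and conclude $M/A \cong \prod_i S_i$ with each $S_i$ normal in $M$. Then $P \supseteq \overline{[M,M]}$ surjects onto $[M/A, M/A] = M/A$ (since each $S_i$ is perfect, the closure of the commutator of $\prod_i S_i$ is all of $\prod_i S_i$), which gives $PA = M$.

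So the key steps, in order, are: (1) express $M$ as a subdirect product of its abelian and nonabelian-simple quotients using the definition of $D(G)$; (2) define $A$ and $P$ intrinsically and note they are characteristic with $A \cap P = \{1\}$ and $[A,P] = \{1\}$; (3) identify $M/A$ with an unrestricted direct product $\prod_i S_i$ of nonabelian finite simple groups, each normal in $M$, via \cite[Lemma 8.2.4]{RZ} (the same grouping-into-orbits phenomenon as in \lemref{CompactLem}); (4) since each $S_i$ is perfect, deduce $\overline{[M,M]}$ maps onto $M/A$, hence $PA = M$; (5) conclude $M = A \times P$, and set the characteristic subgroup "$P$" of the statement to be this $P$ (note $P$ is itself a product of nonabelian finite simple groups, being isomorphic to $M/A$). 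I would also record that $A$ is precisely the abelian part: $A \cong M/P = M^{\mathrm{ab}}$. The one technical care needed is to make sure the indexing set of simple quotients can be taken so that distinct indices give non-conjugate normal subgroups — this is where one passes to $G$-orbits exactly as in \lemref{CompactLem}, and it is the step I'd expect to require the most attention to get right.
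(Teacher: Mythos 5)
Your proposal is correct and takes essentially the same route as the paper: the same choices of $A$ (intersection of the kernels of the nonabelian finite simple quotients) and $P$ (the closed commutator subgroup), the same observation that $A\cap P=\{1\}$ by the definition of $D(G)$, and the same key input that $M/A$ is a genuine product of nonabelian finite simple groups (the paper cites \cite[Lemma 18.3.11]{FJ} for what you sketch via \cite[Lemma 8.2.4]{RZ}). The only cosmetic difference is in establishing $AP=M$: you use that a product of nonabelian finite simple groups is (topologically) perfect, while the paper derives the equivalent contradiction from a maximal open normal subgroup containing $AP$.
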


\begin{proof}

Let $A$ be the intersection of all $N \lhd_o M$ with $M/N$ a nonabelian finite simple group, and let $P$ be the commutator of $M$. Clearly, $A$ and $P$ are characteristic in $M$, and $A \cap P = \{1\}$ (by definition of $D(G)$). Towards a contradiction, suppose that $AP \lneq M$, and pick some maximal proper $AP \leq L \lhd_o M$. By \cite[Lemma 18.3.11]{FJ}, $M/A$ is isomorphic to a product of nonabelian finite simple groups, and $M/L$ is a nonabelian finite simple group (since $A \leq L$ is maximal). On the other hand, $P \leq_c L$ so $M/L$ is abelian, leading to a contradiction. Therefore, $AP = M$ so $M = A \times P$, and $P \cong M/A$ so $P$ is isomorphic to a product of nonabelian finite simple groups. Finally, $A \cong M/P$ which is an abelian group. $\blacksquare$

\end{proof}

We shortly recall the terminology of twisted wreath products. Let $A$ and $G_0 \leq G$ be finite groups with a right action by group automorphisms of $G_0$ on $A$. Define: $$\text{Ind}_{G_0}^{G}(A) = \{f \colon G \rightarrow A \hspace{1mm} | \hspace{1mm} f(xy) = f(x)^y \hspace{1mm} \forall x \in G, y \in G_0 \}.$$ This is a group under pointwise multiplication, on which $G$ acts by group automorphisms from the right by $f^{\sigma}(\tau) = f(\sigma\tau)$,  for $\sigma, \tau \in G$. The \textbf{\textit{twisted wreath product}} is defined
to be the semidirect product $$A \hspace{1mm} \text{wr}_{G_0} \hspace{1mm} G = \text{Ind}_{G_0}^{G}(A) \rtimes G.$$

In the following theorem we utilize the twisted wreath product approach to prove the generalization of \cite[Theorem 5.7]{BFW} to arbitrary infinite rank, thus treating the case of infinite rank in \thmref{FSLThm}. Our proof is just a translation to group theory (using the Galois correspondence) of the field theoretic argument given in the proof of \cite[Theorem 3.2]{BFW}.

\begin{theorem} \label{FIThm}

Let $F$ be a free profinite group of infinite rank $r$, and let $N \lhd_c F$ be such that $F/N$ is of finite abelian-simple length. Then any $N \leq R \leq_c F$ is a free profinite group of rank $r$.

\end{theorem}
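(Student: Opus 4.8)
The plan is to prove that every finite embedding problem for $R$ is properly solvable and then invoke the criterion that a profinite group of rank $r$ all of whose finite embedding problems are properly solvable is free of rank $r$ (the infinite-rank analogue of \cite[Corollary 24.8.3]{FJ}); rank $r$ itself is immediate since $N \leq R \leq_c F$ forces $d(R)$ between $d(F/N)$-ish bounds and $R$ surjects onto $F/R'$ for cofinally many open $R' \supseteq N$, but in the infinite case the cleanest route is to note that $R$ contains $N$ and maps onto $F/N$, and a free profinite group of infinite rank $r$ has a normal subgroup $N$ with $R/N$... — more simply, $R$ is a closed subgroup of $F$ containing a normal subgroup of infinite index so $d(R) = r$ by the standard rank estimates. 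So the real content is solving embedding problems.

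The argument I would run is the twisted wreath product argument of Haran, transcribed from \cite[Theorem 3.2]{BFW} via the Galois correspondence. Induct on $l = l(F/N)$. If $l = 0$ then $N = F$ and $R = F$, done. For the inductive step, let $\ell = l(F/N)$ and consider the penultimate term: set $N_1 = $ the preimage in $F$ of $D_{\ell-1}(F/N)$, so $N \leq N_1 \lhd_c F$, the quotient $N_1/N$ is central-ish — actually by \propref{abnonabProp} applied to $F/N$ modulo $D(F/N)$... no: $N_1/N = D_{\ell-1}(F/N)$ satisfies $D(N_1/N) = 1$, hence $N_1/N$ is a direct product of an abelian group and a product of nonabelian finite simple groups, and $F/N_1$ has abelian-simple length $\ell - 1$. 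Given a finite embedding problem $(\varphi\colon R \to B,\ \alpha\colon A \to B)$, I would first lift it over $F/N_1$: by induction $R N_1/N_1 \leq F/N_1$ is free, so after enlarging $A$ appropriately (pulling the embedding problem back along $R \to R/(R\cap N_1)$) we may solve the part of the problem living above $N_1$. This reduces us to the case where the kernel $\Ker(\alpha)$ is "concentrated in the last layer", i.e. we may assume $N_1 \leq R$ and the embedding problem has kernel a quotient of $N_1/N$. Split $N_1/N = A_0 \times P_0$ with $A_0$ abelian, $P_0$ a product of finite simple groups (characteristic, \propref{abnonabProp}-style): the abelian part is handled by the classical abelian-layer solvability for free groups (\cite[Corollary 25.4.8]{FJ}-type input — since $F$ is free, $F \to F/N_1$ together with the abelian module structure makes $\Hom$ surjective), and the nonabelian-simple part $P_0$ is exactly where the twisted wreath product $A_0'\,\text{wr}_{G_0}\,G$ gets built: realize the simple-group layer as an induced module, use that $F$ being free of rank $r = \rank F$ surjects onto the twisted wreath product (this is where infinite rank is used — one needs $r \geq$ the relevant rank bound, automatic since $r$ is infinite), and pull the resulting homomorphism back through the Galois/subgroup correspondence to get a proper solution over $R$.

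The main obstacle — and the step deserving the most care — is the twisted wreath product construction and verifying that $F$ surjects onto it: one must check that $G_0 = $ the appropriate decomposition subgroup, the action of $G_0$ on the simple-group factor, and the induced module $\text{Ind}_{G_0}^G(A_0)$ are set up so that the composite map $F \to F/N_1 = G$ lifts to $\text{Ind}_{G_0}^G(A_0) \rtimes G$, and that the lift restricts surjectively on $R$. This is precisely the content of Haran's lemma on twisted wreath products and is the heart of \cite[Theorem 3.2]{BFW}; in the group-theoretic formulation one also has to be slightly careful that all the normal subgroups produced are genuinely closed and normal in $F$ (not merely in $F/N_1$), which follows from $L/N$ being $(F/N)$-invariant and \lemref{CompactLem} in the background but here only finitely/simply-generated pieces are needed. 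Once the embedding problem is solved over $F$ mapping onto the wreath product, restricting to $R$ and using $N \leq R$ gives the proper solution for $R$, completing the induction.

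$\blacksquare$
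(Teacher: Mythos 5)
Your plan correctly identifies the induction on $l(F/N)$, and the base case and the "finite index in $D_m(F)R$" case match the paper's proof, which also reduces via [RZ, Theorem~3.6.2]. But the engine you propose for the infinite-index case is not the one the paper uses, and as stated it has a genuine gap.

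You propose to build a surjection from $F$ onto a twisted wreath product $A\,\mathrm{wr}_{G_0}\,G$ and then "pull the resulting homomorphism back through the Galois/subgroup correspondence to get a proper solution over $R$." The problem is that restricting a proper solution $F \to A\,\mathrm{wr}_{G_0}\,G$ to the closed subgroup $R$ need not be surjective, and you give no mechanism to force surjectivity on $R$. This is precisely the obstruction that Haran's criterion [Ha, Theorem~2.2] is engineered to sidestep, and the paper uses it in the opposite logical direction from what you describe: Haran's theorem asserts that $R$ is free of rank $r$ provided that, for all relevant choices of $R \leq K_\alpha \leq_o F$, $K_\beta \leq_o F$, and an action of $G_0 = RL/L$ on a finite group $A$, the twisted wreath embedding problem over $F/N_0$ (with $N_0 = N \cap L$) does \emph{not} have a proper solution. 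The bulk of the paper's proof is a proof by contradiction of this non-solvability: one chooses $L$ so that $[D_m(G)G_0 : G_0] > 2^m$ in $G = F/L$, applies [BFW, Proposition~2.11] to get a nontrivial $t \in D_{m+1}(A\,\mathrm{wr}_{G_0}\,G) \cap \mathrm{Ind}_{G_0}^G(A)$, lifts $t$ along a hypothetical proper solution $\lambda$ to some $T \in D_{m+1}(F/N_0) \leq N/N_0$, and observes $T$ also lies in $\mathrm{Ker}(\theta) = L/N_0$, forcing $T = 1$ and hence $t = 1$, a contradiction. Nothing in this argument produces an explicit surjection from $F$ or from $R$ onto a wreath product, and the abelian/simple splitting of $N_1/N$ via \propref{abnonabProp} that you propose plays no role here (that decomposition is used only in \thmref{FFSLThm}, the finitely generated case). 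Likewise \lemref{CompactLem} is not used in the proof of \thmref{FIThm}; it enters only through \propref{PushingUpProp} in the finitely generated case. So the proposal is not just vague at the "heart" step — it replaces Haran's non-solvability criterion with a direct-solvability strategy whose key restriction step is unjustified, and it imports tools (\propref{abnonabProp}, \lemref{CompactLem}) from the other half of the paper that do not appear in this proof.
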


\begin{proof}

By the finite length assumption, there exists a minimal integer $m \geq -1$ for which $D_{m+1}(F/N) = \{1\}$. By the definition of $m$ and by \cite[Lemma 2.7 (1)]{BFW}, $l(F/D_m(F)) < l(F/N)$ so $D_m(F)R$ is a free profinite group of rank $r$ (by induction on $m$, with the base case $m = -1$, i.e $F = N$ being trivial). Hence, if $[D_m(F)R : R] < \infty$, then $R$ is a free profinite group of rank $r$ by \cite[Theorem 3.6.2]{RZ}. 

We may thus assume that $[D_m(F)R :R] = \infty$. In order to show that $R$ is a free profinite group, we apply \cite[Theorem 2.2]{Ha}. Let $R \leq K_\alpha \leq_o F$, and $K_\beta \leq_o F$. By the assumption on the index, we can choose some $L \lhd_o F$ contained in $ K_\alpha \cap K_\beta$, such that $$[D_m(F)RL : RL] > 2^m.$$ Set $G = F/L$, $K = RL$, $N_0 = N \cap L$, and $G_0 = K/L$. Let $A$ be a nontrivial finite group equipped with an action of $G_0$ by group automorphisms. By \cite[Theorem 2.2]{Ha}, we need to show that the embedding problem $$\begin{tikzpicture}[scale=1.5]
\node (B) at (1,1) {$F/N_0$};
\node (C) at (-0.5,0) {$A \hspace{1mm} \text{wr}_{G_0} \hspace{1mm} G$};
\node (D) at (1,0) {$G$};
\path[->,font=\scriptsize,>=angle 90]
(B) edge node[right]{$\theta$} (D)
(C) edge node[above]{$\alpha$} (D);
\end{tikzpicture}$$ does not have a proper solution.

Towards a contradiction, suppose that there is a proper solution $\lambda \colon F/N_0 \rightarrow A \hspace{1mm} \text{wr}_{G_0} \hspace{1mm} G$. Applying \cite[Lemma 2.7 (1)]{BFW} to $F \to F/L$ we see that $$[D_m(G)G_0 : G_0] = [D_m(G)RL/L : RL/L] = [D_m(F)RL : RL] > 2^m.$$ In light of that, \cite[Proposition 2.11]{BFW} tells us that $$I = D_{m+1}(A \hspace{1mm} \text{wr}_{G_0} \hspace{1mm} G) \cap \text{Ind}_{G_0}^{G}(A) \neq \{1\}.$$ Take a nontrivial $t \in I$. By \cite[Lemma 2.7 (1)]{BFW}, there is some $T \in D_{m+1}(F/N_0)$ such that $\lambda(T) = t$. From our definition of $m$ and \cite[Lemma 2.7 (1)]{BFW}, we infer that $D_{m+1}(F) \leq_c N$, so: $$ D_{m+1}(F/N_0) = D_{m+1}(F)N_0/N_0 \leq N/N_0$$ and thus $T \in N/N_0$. Since $t \in \text{Ind}_{G_0}^G(A)$, so $$\theta(T) = \alpha(\lambda(T)) = \alpha(t) = 1$$ which means that $T \in \text{Ker}(\theta) = L/N_0$. It follows that $$T \in N/N_0 \cap L/N_0 = \{1\}.$$ This is a contradiction to the choice of $t$ since $t = \lambda(T) = \lambda(1) = 1$. $\blacksquare$

\end{proof}

Now, we combine all of the tools developed in this work to establish \thmref{FSLThm} (in view of \thmref{FIThm}, we should only treat the finitely generated case).

\begin{theorem} \label{FFSLThm}

Let $F$ be a free profinite group of finite rank $m\geq2$. Let $N \lhd_c F$, set $R = F/N$, and suppose that $l(R) < \infty$. Then any $N \leq H \leq_c F$ of infinite index is a free profinite group of rank $\aleph_0$.

\end{theorem}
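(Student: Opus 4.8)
The plan is to run an induction on $l(R)$, mirroring the structure of the proof of \thmref{FIThm} but replacing the infinite-rank input $[D_m(F)R:R]<\infty \Rightarrow$ freeness (which came from \cite[Theorem 3.6.2]{RZ}) by the finite-rank tools built in Sections 3 and 4. So let $m\geq -1$ be minimal with $D_{m+1}(R)=\{1\}$; writing $D_m(F)$ for the preimage in $F$ of $D_m(R)$, the group $D_m(F)$ is normal in $F$, contains $N$, and $l(F/D_m(F)) < l(R)$. By \propref{abnonabProp} applied to $D_m(F)/D_{m+1}(F)$ (note $D_{m+1}(F)\leq N$ by minimality of $m$, exactly as in \thmref{FIThm}), we get $D_m(F)/D_{m+1}(F) = A\times P$ with $A$ abelian and $P$ a product of nonabelian finite simple groups, and both characteristic, hence normal in $F/D_{m+1}(F)$; pulling back, we obtain closed normal subgroups $D_{m+1}(F)\leq P^\sharp, A^\sharp \lhd_c F$ with $P^\sharp A^\sharp = D_m(F)$.

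The argument then splits according to how $H$ meets these. First, by the inductive hypothesis (base case $m=-1$, i.e. $F=N$, is trivial since then $H=F$ has rank $m\geq 2$ — actually one uses $l(F/D_m(F))<l(R)$ and induction to conclude $D_m(F)H$ is a suitable group; more precisely, the induction is run so that any closed subgroup of $F$ containing $D_m(F)$ and of infinite index is free of rank $\aleph_0$, while if it has finite index it is free of finite rank by \propref{AbstProfBasisProp}/standard facts). If $[D_m(F)H : H] < \infty$ then $H$ is open in the (free, by induction) group $D_m(F)H$ and we are done by \propref{AbstProfBasisProp}. So assume $[D_m(F)H:H]=\infty$. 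If $P^\sharp \not\leq H$, then since $P^\sharp/D_{m+1}(F)\cong P$ and $D_{m+1}(F)\leq N\leq H$, the quotient $P^\sharp/(P^\sharp\cap H)$ — viewed inside $F/D_{m+1}(F)$ — is a nontrivial $R$-invariant quotient of a product of nonabelian finite simple groups, so \lemref{CompactLem} (exactly as packaged in \propref{PushingUpProp}) yields a finite $N'\lhd_c F$ with $N\leq N' $, $N'\not\leq H$, $[N':N']$ finite and nontrivial — wait, we need $N'\not\le H$ and $[N':(N'\cap H)]$ finite; then $H$ is a proper open subgroup of $N'H$ not containing the finite normal subgroup $N'$, so $H$ is free of rank $\aleph_0$ by the result cited in \propref{PushingUpProp} (\cite[Proposition 1.3]{Ja06}), since $[F:H]=\infty$ forces the rank to be $\aleph_0$. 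Concretely, this whole sub-case is just \propref{PushingUpProp} applied with $L = P^\sharp$, $K=H$.

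It remains to handle the case $[D_m(F)H:H]=\infty$ and $P^\sharp \leq H$, so that only the abelian layer is not yet absorbed: $D_m(F) = P^\sharp A^\sharp$ with $P^\sharp\leq H$, so $D_m(F)H = A^\sharp H$, and modulo $H$ the obstruction is abelian. Here I would bring in the combinatorial machinery of Section 3. The idea: since $l(F/D_m(F))<l(R)$ and (by a further induction step, or by \corref{SolveCor}-type reasoning for the solvable/abelian layers) one can reduce to showing every finite embedding problem for $H$ is properly solvable, then invoke Iwasawa's criterion \cite[Corollary 24.8.3]{FJ}. To solve a finite embedding problem of a given size $k$ for $H$: either $\mathrm{rank}(H\cap \Phi)=\infty$ (where $\Phi$ is the abstract free group on a basis of $F$), in which case \corref{InfCor} already gives freeness of rank $\aleph_0$ and we are done outright; or $\mathrm{rank}(H\cap\Phi)<\infty$, and then one must use that $N\leq H$ with $N\cap\Phi\neq\{1\}$ — but if $N\cap\Phi=\{1\}$ a separate argument is needed. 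This is where I expect the main obstacle: controlling the finitely-generated abelian residue. The rank-counting argument advertised in the introduction should enter precisely here — comparing $d(H/\,\cdot\,)$ against the size of the embedding problem using \propref{GaschProp} and \corref{RankCor}, to ensure $\mathrm{rank}(H\cap\Phi)$ is large enough to solve any prescribed finite embedding problem, hence (letting the size grow) infinite, hence $H$ free of rank $\aleph_0$ by \corref{InfCor}. The delicate point is that $H$ has infinite index but the abelian quotient $D_m(F)H/H$ could a priori be a procyclic obstruction that does not immediately force large abstract rank; reconciling "infinite index in $F$" with "infinite abstract rank of $H\cap\Phi$" via the interplay of \corref{ContWordCor} and \propref{GaschProp} is the crux, and I would expect to split once more according to whether $N$ contains a nontrivial element of $\Phi$ (use \thmref{FreeVerbalSubgroupThm}/\corref{ContWordCor} directly) or not (use that $F/N$ then has infinite rank-type behaviour making $[D_m(F)H:H]$ large in a way \corref{RankCor} can exploit).
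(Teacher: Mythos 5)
Your handling of the nonabelian--simple layer matches the paper: decompose the last nontrivial term of the generalized derived series as $A\times P$ via \propref{abnonabProp} and, when the simple part is not contained in $H$, invoke \propref{PushingUpProp} (a small repair: that proposition needs $N\leq L$, so take $L=P^\sharp N$; then $L/N$ is a quotient of $P$ and hence still a product of nonabelian finite simple groups by \cite[Lemma 8.2.4]{RZ}). Your sub-case $[D_m(F)H:H]<\infty$ also goes through by induction on the length, since then $[F:D_m(F)H]=\infty$ is forced. The genuine gap is the abelian case, which you yourself flag as ``the crux'' without resolving, and it is exactly where the paper's two new tools enter. Writing $\delta\colon F\to R$ for the quotient map, $H_0=\delta(H)$ and $B=\delta^{-1}(A)$, the paper splits on whether $[R:H_0A]$ is finite or infinite, not on $[D_m(F)H:H]$. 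When $[R:H_0A]<\infty$ no word or rank argument is available (every open subgroup containing $HB$ has bounded index), and the paper instead applies the intermediate subgroup theorem \thmref{RouteThm} to embed $H$ in a free subgroup $M\leq_c F$ of rank $\aleph_0$, checks $l(\delta(M))<\infty$ using \propref{FinLindexProp} and the fact that $\delta(M)\cap A$ is abelian, and finishes with the infinite-rank case \thmref{FIThm}. Your proposal never uses \thmref{RouteThm}, and the only fallback it suggests --- passing to the open subgroup $D_m(F)H$ --- is circular, since that is again a finitely generated free group with a finite-length quotient and an infinite-index subgroup $H$.

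When $[R:H_0A]=\infty$, the missing idea is not ``compare $\rank(H\cap\Phi)$ with the size of the embedding problem'', nor a split on whether $N\cap\Phi$ is trivial (for the original basis it may well be, and no separate argument rescues that branch as you set it up). It is a concrete commutator trick that \emph{manufactures} a word: choose an $R$-normal open $U_j\leq A$ with $A\not\leq H_0U_j$, set $U=\delta^{-1}(U_j)$, and pick $HU\leq V\leq_o F$ not containing $B$ of index so large that $d(V)-d(V/U)\geq 2$; this is the advertised rank count, using $d(V/U)\leq d(VB/B)+|(V\cap B)/U|$ together with $[F:VB]/[F:V]\leq 1/2$. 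Then \propref{GaschProp} produces a basis of $V$ with two elements $v_1,v_2\in U$, and since $U/N$ is abelian the commutator $[v_1,v_2]$ is a nontrivial word in that basis lying in $N$; now \corref{ContWordCor}, applied to $V$ and this new basis, gives that $H$ is free of rank $\aleph_0$. Without producing such a word, your appeal to the verbal subgroup theorem has nothing to act on.
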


\begin{proof}

Let $\delta : F \rightarrow R$ be the quotient map, and let $H_0 = \delta(H)$. We may assume, without loss of generality, that $N = H_F$ (that is, there are no proper closed $F$-normal subgroups between $N$ and $H$) for otherwise we change $N$ accordingly (taking it to be the intersection of all the $F$-conjugates of $H$), and the length does not grow in view of \cite[Lemma 2.7 (1)]{BFW}. Since $l(R) < \infty$, there is a maximal $k \in \mathbb{N}$ for which $D_k(R)$ is nontrivial. By \propref{abnonabProp}, $D_k(R) = D_k(R)/D(D_k(R)) = A \times P$ where $P$ is a direct product of nonabelian finite simple groups, $A$ is an abelian group, and $A,P$ are characteristic in $D_k(R)$, and thus $A,P \lhd_c R$. If $P \neq \{1\}$, then $P \nleq H_0$ and we arrive at the desired conclusion by applying \propref{PushingUpProp}.

Therefore, we may assume that $P = \{1\}$ so $A = D_k(R) \neq \{1\}$ by definition of $k$. Since $A \lhd_c R$, we can intersect $A$ with a system coming from \cite[Theorem 2.1.3 (c)]{RZ}, to obtain a descending chain of open subgroups of $A$, $\{U_i\}_{i \in I}$ such that $U_i \lhd_c R$ for each $i \in I$ and $$\bigcap_{i \in I}U_i = \{1\}.$$ Since $A \nleq H_0$, we infer from \cite[Proposition 2.1.4 (a)]{RZ} that there is some $j \in I$ such that $A \nleq H_0U_j$. We distinguish between two cases:

\underline{\textbf{Case I :}} $[R :  H_0A] < \infty.$

By \thmref{RouteThm}, we can pick a free profinite subgroup \mbox{$H \leq M \leq_c F$} of rank $\aleph_0$, and set $M_0 = \delta(M)$. By \cite[Proposition 2.9]{BFW}, we have $$l(M_0) \leq l(M_0/M_0 \cap A) + l(M_0 \cap A) \leq l(M_0A/A) + 1$$ since $M_0 \cap A$ is abelian. By \cite[Lemma 2.7 (1)]{RZ}, $l(R/A) \leq l(R) < \infty$ and $[R/A : M_0A/A] \leq [R : H_0A] < \infty$ by our assumption, so we use \propref{FinLindexProp} to conclude that $l(M_0) \leq l(M_0A/A) + 1 < \infty$. The profinite freeness of $N \leq_c H \leq_c M$ is a consequence of \thmref{FIThm}.

\underline{\textbf{Case II :}} $[R :  H_0A] = \infty.$

Set $U = \delta^{-1}(U_j)$, $B = \delta^{-1}(A)$ and note that $B \nleq HU$. By \mbox{\cite[Proposition 2.1.4 (d)]{RZ}}, there is some $HU \leq V \leq_o F$ not containing $B$, with $$[F : V] \geq (2|B/U| + 4)/(d(F) - 1)$$ so that $d(V) \geq 2|B/U| + 5$ (see \cite[Theorem 3.6.2 (b)]{RZ}). \newline Using \cite[Theorem 3.6.2 (b), Corollary 3.6.3]{RZ} and the fact that \mbox{$V \lneq VB$}, we see that $$d(V/U) \leq d(V/(V \cap B)) + d((V \cap B)/U) \leq d(VB/B) + |(V \cap B)/U| \leq $$ $$\leq (d(F/B) - 1)[F/B : VB/B] + 1 + |B/U| \leq (d(F) - 1)[F : VB] + |B/U| + 1 = $$ $$ = (d(V) - 1)\frac{[F:VB]}{[F:V]} + |B/U| + 1 \leq (d(V) - 1)/2 + |B/U| + 1 \leq d(V) - 2$$ where the last inequality follows at once from our bound on $d(V)$. That is, we have shown that $d(V) - d(V/U) \geq 2$. From \mbox{\propref{GaschProp}}, we conclude that there is a basis $\{ v_1, \dots, v_n \}$ of $V$ with $v_1, v_2 \in U$. $U/N = U_j$ is abelian, so the commutator $[v_1,v_2] \in  N$, and this allows us to use \corref{ContWordCor} to conclude that $N \leq_c H \leq_c V$ is a free profinite group of rank $\aleph_0$. $\blacksquare$

\end{proof}

\section{Applications}

Here, several applications of our main result are given. In what follows, we formulate and prove the group theoretic analogue of \mbox{\cite[Theorem 5.4]{BFW}}.

\begin{corollary} \label{IntersectionBoundedIndexCor}

Let $F$ be a free profinite group. Let $d \in \mathbb{N}$, and  let $\mathcal{D}$ be the family of all open subgroups of $F$ of index not exceeding $d$. Set $K = \bigcap \mathcal{D}$, and let $K \leq M \leq_c F$. It follows that $M$ is a free profinite group.

\end{corollary}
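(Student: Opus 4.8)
The plan is to reduce the statement to \thmref{FSLThm} by showing that $F/K$ has finite abelian-simple length, and then to supply the cosmetic argument needed to pass from a finite-rank $F$ to the general case. First I would observe that $K = \bigcap \mathcal{D}$ is a closed (indeed characteristic) normal subgroup of $F$: it is an intersection of a family of open subgroups, and it is normal because the family $\mathcal{D}$ is stable under conjugation (conjugation preserves index). The crucial point is that $\mathcal{D}$ is \emph{finite}: a finitely generated profinite group has only finitely many open subgroups of index at most $d$ (this is standard, e.g.\ \cite[Proposition 2.5.1]{RZ}), so when $F$ has finite rank, $K$ is in fact open in $F$ and the quotient $F/K$ is finite, hence certainly of finite abelian-simple length. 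For $F$ of infinite rank this finiteness fails, so $K$ need not be open; but $F/K$ embeds into the (finite!) product $\prod_{D \in \mathcal{D}} F/\mathrm{core}(D)$ only after passing to cores, and more to the point, $F/K$ has exponent-type constraints: every open subgroup of $F/K$ of index $\le d$ is trivial, which forces $d(F/K) $ to be controlled and, more importantly, forces $l(F/K) < \infty$. Concretely I would argue that $F/K$ is generated by its elements of order dividing $d!$ in a suitable sense, or simply cite that a profinite group in which the intersection of all index-$\le d$ subgroups is trivial has finite abelian-simple length bounded in terms of $d$ — this is exactly the content underlying \cite[Theorem 5.4]{BFW}, whose group-theoretic analogue the corollary asserts.

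The cleanest route avoids delicate length estimates: I would instead proceed directly. Since $K \lhd_c F$ and $K \le M \le_c F$, and since $F/K$ has finite abelian-simple length (to be justified), \thmref{FSLThm} applies \emph{verbatim} when $F$ is nonabelian free — both in the finitely generated case (\thmref{FFSLThm}) and the infinitely generated case (\thmref{FIThm}) — and yields that $M$ is free profinite. The only genuine gaps are therefore: (i) the finite-length claim for $F/K$, and (ii) the degenerate cases where $F$ is trivial, procyclic, or $M = F$, in which freeness of $M$ is immediate or vacuous.

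For (i), here is the argument I would write out. Let $U \leq_o F/K$ have index $\le d$; its preimage in $F$ is an open subgroup of index $\le d$, hence lies in $\mathcal{D}$, hence contains $K$, so $U = F/K$ — thus $F/K$ has \emph{no} proper open subgroup of index $\le d$. I then claim any profinite group $R$ with this property satisfies $l(R) \le \log_2 d$ or so: if $D(R) \ne R$ then $R/D(R) = A \times P$ by \propref{abnonabProp} with $A$ abelian and $P$ a product of nonabelian finite simple groups; a nontrivial such $P$ would have a quotient which is a nonabelian finite simple group of order $> d$ admitting a subgroup of index $\le d$... here I need the finite-simple-group fact that the smallest index of a proper subgroup is at least, say, $5$, together with iterating — so rather than a one-line bound I would prove by induction on the structure of the generalized derived series that each step drops the "index spectrum" and terminates in at most $O(\log d)$ steps. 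I expect this inductive bound on $l(F/K)$ to be the main obstacle: it is the one place where a real (if short) argument about finite simple groups and about abelian quotients of bounded exponent enters, as opposed to mere bookkeeping. Granting it, the corollary follows: apply \thmref{FSLThm} to the free profinite group $F$ (nonabelian case; the abelian/procyclic cases are trivial since then $M$ is procyclic, hence free), the closed normal subgroup $N := K$ with $F/N$ of finite abelian-simple length, and the intermediate subgroup $N \le M \le_c F$, to conclude $M$ is free profinite. $\blacksquare$
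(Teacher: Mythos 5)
Your overall strategy --- show $l(F/K)<\infty$ and quote \thmref{FSLThm} --- is the paper's, and your treatment of the finite-rank case (finitely many open subgroups of index $\le d$, so $K$ is open and $F/K$ finite) is fine. But the key step (i), which you yourself flag as the main obstacle, is broken in two ways. First, the derivation is a non sequitur: for $U\le_o F/K$ of index $\le d$, its preimage lies in $\mathcal{D}$ and hence contains $K$ --- but \emph{every} subgroup of $F/K$ has preimage containing $K$, so this gives nothing, and certainly not $U=F/K$. The property you actually obtain is the dual one: the open subgroups of $F/K$ of index $\le d$ are precisely the images of the members of $\mathcal{D}$, and they intersect \emph{trivially}. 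Second, the lemma you propose to prove ("a profinite group with no proper open subgroup of index $\le d$ has abelian-simple length $O(\log d)$") is false: an infinite iterated wreath product $\varprojlim\,(A_n\wr\cdots\wr A_n)$ of alternating groups with $n$ large has no nontrivial finite quotient of order $\le d!$, hence no proper open subgroup of index $\le d$, yet its generalized derived series never terminates. So no argument about finite simple groups will close this route.

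The correct finiteness argument uses the residual property rather than the absence of small subgroups. For each $L\in\mathcal{D}$ the normal core $L_F$ has index at most $d!$, so $l(F/L_F)\le\log_2(d!)$ by \cite[Lemma 2.2]{BFW}; since $\mathcal{D}$ is conjugation-stable, $\bigcap_{L\in\mathcal{D}}L_F=K$, and \cite[Proposition 2.8]{BFW} (finite length is inherited from a family of normal subgroups with trivial intersection and uniformly bounded quotient lengths) yields $l(F/K)\le\log_2(d!)<\infty$. With that in hand, your concluding application of \thmref{FSLThm} to $K\le M\le_c F$ is exactly the paper's.
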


\begin{proof}

Denote by $N \lhd_c F$ the intersection of all conjugates of the subgroups in $\mathcal{D}$, and pick some $L \in D$. Since $$[F/N : L/N] = [F : L] \leq d$$ we conclude that for the intersection of the conjugates of $L$ we have: $$[F/N : L_F/N] = [F : L_F] \leq d!$$ so by \cite[Lemma 2.2]{BFW}, $l((F/N)/(L_F/N)) \leq \log_2(d!)$. By our definition of $N$, $$ \bigcap_{R \in D} R_F/N = \{1\}$$ in $F/N$, so by \cite[Proposition 2.8]{BFW}, $F/N$ is of finite abelian-simple length, and our assertion is a consequence of \thmref{FSLThm} in view of the fact that $N \leq K \leq M \leq F$. $\blacksquare$

\end{proof}

Now, we prove the analogue of \cite[Theorem 5.3]{BFW}, thus generalizing \cite[Corollary 8.9.3]{RZ} for the formation of all finite groups.

\begin{corollary} \label{SolveCor}

Let $F$ be a nonabelian free profinite group, and let $N \lhd_c F$ be a closed normal subgroup of $F$ such that $F/N$ is solvable. Then any $N \leq K \leq_c F$ is a free profinite group. 

\end{corollary}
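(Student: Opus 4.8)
The plan is to deduce Corollary~\ref{SolveCor} from Theorem~\ref{FSLThm} by showing that a solvable quotient $F/N$ automatically has finite abelian-simple length. First I would recall that if $G$ is a solvable profinite group, then $G$ has a finite normal series $\{1\} = G_0 \lhd G_1 \lhd \dots \lhd G_k = G$ with each $G_{i+1}/G_i$ abelian (this is just the profinite version of solvability: the derived series terminates after finitely many steps, $G^{(k)} = \{1\}$ for some $k$). The key observation is that for such a $G$, the generalized derived series $D_i(G)$ descends at least as fast as the ordinary derived series. Indeed, by Proposition~\ref{abnonabProp} applied to $D_i(G)$, the quotient $D_i(G)/D_{i+1}(G) = A \times P$ with $A$ abelian and $P$ a product of nonabelian finite simple groups; but a quotient of a solvable group is solvable, and a solvable product of nonabelian finite simple groups must be trivial, so $P = \{1\}$ and $D_i(G)/D_{i+1}(G)$ is abelian. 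Hence $D(H)$ contains the commutator subgroup $H'$ of any solvable $H$ (in fact $D(H) = H'$ when $H$ is solvable, since then the only relevant quotients are the abelian ones), which gives $D_i(G) \subseteq G^{(i)}$ by induction on $i$. Therefore $D_k(G) \subseteq G^{(k)} = \{1\}$, so $l(F/N) = l(G) \leq k < \infty$.

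With this in hand, the corollary follows immediately: $F/N$ has finite abelian-simple length, $F$ is nonabelian free profinite, and $N \leq K \leq_c F$, so Theorem~\ref{FSLThm} applies verbatim and $K$ is a free profinite group. I would also note, as the comparison to \cite[Corollary 8.9.3]{RZ} in the remark before the statement suggests, that this recovers the classical statement that an intermediate subgroup above the intersection of a solvable-by-finite chain is free — but here packaged through the single clean hypothesis of finite length.

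The only step requiring a little care is the claim that a solvable product of nonabelian finite simple groups is trivial: if $P = \prod_{i} S_i$ with each $S_i$ nonabelian finite simple and $P$ solvable, then each $S_i$, being a quotient of $P$, is solvable, contradicting nonabelian simplicity unless the index set is empty. This is the main (and essentially the only) obstacle, and it is not much of one. An alternative, perhaps even shorter, route would be to invoke \cite[Proposition 2.8]{BFW} directly — since $F/N$ solvable means the ordinary derived series of $F/N$ reaches $\{1\}$ and each successive quotient is abelian, one can intersect open normal subgroups witnessing abelianness to see that the generalized derived series terminates — but the derived-series comparison above is the most transparent way to see it.
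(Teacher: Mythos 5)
Your argument is essentially identical to the paper's: the paper's proof simply states that the generalized derived series of the solvable group $F/N$ coincides with its ordinary derived series, hence terminates, so $l(F/N) < \infty$ and Theorem~\ref{FSLThm} applies. You supply the (correct) justification for that coincidence — a solvable group has no nonabelian finite simple quotients, so $D(G) = \overline{[G,G]}$ — which the paper leaves implicit, but the route is the same.
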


\begin{proof}

The generalized derived series of $F/N$ coincides with its derived series (formed by taking successive commutators) which terminates (at $\{1\}$) after a finite number of steps. Hence, $l(F/N) < \infty$ and we arrive at our result by applying \thmref{FSLThm}. $\blacksquare$

\end{proof}

Fix some $n \in \mathbb{N}$. By a finite $n$-dimensional representation of a profinite group we mean a continuous homomorphism to $\mathrm{GL}_n(K)$, for some field $K$ (equipped with the discrete topology). We give the group theoretic analogue of \cite[Theorem 5.1]{BFW}.

\begin{corollary} \label{FiniteRepsCor}

Let $F$ be a nonabelian free profinite group, and $n \in \mathbb{N}$. Denote
by $\Omega$ the family of all finite $n$-dimensional representations of $F$, and set $$N = \bigcap_{\rho \in \Omega} \mathrm{Ker}(\rho).$$ Then any $N \leq H \leq_c F$ is a free profinite group.

\end{corollary}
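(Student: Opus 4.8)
The plan is to deduce the corollary from \thmref{FSLThm}. Since $F$ is nonabelian and $N \le H \le_c F$, it is enough to prove that $F/N$ has finite abelian-simple length, and I would follow the pattern of \corref{IntersectionBoundedIndexCor}. First, for each $\rho \in \Omega$ the image $\rho(F)$ is the continuous image of the compact group $F$ in the discrete group $\mathrm{GL}_n(K)$, hence finite; so $\mathrm{Ker}(\rho) \lhd_o F$ and $F/\mathrm{Ker}(\rho) \cong \rho(F)$ is a finite subgroup of $\mathrm{GL}_n(K)$. As every $\mathrm{Ker}(\rho)$ contains $N$, the subgroups $\{\mathrm{Ker}(\rho)/N\}_{\rho \in \Omega}$ are closed normal subgroups of $F/N$ whose intersection is trivial, directly by the definition of $N$.

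The decisive point is a bound on the abelian-simple length of finite linear groups that is uniform in the field: there is a constant $B(n)$, depending only on $n$, with $l(G) \le B(n)$ for every finite subgroup $G$ of $\mathrm{GL}_n(K)$ over an arbitrary field $K$. Granting this, every quotient $(F/N)/(\mathrm{Ker}(\rho)/N) \cong \rho(F)$ has $l \le B(n)$, so \cite[Proposition 2.8]{BFW} (applied exactly as in \corref{IntersectionBoundedIndexCor}) yields $l(F/N) < \infty$; then \thmref{FSLThm} shows that $H$ is free profinite.

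To prove the uniform bound I would use the Larsen--Pink structure theorem for finite linear groups (in characteristic zero this is Jordan's classical theorem): there is a constant $J(n)$ such that every finite $G \le \mathrm{GL}_n(K)$ admits subgroups $G \supseteq G_1 \supseteq G_2 \supseteq G_3$, all normal in $G$, with $[G : G_1] \le J(n)$, with $G_1/G_2$ either trivial or, when $\mathrm{char}\, K = p > 0$, a direct product of finite simple groups of Lie type, with $G_2/G_3$ abelian, and with $G_3$ a finite $p$-group (trivial if $\mathrm{char}\, K = 0$). Since $l$ is subadditive along normal extensions by \cite[Proposition 2.9]{BFW}, since $l(G) \le \log_2 |G|$ for every finite group $G$ by \cite[Lemma 2.2]{BFW}, since $l \le 1$ on finite abelian groups and on finite products of nonabelian finite simple groups, and since a finite $p$-subgroup of $\mathrm{GL}_n(K)$ in characteristic $p$ consists of unipotent matrices and hence (by Kolchin's theorem) is conjugate into the unitriangular group, so has derived length---which equals its $l$ since it is nilpotent---at most $\lceil \log_2 n \rceil$, combining these along the four-term series bounds $l(G)$ by $\log_2 J(n) + 2 + \lceil \log_2 n \rceil =: B(n)$.

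I expect the main obstacle to be essentially bibliographic: correctly citing and applying the Larsen--Pink theorem (equivalently, Jordan's theorem together with a positive-characteristic substitute) so as to obtain the uniform bound. Everything afterwards---passing from that bound to $l(F/N) < \infty$ via \cite[Proposition 2.8]{BFW}, and from there to the freeness of $H$ via \thmref{FSLThm}---is routine and closely parallels the proof of \corref{IntersectionBoundedIndexCor}.
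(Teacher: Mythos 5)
Your proposal is correct and takes essentially the same route as the paper: the paper's proof simply observes that the argument of \cite[Theorem 5.1]{BFW} is group-theoretic and yields $l(F/N)<\infty$, and then applies \thmref{FSLThm}. The uniform bound on the abelian-simple length of finite subgroups of $\mathrm{GL}_n(K)$ that you derive from Larsen--Pink, combined with \cite[Proposition 2.8]{BFW}, is precisely the content of that cited argument, so you have merely unpacked the reference rather than found a different proof.
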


\begin{proof}

The argument given in the proof of \cite[Theorem 5.1]{BFW} is essentially group theoretic, showing that $l(F/N) < \infty$ so we may apply \thmref{FSLThm}. $\blacksquare$

\end{proof}

Next we prove the group theoretic analogue of \mbox{\cite[Theorem 1.2]{BFW}}, but let us first make a remark regarding \mbox{\cite[Lemma 4.3]{BFW}}. For this, recall that the subgroup rank of a topological group $G$ is defined to be: $$ r(G) = \sup \hspace{1mm} \{d(H) : H \leq_c G \}.$$ \cite[Lemma 4.3]{BFW} invokes \cite[Proposition 22.14.4]{FJ} which gives a uniform bound on the number of generators of all closed subgroups of a matrix group over the $p$-adic integers. Therefore, \mbox{\cite[Lemma 4.3]{BFW}} gives us in fact a little more: The subgroup rank of every compact subgroup of a matrix group over a finite extension of a $p$-adic field is finite. 

\begin{theorem} \label{AdicRepsThm}

Let $F$ be a nonabelian free profinite group, fix some $n \in \mathbb{N}$, and for each prime number $l$, let $\sigma_l \colon F \rightarrow \mathrm{GL}_n(\overline{\mathbb{Q}_{l}})$ be a continuous representation. Set $$N = \bigcap_{l} \mathrm{Ker}(\sigma_l).$$ Then any infinite index subgroup $N \leq H \leq_c F$ is a free profinite group of rank $\max\{\mathrm{rank}(F), \aleph_0\}$.

\end{theorem}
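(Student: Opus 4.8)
The plan is to reduce this to \thmref{FSLThm} by showing that $F/N$ has finite abelian-simple length, and then to pin down the rank by combining \thmref{FIThm} (infinite rank) with \thmref{FFSLThm} (finite rank, infinite index). The first task is the heart of the matter: we must bound $l(F/N)$, where $N$ is the intersection of the kernels of \emph{all} the $\sigma_l$ as $l$ ranges over all primes. The key point, as in \cite[Theorem 1.2]{BFW}, is that each image $\sigma_l(F)$ is a compact subgroup of $\mathrm{GL}_n(\overline{\mathbb{Q}_l})$; since $\sigma_l(F)$ is compact it actually lands in $\mathrm{GL}_n(E_l)$ for some finite extension $E_l$ of $\mathbb{Q}_l$ (a compact subgroup is contained in finitely many coordinates' worth of algebraic numbers, or one invokes the standard fact that a compact subgroup of $\mathrm{GL}_n(\overline{\mathbb{Q}_l})$ is conjugate into $\mathrm{GL}_n(\mathcal{O})$ for a suitable ring of integers). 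By the strengthened form of \cite[Lemma 4.3]{BFW} noted in the remark preceding the theorem, the subgroup rank $r(\sigma_l(F))$ is finite; more importantly, there is a uniform bound — coming from \cite[Proposition 22.14.4]{FJ} — on $d(H)$ for every closed subgroup $H$ of $\mathrm{GL}_n$ over a $p$-adic ring, depending only on $n$. This uniformity is what lets us handle infinitely many primes at once.

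Next I would use this to control $l(F/N)$. Each $F/\mathrm{Ker}(\sigma_l) \cong \sigma_l(F)$ is an $l$-adic analytic group (or at least a compact matrix group), and one shows its abelian-simple length is bounded by a constant $c = c(n)$ independent of $l$: a finitely generated profinite group of subgroup rank $\leq r(n)$ has all of its abelian-simple composition data bounded, since each generalized derived quotient is either abelian or a product of nonabelian finite simple groups of bounded rank, and the length of such a series is controlled by the rank. Concretely, the argument in the proof of \cite[Theorem 5.1]{BFW} (cited in \corref{FiniteRepsCor}) is, as the authors remark, "essentially group theoretic," and the same reasoning applies verbatim to each $\sigma_l(F)$, giving $l(F/\mathrm{Ker}(\sigma_l)) \leq c(n)$ for all $l$. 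Then, since $N = \bigcap_l \mathrm{Ker}(\sigma_l)$, we have $\bigcap_l \mathrm{Ker}(\sigma_l)/N = \{1\}$ in $F/N$, and all the quotients $(F/N)/(\mathrm{Ker}(\sigma_l)/N)$ have abelian-simple length $\leq c(n)$; by \cite[Proposition 2.8]{BFW} (the same tool used in \corref{IntersectionBoundedIndexCor}) this forces $l(F/N) \leq c(n) < \infty$.

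With $l(F/N) < \infty$ established, \thmref{FSLThm} immediately gives that every closed $N \leq H \leq_c F$ is free profinite. It remains to identify the rank as $\max\{\mathrm{rank}(F), \aleph_0\}$ under the infinite-index hypothesis. If $\mathrm{rank}(F)$ is infinite, then by \thmref{FIThm} every $N \leq H \leq_c F$ is free of rank $\mathrm{rank}(F) = \max\{\mathrm{rank}(F),\aleph_0\}$, with no index hypothesis even needed. If $\mathrm{rank}(F)$ is finite — here we use that $F$ is nonabelian, so $\mathrm{rank}(F) \geq 2$ — then $F/N$ still has finite abelian-simple length, and \thmref{FFSLThm} applies directly to the infinite-index subgroup $H$, yielding that $H$ is free of rank $\aleph_0 = \max\{\mathrm{rank}(F),\aleph_0\}$. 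In either case the conclusion holds.

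The main obstacle is the uniform bound on $l(F/\mathrm{Ker}(\sigma_l))$ over all primes $l$ simultaneously: one must be careful that the constant coming from \cite[Proposition 22.14.4]{FJ} and the subsequent length estimate genuinely depends only on $n$ and not on $l$ or on the particular finite extension $E_l/\mathbb{Q}_l$ into which $\sigma_l(F)$ embeds. Once that uniformity is in hand, everything else is an assembly of results already proved in the paper.
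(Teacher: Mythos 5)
There is a genuine gap at the heart of your argument: the claim that $l(F/\mathrm{Ker}(\sigma_l)) \leq c(n)$, and hence that $l(F/N) < \infty$, is false. The image $\sigma_l(F)$ is a compact $l$-adic analytic group, and such groups typically have \emph{infinite} abelian-simple length. For example, take $\sigma_l$ with image $\mathrm{SL}_2(\mathbb{Z}_l)$ (which is topologically finitely generated, hence a quotient of $F$ already when $\mathrm{rank}(F)=2$): its first congruence subgroup $K_1$ is pro-$l$, so all of its finite quotients are $l$-groups and its generalized derived series is just its closed derived series; since $K_1$ contains a nonabelian free group it is not solvable of any bounded length, so $D_r(K_1) \neq \{1\}$ for all $r$ and $l(\mathrm{SL}_2(\mathbb{Z}_l)) = \infty$. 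Your appeal to the reasoning of \cite[Theorem 5.1]{BFW} (i.e.\ \corref{FiniteRepsCor}) does not transfer: there the target $\mathrm{GL}_n(K)$ carries the \emph{discrete} topology, so the images are finite and Jordan-type bounds apply; here the images are infinite compact matrix groups. Likewise, finite subgroup rank does not bound abelian-simple length ($\mathrm{SL}_2(\mathbb{Z}_l)$ again is a counterexample). Consequently the reduction of the whole theorem to \thmref{FSLThm} cannot work.

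This is precisely why the paper's proof is structured differently. Following \cite[Theorem 1.2]{BFW}, one chooses for each $l$ a normal subgroup $N_l \leq K_l \lhd_c F$ with $l(F/K_l) \leq m(n)$ and $K_l/N_l$ pro-$l$: the finite-length part is only $F/K_l$, while the pro-$l$ part $K_l/N_l$ is handled by entirely different means. Setting $K = \bigcap_l K_l$ and $G_l = K \cap \bigcap_{p\neq l} N_p$, a divisibility argument on indices shows $K \cap N_q = \prod_{l \neq q} G_l$ for the one "bad" prime $q$, and repeated use of the diamond theorem reduces to either $K \leq H$ (where $l(F/K) \leq m(n)$ by \cite[Proposition 2.8]{BFW}, so \thmref{FFSLThm} or \thmref{FIThm} applies) or $N_q \leq H$ (where one uses the finiteness of the subgroup rank of $F/N_q$ and \lemref{FinSubRankLem}, which rests on semi-free groups rather than on \thmref{FSLThm}). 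Your concluding paragraph on identifying the rank in the two cases is fine, but the missing diamond-theorem and subgroup-rank machinery is essential, not an optional refinement.
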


\begin{proof}

Let $l$ be a prime number, and set $N_l = \mathrm{Ker}(\sigma_l)$. Using \newline \cite[Corollary 4.6]{BFW}, and \cite[Lemmas 4.1-4.3]{BFW}, it is shown in the third and fourth paragraphs of the proof of \cite[Theorem 1.2]{BFW} that we can pick some $N_l \leq K_l \lhd_c F$ such that:

\begin{enumerate}

\item The abelian-simple length of $F/K_l$ is at most $m = m(n)$.

\item The group $K_l/N_l$ is pro-$l$.

\item The subgroup rank of $F/N_l$ is finite.

\end{enumerate} with the last condition coming from our remark (preceding the theorem) strengthening \cite[Lemma 4.3]{BFW}. Define the following normal subgroups of $F$ containing $N\colon$ $$K = \bigcap_{l} K_l, \hspace{1mm} K^p = K \cap N_p, \hspace{1mm} A_l = \bigcap_{p \neq l} N_p, \hspace{1mm} G_l = K \cap A_l, \hspace{1mm} G^q = \prod_{l \neq q} G_l $$ and note that if $l \neq q$, then $A_l \leq N_q$ so $G_l \leq K \cap N_q$, and we infer that $G^q \leq K \cap N_q = K^q$. For any two distinct primes $r,s$ it is easy to see that $G_r \cap G_s = N \leq H$, so if $G_r, G_s \nleq H$ we may invoke the diamond theorem to conclude that $H$ is a free profinite group.

Thus, we may assume that there is a prime number $q$, such that $G_l \leq H$ for any $l \neq q$ (or in other words, $G^q \leq H$). Note that for any prime $l$ we have an injection $$ K/G_l \rightarrow \prod_{p \neq l} K/(K \cap N_p) \cong \prod_{p \neq l} KN_p/N_p $$ into a product of groups of orders not divisible by $l$ since $$N_p \leq KN_p \leq K_p$$ and $K_p/N_p$ is pro-$p$. As a result, $[K : G_l]$ is not divisible by $l$, so since $[K: G^q]$ divides $[K : G_l]$ (apply \cite[Proposition 2.3.2 (d)]{RZ} to $G_l \leq_c G^q \leq_c K$) for every $l \neq q$, it follows that $[K : G^q]$ is not divisible by any prime other than $q$. Since $G^q \leq K^q \leq K$, $[K^q : G^q]$ and $[K : K^q]$ are powers of $q$ as well. It is therefore apparent that $K = G_qK^q$ as $[K : G_qK^q]$ divides both $[K : G_q]$ and $[K : K^q]$ which are coprime.

It is clear that $N = G_q \cap K^q$, so $K/N \cong G_q/N \times K^q/N$, and as a result $K^q/N \cong K/G_q$, showing us that $q$ does not divide $[K^q : N]$. Consequently, $[K^q : G^q]$ is not divisible by $q$. We conclude that $$K^q = G^q$$ since we have shown that there is no prime dividing $[K^q : G^q]$.   

We have seen that $K \cap N_q = K^q = G^q \leq H$, so by the diamond theorem, we may assume that $H$ contains either $K$ or $N_q$. If $K \leq H$, then by \cite[Proposition 2.8]{BFW}, $l(F/K) = \sup_{p}l(F/K_p) \leq m$ so we invoke either \thmref{FFSLThm} or \thmref{FIThm}. On the other hand, if $N_q \leq H$ we may appeal to the next lemma (\lemref{FinSubRankLem}) since the subgroup rank of $F/N_q$ is finite. $\blacksquare$

\end{proof}

In the following lemma we complete the proof of \thmref{AdicRepsThm}. The proof uses the notions of semi-free and projective profinite groups defined in \cite{BHH} and in \cite[Section 7.6]{RZ} respectively.

\begin{lemma} \label{FinSubRankLem}

Let $F$ be a free profinite group of rank $m \geq 2$, and let $N \lhd_c F$ such that the subgroup rank of $F/N$ is finite. Then every infinite index subgroup $N \leq H \leq_c F$ is a free profinite group of rank $\max\{m,\aleph_0\}$.

\end{lemma}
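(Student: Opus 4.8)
The plan is to show that $H$ is projective and semi-free of rank $\mu := \max\{m,\aleph_0\}$, and then to conclude that $H$ is free of rank $\mu$ via the characterization (from \cite{BHH}, together with the notion of projectivity in \cite[Section 7.6]{RZ}) of free profinite groups as exactly the projective semi-free ones. Projectivity is automatic, since $H$ is a closed subgroup of the free, hence projective, group $F$.

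First I would reduce to the case that $m$ is infinite. If $m < \aleph_0$, then $F$ is a free profinite group of finite rank exceeding $1$, so by \thmref{RouteThm} there is a free profinite subgroup $H \le L \le_c F$ of rank $\aleph_0$; if $[L:H] < \infty$ then $H$, being of finite index in $L$, is free of rank $\aleph_0 = \mu$ by \cite[Theorem 3.6.2]{RZ}, and otherwise I would replace $F$ by $L$ --- legitimate because $N \lhd_c L$, because $L/N$ is a closed subgroup of $F/N$ and hence still of finite subgroup rank, and because $[L:H] = \infty$. So from now on assume $m \ge \aleph_0$, that is, $\mu = m$, and we must produce the structure showing $H$ is free of rank $m$.

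Next I would pass from $N$ to a descending approximation by normal subgroups with finite abelian--simple length quotient. Writing $R = F/N$, one checks that the finiteness of the subgroup rank of $R$ forces $\bigcap_i D_i(R) = \{1\}$; this is where the structure theory of finite-rank profinite groups enters (such groups are virtually pronilpotent, and their $p$-adic analytic Sylow pieces have generalized derived series --- which there agree with the ordinary derived series --- meeting in the trivial group). Pulling back along $F \to R$, we obtain closed normal subgroups $N \le \cdots \le M_2 \le M_1$ of $F$ with $F/M_i$ of finite abelian--simple length and $\bigcap_i M_i = N$; a short compactness argument then gives $\bigcap_i M_iH = H$. Each $M_iH$ lies between $M_i$ and $F$ with $F/M_i$ of finite abelian--simple length, so $M_iH$ is free of rank $m$ by \thmref{FIThm}. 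If the chain $\{M_iH\}$ is eventually constant, say $M_jH = M_{j+1}H = \cdots$, then $H = M_jH$ and $N \le M_j \le H \le_c F$, so \thmref{FIThm} already gives that $H$ is free of rank $m$; otherwise, passing to a subsequence, we arrive at a strictly descending chain of free profinite groups $G_1 \supsetneq G_2 \supsetneq \cdots$ of rank $m$ with $\bigcap_i G_i = H$, whence $[G_i : H] = \infty$ for every $i$.

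It remains to show that such an $H$ is semi-free of rank $m$. Given a finite split embedding problem for $H$ with epimorphism $\varphi \colon H \to A$ and nontrivial kernel, I would extend $\varphi$ to $\bar\varphi \colon W \to A$ on an open subgroup $H \le W \le_o G_i$ (by \cite[Lemma 1.2.5]{FJ}); then $W$ is an open subgroup of the free group $G_i$ of rank $m$, hence free of rank $m$ and so semi-free of rank $m$, so the embedding problem has $m$ independent proper solutions over $W$, while $[W:H] = \infty$. The crux --- and the step I expect to be the main obstacle --- is to push these solutions back down from $W$ to $H$ while keeping them proper and independent, and it is here that the finiteness of the subgroup rank of $R$ is indispensable. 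The mechanism is that $\ker\varphi = H \cap \ker\bar\varphi$ contains $N \cap \ker\bar\varphi$, whose position inside $\ker\bar\varphi$ is governed by a subgroup of the finite-rank group $R$, so that after choosing the solutions over $W$ compatibly --- in the spirit of the Gaschütz-lemma argument of \propref{GaschProp} and the rank count of \thmref{FFSLThm} --- enough of them restrict to proper, independent solutions on $H$; letting $i$ range over the strictly descending chain $\{G_i\}$, as in Part B of the proof of \thmref{RouteThm}, then furnishes the required $m$ independent proper solutions for $H$. Combined with projectivity, this gives that $H$ is free of rank $m = \mu$.
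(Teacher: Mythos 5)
Your reduction of the finite-rank case to the infinite-rank case via \thmref{RouteThm} is a legitimate alternative to the paper's treatment (which instead produces, using the finite subgroup rank of $F/N$ and the growth estimate of \cite[Theorem 3.6.2(b)]{RZ}, some $H \le U \le_o F$ with $d(U) > d(U/N)$ and then invokes \cite[Lemma 1.2]{Ja06}); but your route makes everything ride on the infinite-rank case, and that is where the argument breaks down.

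For $m$ infinite you try to show $H$ is semi-free of rank $m$ from scratch, by building a strictly descending chain of free subgroups $G_i = M_iH$ with $\bigcap_i G_i = H$, lifting a split embedding problem to some $H \le W \le_o G_i$, producing $m$ independent proper solutions over $W$, and then ``pushing them back down'' to $H$. This last step --- which you yourself flag as the crux --- is not carried out: the sketch about $N \cap \ker\bar\varphi$ being ``governed by a subgroup of the finite-rank group $R$'' and the appeal to \propref{GaschProp} and \thmref{FFSLThm} does not say how to guarantee that the restrictions to $H$ remain surjective, let alone independent. Making this work is essentially equivalent to reproving the permanence criterion for semi-freeness from \cite{BHH}, and without a real argument the gap is genuine. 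There is also no reason to introduce the descending chain $\{M_i\}$: the claim $\bigcap_i D_i(R) = \{1\}$ holds for every profinite group and does not use finite subgroup rank, so that machinery is not extracting the hypothesis you need.

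The paper's proof of the infinite-rank case uses the hypothesis at exactly one point, and much more economically than you do: finite subgroup rank of $F/N$ forces $F/N$ to be finitely generated; since $N \lhd_c F$ and $N \le H$ imply $N \le H_F$, the group $F/H_F$ is a continuous image of $F/N$ and hence also finitely generated; this is precisely the hypothesis of \cite[Main Theorem II]{BHH}, which then yields directly that $H$ is semi-free of rank $m$, after which projectivity (\cite[Lemma 7.6.3(b)]{RZ}) and \cite[Theorem 3.6]{BHH} give freeness. You correctly identified the ``projective $+$ semi-free $\Rightarrow$ free'' endgame, but missed that \cite{BHH} already supplies the permanence criterion for semi-freeness that you were trying to reconstruct by hand.
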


\begin{proof}

First, assume that $m$ is infinite. Since the subgroup rank of $F/N$ is finite, it follows that $F/N$ is finitely generated and so is every continuous image of it, so in particular, $F/H_F$ is finitely generated. Hence, by \cite[Main Theorem II]{BHH}, $H$ is a semi-free profinite group of rank $m$. From \cite[Lemma 7.6.3 (b)]{RZ} (applied to the variety of all finite groups), we infer that $H$ is a projective group, and thus a free profinite group of rank $m$ in view of \cite[Theorem 3.6]{BHH}.

Now suppose that $m < \infty$. Since $[F : H] = \infty$, there are subgroups $H \leq V \leq_o F$ of arbitrarily high index (see \cite[Proposition 2.3.2 (b)]{RZ}). Since the subgroup rank of $F/N$ is finite, there is some $H \leq U \leq_o F$ with $d(U) > d(U/N)$ (since $m \geq 2$, the number of generators of open subgroups grows with the index as shown in \cite[Theorem 3.6.2 (b)]{RZ}). The profinite freeness of $H$ is now a consequence of \mbox{\cite[Lemma 1.2]{Ja06}}. $\blacksquare$ 

\end{proof}

At last, we give some applications of arithmetic flavor. Let us begin by fixing notation, and recalling some terminology. 

\begin{definition} \label{Hildef}

Let $L$ be a field, and denote by $\overline{L}$ its algebraic closure. We say that:

\begin{itemize}

\item $L$ is PAC (Pseudo Algebraically Closed) if for every \mbox{$f(X,Y) \in L[X,Y]$} which is irreducible in $\overline{L}[X,Y]$, there are $x_0,y_0 \in L$ with \mbox{$f(x_0,y_0) = 0$}.

\item $L$ is Hilbertian if for every irreducible polynomial \mbox{$f(X,Y) \in L[X,Y]$} separable in $Y$, there is some $x_0 \in L$ such that \mbox{$f(x_0,Y) \in L[Y]$} is irreducible.

\item $L$ is $\omega$-free if every finite embedding problem for $G_L$ is properly solvable.

\end{itemize}

\end{definition}

For an abelian variety $A/L$ ($A$ is defined over $L$), we have the following notation:

\begin{itemize}

\item For $k \in \mathbb{N}$, $A[k]$ denotes the set of points of $A$ whose order divides $k$, and $A_{\mathrm{tor}} = \bigcup_{k \in \mathbb{N}} A[k]$.

\item For a prime number $l$ set: $$A[l^{\infty}] = \bigcup_{n = 1}^{\infty} A[l^n], \hspace{1mm} T_l(A) = \varprojlim_{n \in \mathbb{N}} A[l^n].$$

\item For a subset $S \subseteq A$, denote by $L(S)$ the subfield of $\overline{L}$ generated over $L$ by the coordinates of points in $S$.

\end{itemize}  

Fix some integer $m \geq 2$. By \cite[Corollary 23.1.2]{FJ}, there exists a perfect PAC field $L$ whose absolute Galois group $G_L$ is a free profinite group of rank $m$, as this group is projective. Furthermore, for almost every $\sigma_1, \dots, \sigma_m \in \mathrm{Gal}(\mathbb{Q})^m$ (with respect to the Haar measure on $\mathrm{Gal}(\mathbb{Q})^m$) the subfield of $\overline{\mathbb{Q}}$ fixed by $\sigma_1, \dots, \sigma_m$ is a perfect PAC field with absolute Galois group free profinite of rank $m$ (see \mbox{\cite[Chapter 20]{FJ}).} The following corollary establishes \thmref{AlmostThm}.

\begin{corollary} \label{AbVarCor}

Let $L$ be a perfect PAC field with a nonabelian finitely generated free profinite absolute Galois group. Let $A/L$ be an abelian variety, and let $K/L$ be an infinite extension contained in $L(A_{\mathrm{tor}})$. Then $K$ is Hilbertian. 

\end{corollary}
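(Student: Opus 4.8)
The plan is to deduce the Hilbertianity of $K$ from the profinite freeness of the corresponding closed subgroup of $G_L$, using the twinning-type philosophy but in fact just the direct translation via Galois theory. First I would set $H = \mathrm{Gal}(\overline{L}/K) \leq_c G_L$, so that the hypothesis ``$K/L$ is an infinite extension'' becomes ``$H$ is of infinite index in $G_L$''. The key point is to identify a closed normal subgroup $N \lhd_c G_L$ contained in $H$ such that $G_L/N$ has finite abelian-simple length; the natural candidate is $N = \mathrm{Gal}(\overline{L}/L(A_{\mathrm{tor}}))$, which is normal in $G_L$ since $L(A_{\mathrm{tor}})/L$ is Galois, and which is contained in $H$ because $K \subseteq L(A_{\mathrm{tor}})$. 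Then $G_L/N \cong \mathrm{Gal}(L(A_{\mathrm{tor}})/L)$, and I must show this group has finite abelian-simple length.

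For the length bound I would invoke the structure of the image of the Galois representation on torsion points. Decomposing $A_{\mathrm{tor}} = \bigoplus_l A[l^\infty]$, the group $\mathrm{Gal}(L(A_{\mathrm{tor}})/L)$ embeds into $\prod_l \mathrm{Gal}(L(A[l^\infty])/L)$, and each factor is a compact subgroup of $\mathrm{GL}_{2g}(\mathbb{Z}_l)$ (where $g = \dim A$), hence a compact subgroup of a matrix group over a $p$-adic field. This is precisely the situation governed by the analysis behind \cite[Theorem 1.2]{BFW} / \thmref{AdicRepsThm}: each $\mathrm{Gal}(L(A[l^\infty])/L)$ has abelian-simple length bounded by some $m = m(2g)$ independent of $l$, by the same argument (using \cite[Corollary 4.6]{BFW} and \cite[Lemmas 4.1--4.3]{BFW}) that produces the subgroups $K_l$ in the proof of \thmref{AdicRepsThm}. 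Then, since the intersection over $l$ of the kernels of $G_L \to \mathrm{Gal}(L(A[l^\infty])/L)$ is exactly $N$, \cite[Proposition 2.8]{BFW} gives $l(G_L/N) = \sup_l l(\mathrm{Gal}(L(A[l^\infty])/L)) \leq m < \infty$.

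With $N \lhd_c G_L$, $N \leq H \leq_c G_L$, and $l(G_L/N) < \infty$ in hand, \thmref{FSLThm} applies directly: $H$ is a free profinite group. (Here I should note $G_L$ is nonabelian free by hypothesis, so the theorem is applicable.) Since $L$ is perfect and PAC, so is $K$ (a PAC field has PAC algebraic extensions, and perfectness passes to algebraic extensions), and $G_K = H$ is free profinite; moreover $K$ is an infinite extension of $L$ whose Galois group is free profinite, so $\mathrm{rank}(G_K)$ is infinite by \thmref{FSLThm} (the finitely generated case of \thmref{FSLThm} forces rank $\aleph_0$, and the infinite-rank case rank $r$) --- in either case $G_K$ has infinite rank. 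A perfect PAC field with $\omega$-free (in particular, free of infinite rank) absolute Galois group is Hilbertian, by the classical theorem of Roquette (see \cite[Corollary 27.3.3]{FJ}, that PAC $+$ $\omega$-free $\Rightarrow$ Hilbertian). Hence $K$ is Hilbertian.

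I expect the main obstacle to be the length bound on $\mathrm{Gal}(L(A_{\mathrm{tor}})/L)$: one must be careful that the bound $m(2g)$ coming from \cite{BFW} is genuinely uniform in the prime $l$, and that the passage from the representations $\sigma_l$ on $T_l(A)$ (whose images may not be all of $\mathrm{GL}_{2g}(\mathbb{Z}_l)$, and over which we have little control when $L$ is an arbitrary PAC field) to the abelian-simple length goes through exactly as in the proof of \thmref{AdicRepsThm}; the cleanest route is to quote \thmref{AdicRepsThm} itself, applied with the family of $l$-adic representations $\sigma_l \colon G_L \to \mathrm{GL}_{2g}(\overline{\mathbb{Q}_l})$ attached to $A$, noting that $\bigcap_l \mathrm{Ker}(\sigma_l) = N$ and that $H$ has infinite index, so that \thmref{AdicRepsThm} directly yields that $H = G_K$ is free profinite of rank $\max\{\mathrm{rank}(G_L),\aleph_0\} = \aleph_0$; then Hilbertianity of $K$ follows from PAC $+$ infinite free absolute Galois group as above.
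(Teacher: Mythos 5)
Your ``cleanest route'' is exactly the paper's proof: realize $N = \bigcap_l \mathrm{Ker}(\sigma_l) = \mathrm{Gal}(\overline{L}/L(A_{\mathrm{tor}}))$ via the $l$-adic Tate module representations, note $N \leq G_K \leq_c G_L$ with $G_K$ of infinite index, invoke \thmref{AdicRepsThm} to get $G_K$ free profinite of rank $\aleph_0$, observe $K$ is PAC and hence $\omega$-free, and conclude by \cite[Corollary 27.3.3]{FJ}. One caution on the route you sketch first and then abandon: it is in fact unsalvageable, not merely inconvenient. The bound $m(2g)$ from \cite{BFW} controls $l(F/K_l)$, not $l(F/N_l)$; the quotient $G_L/N_l \cong \mathrm{Gal}(L(A[l^\infty])/L)$ is a compact subgroup of $\mathrm{GL}_{2g}(\mathbb{Z}_l)$ whose pro-$l$ part (e.g.\ an open congruence kernel) can have unbounded derived length, so $l(G_L/N)$ is in general infinite and \thmref{FSLThm} does not apply with this $N$. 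This is precisely why the proof of \thmref{AdicRepsThm} needs the diamond theorem and the separate treatment of the pro-$l$ and finite-subgroup-rank pieces, rather than a single uniform length bound. Since you ultimately quote \thmref{AdicRepsThm} wholesale, your final argument is correct.
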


\begin{proof}

We may assume that $d = \text{dim}(A)$ is positive, for otherwise there is nothing to prove. For each prime number $l$ let $$\rho_l \colon G_L \to \text{GL}_{2d}(\mathbb{Q}_l)$$ be the Galois representation associated with the action of the absolute Galois group $G_L$ on the Tate module $T_l(A) \cong \mathbb{Z}_l^{2d}.$ Applying a basic property of inverse limits (\cite[Proposition 1.1.10]{RZ}) to $T_l(A)$, one is able to deduce that $\text{Ker}(\rho_l) = G_{L(A[l^\infty])}$, and thus $$\overline{L}^{\text{Ker}(\rho_l)} = L(A[l^{\infty}]).$$ Since any torsion abelian group is the sum of its $l$-subgroups, we infer that $$L(A_{\text{tor}}) = L(\bigoplus_l A[l^\infty]) = L(\bigcup_l A[l^\infty]) = $$ $$ = \prod_l L(A[l^\infty]) = \prod_l \overline{L}^{\text{Ker}(\rho_l)} = \overline{L}^{\bigcap_l \text{Ker}(\rho_l)}$$ where the second equality stems from the fact that the addition of a finite set of points from $A$ is given by rational functions (defined over $L$) of their coordinates.   

It is clear from the properties of the Galois correspondence that the absolute Galois group $G_K$ is a closed subgroup of infinite index in $G_L$, containing $\cap_l \text{Ker}(\rho_l)$. By \thmref{AdicRepsThm}, $G_K$ is a free profinite group of rank $\aleph_0$, and $K$ is PAC in view of \cite[Corollary 11.2.5]{FJ}. Hence, by \cite[Corollary 3.5.10]{RZ}, $K$ is $\omega$-free. We arrive at our result by applying \cite[Corollary 27.3.3]{FJ}. $\blacksquare$

\end{proof}

Our final application is a consequence of the intermediate subgroup theorem.

\begin{corollary} \label{IntSubCor}

Let $L$ be a perfect PAC field with a nonabelian finitely generated free profinite absolute Galois group. Let $K/L$ be an infinite extension. Then there exists a Hilbertian field $L \leq M \leq K$.

\end{corollary}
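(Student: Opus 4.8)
The plan is to mimic the proof of \corref{AbVarCor} but replace the appeal to \thmref{AdicRepsThm} with an appeal to the intermediate subgroup theorem \thmref{FreeInterSubThm} (equivalently \thmref{RouteThm}), which is exactly why this is flagged as an application of that theorem. First I would pass to absolute Galois groups: write $F = G_L$, which by hypothesis is a nonabelian finitely generated free profinite group (of some finite rank $m \geq 2$), and write $H = G_K \leq_c F$. Since $K/L$ is an infinite extension, $[F : H] = [K : L] = \infty$, so $H$ is a closed subgroup of infinite index in a nonabelian finitely generated free profinite group.

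Now I would invoke \thmref{FreeInterSubThm} (i.e. \thmref{RouteThm}): there is a closed subgroup $H \leq L' \leq_c F$ which is a free profinite group of rank $\aleph_0$ — here I must be careful to rename the intermediate group, say $L'$ or $P$, to avoid clashing with the base field $L$. Let $M = \overline{L}^{L'}$ be the fixed field of $L'$; by the Galois correspondence this is a field with $L \leq M \leq K$ and $G_M \cong L'$ is free profinite of rank $\aleph_0$. Next, $M$ is PAC: it is an algebraic extension of the PAC field $L$, so by \cite[Corollary 11.2.5]{FJ} (the same citation used in \corref{AbVarCor}) $M$ is PAC. Then, since $G_M$ is free profinite of infinite rank, every finite embedding problem for $G_M$ is properly solvable, so $M$ is $\omega$-free by \cite[Corollary 3.5.10]{RZ}. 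Finally, a PAC field that is $\omega$-free is Hilbertian by \cite[Corollary 27.3.3]{FJ}, so $M$ is the desired Hilbertian field with $L \leq M \leq K$.

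There is essentially no serious obstacle here — the corollary is a routine packaging of \thmref{RouteThm} through the standard PAC/$\omega$-free/Hilbertian chain, and all the hard work lives in the proof of \thmref{RouteThm}. The only genuine point requiring care is making sure the hypotheses of \thmref{RouteThm} are met, namely that $F$ has finite rank exceeding $1$ (guaranteed by "nonabelian finitely generated free", since a finitely generated free profinite group of rank $\leq 1$ is procyclic, hence abelian) and that $H$ has infinite index (guaranteed because $K/L$ is infinite and the degree of an algebraic extension equals the index of the corresponding Galois subgroup). The one notational hazard to watch is the collision between the field $L$ of the hypothesis and the intermediate subgroup named $L$ in the statement of \thmref{RouteThm}, so in writing the proof I would systematically rename the latter.
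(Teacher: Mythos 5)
Your proposal is correct and follows exactly the same route as the paper: apply \thmref{RouteThm} to $G_K \leq_c G_L$ to get an intermediate free profinite subgroup of rank $\aleph_0$, take its fixed field $M$, and conclude via the same chain of citations (PAC by \cite[Corollary 11.2.5]{FJ}, $\omega$-free by \cite[Corollary 3.5.10]{RZ}, Hilbertian by \cite[Corollary 27.3.3]{FJ}). The only difference is your (sensible) renaming of the intermediate subgroup to avoid the clash with the base field $L$.
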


\begin{proof}

Clearly, $G_K$ is a closed subgroup of infinite index in $G_L$ so by \thmref{RouteThm}, there is a free profinite subgroup $G_K \leq H \leq_c G_L$ of rank $\aleph_0$. Set $M = \overline{L}^H$, and note that $M$ is $\omega$-free by \mbox{\cite[Corollary 3.5.10]{RZ}}, and PAC by \cite[Corollary 11.2.5]{FJ}. The Hilbertianity of $M$ follows from \cite[Corollary 27.3.3]{FJ}. $\blacksquare$

\end{proof}

\end{document}